\newcommand{\PP}{\mathrm{P}}
\newcommand{\HH}{\mathrm{H}}
\newcommand{\OO}{\mathcal{O}}
\newcommand{\QQ}{\mathcal{Q}}
\newcommand{\R}{\mathrm{R}}
\newcommand{\Gr}{\mathrm{Gr}}
\newcommand{\Sch}{\mathbf{Sch}}
\newcommand{\Pic}{\mathrm{Pic}}
\newtheorem{theorem}[subsubsection]{Théorème}
\newtheorem{prop}[subsubsection]{Proposition}
\newtheorem{rem}[subsubsection]{Remarque}
\newtheorem{lemma}[subsubsection]{Lemme}
\newtheorem{corollary}[subsubsection]{Corollaire}
\newtheorem{exam}[subsubsection]{Exemple}
\newtheorem{definition}[subsubsection]{Définition}
\newtheorem{theoreme}{Théorème}[subsection]
\newtheorem{remarques}[theoreme]{Remarque}
\newenvironment{remark}{\begin{rem} \upshape}{\end{rem}}
\begin{document}

\title{Etude locale des torseurs sous une courbe elliptique}
\author{Jilong Tong}
\email{jilong.tong@uni-due.de}
\address{Universität Duisburg-Essen, Fachbereich Mathematik, Campus Essen 45117 Essen, Germany}



\classification{14H99.} \keywords{torseur sous une courbe
elliptique, fonctions de Herbrand, foncteur de Picard, réalisation
de Greenberg}

\begin{abstract}This article concerns the geometry of torsors
under an elliptic curve. Let $\OO_K$ be a complete discrete
valuation ring with algebraically closed residue field and
function field $K$. Let $\pi$ be a generator of the maximal ideal
of $\OO_K$, and $S=\mathrm{Spec}(\OO_K)$. Suppose that we are
given $J_K$ an elliptic curve over $K$, with $J$ the connected
component of the $S$-Néron model of $J_K$. Given $X_K/K$ a torsor
of order $d$ under $J_K$, let $X$ be the $S$-minimal regular
proper model. Then there is an invertible idéal
$\mathcal{I}\subset \OO_K$ such that
$\mathcal{I}^{d}=\pi\OO_X\subset \OO_X$. Moreover, there exists a
canonical morphism $q:\Pic^{\circ}_{X/S}\rightarrow J$ which
induces a surjective map $q(S):\Pic^{\circ}(X)\rightarrow J(S)$.
The purpose of the article is to prove this last morphism $q(S)$
is compatible with respect to the $\mathcal{I}$-adic filtration on
$\Pic^{\circ}(X)$, and the $\pi$-adic filtration on $J(S)$. As a
byproduct, we obtain {\textquotedblleft Herbrand
functions\textquotedblright}, similar to those Serre used in his
description of local class fields (\cite{Serre}).

\end{abstract}

\maketitle


\section*{Introduction} Soit $\OO_K$ un anneau de valuation
discrète complet, à corps résiduel $k$ algébriquement clos de
caractéristique $p>0$, à corps des fractions $K$, et soit $\pi\in
\OO_K$ une uniformisante de $\OO_K$. Notons
$S=\mathrm{Spec}(\OO_K)$, avec $s$ le point fermé. Soit $J_{K}$
une courbe elliptique sur $K$, et soit $\mathcal{N}$ son
$S$-modèle de Néron, $J=\mathcal{N}^{\circ}$ sa composante neutre.
Donnons-nous par ailleurs un torseur $X_{K}$ sous $J_{K}$ d'ordre
$d$, et soit $X$ le $S$-modèle propre minimal régulier de $X_{K}$.
En général, $X$ n'est pas cohomologiquement plat (en degré $0$)
sur $S$ (\emph{i.e.}, le morphisme canonique $k\rightarrow
\HH^{0}(X_{s},\mathcal{O}_s)$ n'est pas un isomorphisme), en
particulier, son foncteur de Picard $\Pic^{\circ}_{X/S}$ n'est pas
représentable, même par un espace algébrique. On montre dans
\cite{Raynaud} qu'il existe un épimorphisme (pour la topologie
fppf) de foncteurs en groupes naturel
$q:\Pic^{\circ}_{X/S}\rightarrow J$ qui prolonge l'isomorphisme de
bidualité sur la fibre générique. De plus, le pgcd des
multiplicités des composantes irréductibles de $X_s$ est $d$
(\ref{d1=d2=d}), il existe donc un faisceau inversible d'idéaux
$\mathcal{I}$ de $\OO_X$ tel que $\mathcal{I}^{d}=\pi\OO_X\subset
\OO_X$. Le but de cet article est d'étudier les faisceaux
inversibles sur $X$ en relation avec la filtration
$\mathcal{I}$-adique, et ensuite de montrer que le morphisme $q$
ci-dessus est compatible avec la filtration $\mathcal{I}$-adique
sur $\Pic^{\circ}_{X/S}(S)$, et la filtration $\pi$-adique sur
$J(S)$. Tout ceci se dit agréablement sur les réalisations de
Greenberg de $\Pic^{\circ}_{X/S}$ et $J$. Cette étude conduit
aussi à des fonctions de Herbrand, analogues à celles rencontrées
par Serre (\cite{Serre}) dans la description du corps de classes
local.

\section{Rappels sur le foncteur de Picard et le foncteur de Greenberg.}\label{recall on Picard}


On rassemble ici des résultats bien connus concernant le foncteur
de Picard. De manière générale, pour $X$ un schéma, on note
$(\Sch/X)$ la catégorie des $X$-schémas.

\subsection{Rappels sur le foncteur de Picard.}\label{Rappels Pic}

\subsubsection{}\label{assumption de depart} Soit $f:X\rightarrow S$ un
morphisme propre, notons
$$
\mathrm{Pic}_{X/S}: (\Sch/S)\rightarrow \mathfrak{Ab}
$$
le foncteur de Picard de $X/S$, c'est-à-dire, le faisceau fppf
associé au préfaiseau $S'\mapsto \mathrm{Pic}(X\times_S S')$.
C'est aussi le faisceau étale associé au préfaisceau $S'\mapsto
\Pic(X\times_S S')$ (\cite{Raynaud} 1.2).

\subsubsection{} \label{plat}Supposons jusqu'à la fin de la section
$\S$\ref{Rappels Pic} que $f$ est propre et plat. En général, le
foncteur $\mathrm{Pic}_{X/S}$ n'est pas représentable (même par un
espace algébrique), et il est représentable par un $S$-espace
algébrique si et seulement si $X/S$ est \emph{cohomologiquement
plat (en degré $0$)}, i.e., si la formation de
$f_{\ast}\mathcal{\mathcal{O}}_X$ commute aux changements de base
quelconques $S'\rightarrow S$. Bien que le foncteur
$\mathrm{Pic}_{X/S}$ n'est pas représentable, il possède une jolie
présentation par des $S$-espaces algébriques. Pour cela, il nous
faut d'abord rappeler la notion de rigidificateur.

\begin{definition}[\cite{BLR} 8.1/5] \label{rigidificateur}Soit
$i:Y\hookrightarrow X$ un $S$-sous schéma fermé avec $Y$ fini plat
sur $S$. On dit que $(Y,i)$ est un \emph{rigidificateur} de
$\mathrm{Pic}_{X/S}$ si la condition suivante est remplie: pour
tout $S$-schéma $S'$, si $i':Y'\rightarrow X'$ désigne le
morphisme déduit de $i$ par le changement de base $S'\rightarrow
S$, l'application
$$
\Gamma(i'):\Gamma(X',\mathcal{O}_{X'})\rightarrow
\Gamma(Y',\mathcal{O}_{Y'})
$$
est injective.
\end{definition}

\subsubsection{} Sous l'hypothèse de \ref{assumption de depart} et
de \ref{plat}, le faisceau $\mathrm{Pic}_{X/S}$ possède toujours
un rigidificateur (\cite{Raynaud} proposition 2.2.3 (c)).
Supposons donné $(Y,i)$ un rigidificateur de $\mathrm{Pic}_{X/S}$,
et pour tout $S$-schéma $S'$, on appelle un \emph{faisceau
inversible sur $X'=X\times_S S'$, rigidifié le long du
rigidificateur $Y'$}, un couple $(\mathcal{L}, \alpha)$, où
$\mathcal{\mathcal{L}}$ est un faiscau inversible sur $X'$, et
$\alpha: \mathcal{O}_{Y'}\simeq i'^{\ast}\mathcal{L}$ est un
isomorphisme (autrement dit, $\alpha$ est une trivialisation de
$i'^{\ast}\mathcal{L}$). Un isomorphisme entre deux faisceaux
inversibles rigidifiés $(\mathcal{L},\alpha)$ et
$(\mathcal{M},\beta)$ sur $X'$ est la donnée d'un isomorphisme de
$\mathcal{O}_{X'}$-modules $u:\mathcal{L}\rightarrow \mathcal{M}$
tel que le diagramme suivant soit commutatif:
$$
\xymatrix{i'^{\ast}\mathcal{L}\ar[rr]^{i'^{\ast}u} & &
i'^{\ast}\mathcal{M} \\ &
\mathcal{O}_{Y'}\ar[lu]^{\alpha}\ar[ru]_{\beta} & }.
$$

\subsubsection{}\label{def de Q} On note $(\mathrm{Pic}_{X/S},Y)(S')$ l'ensemble
des classes d'isomorphisme de faisceaux inversibles sur $X'$,
rigidifiés le long de $Y'$. Pour $S'$ variable dans la catégorie
des $S$-schémas $\mathbf{Sch}/S$, l'application $S'\mapsto
(\mathrm{Pic}_{X/S},Y)(S')$ définit un foncteur en groupes
commutatifs $(\mathrm{Pic}_{X/S},Y)$, appelé le \emph{le foncteur
de Picard de $X/S$ relatif au rigidificateur $Y$}. Concernant la
représentabilité de $(\mathrm{Pic}_{X/S},Y)$, on a

\begin{theorem}[\cite{Raynaud} théorème 2.3.1 et corollaire 2.3.2] Le foncteur $(\mathrm{Pic}_{X/S},Y)$ est représentable par
un $S$-espace algébrique en groupes, localement de présentation
finie sur $S$. De plus, si $X/S$ est une courbe, le $S$-espace
algébrique $(\Pic_{X/S},Y)$ est lisse sur $S$.
\end{theorem}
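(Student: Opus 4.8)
The plan is to apply M. Artin's criterion for representability by an algebraic space to the group functor $P:=(\mathrm{Pic}_{X/S},Y)$. The decisive role of the rigidificator is that it rigidifies the objects: an automorphism of a rigidified sheaf $(\mathcal{L},\alpha)$ on $X'$ is a unit $u\in\Gamma(X',\mathcal{O}_{X'})^{\ast}$ satisfying $i'^{\ast}u=1$, and the injectivity of $\Gamma(i')$ in Definition \ref{rigidificateur} forces $u=1$. Thus $P$ has \emph{no} nontrivial automorphisms, which is exactly what permits one to land in algebraic spaces rather than in stacks. We already know from \ref{assumption de depart} and \ref{def de Q} that $P$ is an fppf sheaf in groups, so it remains to verify the other axioms of Artin's criterion: local finite presentation, a coherent and effective deformation--obstruction theory, and openness of versality.

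First I would check that $P$ is locally of finite presentation over $S$, i.e. that it commutes with filtered inductive limits of $\mathcal{O}_S$-algebras; this is routine from the finite presentation of $X$ and of $Y$ and the limit behaviour of categories of coherent sheaves. Next I would set up the deformation theory along a square-zero extension $S_0\hookrightarrow S'$ with ideal $\mathcal{J}$: once a rigidified sheaf $(\mathcal{L}_0,\alpha_0)$ over $S_0$ is given, its extensions to $S'$ form a (pseudo-)torsor under $\HH^{1}(X_0,f_0^{\ast}\mathcal{J})$, modified by the rigidification datum along $Y_0$, while the obstruction to its existence lives in $\HH^{2}(X_0,f_0^{\ast}\mathcal{J})$, where $f_0\colon X_0\to S_0$ denotes the restriction of $f$. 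These groups are coherent and compatible with base change, which supplies Schlessinger's conditions together with the constructibility/openness-of-versality requirement. Finally, the effectivity of a formal deformation over a complete local ring is provided by the properness of $f$ through Grothendieck's existence theorem, which algebraizes the compatible system of rigidified sheaves on the infinitesimal neighbourhoods. Artin's theorem then yields that $P$ is representable by an $S$-algebraic space in groups, locally of finite presentation. I expect this effectivity/algebraization step to be the main obstacle, since it is precisely here that properness of $f$ is indispensable.

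For the second assertion I would use the infinitesimal criterion for smoothness: as $P$ is locally of finite presentation, it suffices to show that $P(S')\to P(S_0)$ is surjective for every square-zero extension $S_0\hookrightarrow S'$ with ideal $\mathcal{J}$. Given $(\mathcal{L}_0,\alpha_0)\in P(S_0)$, the obstruction to extending $\mathcal{L}_0$ to an invertible sheaf $\mathcal{L}'$ on $X'=X\times_S S'$ lies in $\HH^{2}(X_0,f_0^{\ast}\mathcal{J})$, which vanishes when $X/S$ is a curve because the fibres are at most one-dimensional and $S_0$ is affine. Hence $\mathcal{L}'$ exists. It then remains to lift the trivialization: since $Y\to S$ is finite and $S'$ is affine, $Y'$ is affine, so the restriction $\mathrm{Pic}(Y')\to\mathrm{Pic}(Y_0)$ is injective and $i'^{\ast}\mathcal{L}'$ is again trivial; as units lift across the square-zero ideal on the affine scheme $Y'$, the trivialization $\alpha_0$ lifts to some $\alpha'$. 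The pair $(\mathcal{L}',\alpha')$ is the desired point of $P(S')$, so $P$ is formally smooth, and being locally of finite presentation it is smooth over $S$.
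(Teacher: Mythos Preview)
The paper does not give its own proof of this statement: it is quoted verbatim from \cite{Raynaud}, th\'eor\`eme~2.3.1 and corollaire~2.3.2, in the section of recollections on the Picard functor, and no argument is supplied beyond the citation. So there is nothing in the present paper to compare your argument against.

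That said, your sketch is a faithful outline of Raynaud's own proof in \cite{Raynaud}: he, too, verifies Artin's axioms, the key point being exactly the one you isolate, namely that the rigidificator kills automorphisms of $(\mathcal{L},\alpha)$ so that the groupoid collapses to a set-valued functor and one lands in algebraic spaces. The deformation--obstruction calculus you describe, with infinitesimal automorphisms, liftings and obstructions governed by $\HH^{i}(X_0,f_0^{\ast}\mathcal{J})$ for $i=0,1,2$ (suitably corrected along $Y$), together with Grothendieck's existence theorem for effectivity, is precisely the input Raynaud feeds into Artin's criterion. Your treatment of smoothness in the curve case is also the standard one: vanishing of $\HH^{2}$ on one-dimensional fibres kills the obstruction, and finiteness of $Y/S$ makes the lifting of the trivialization along $Y$ automatic over an affine base. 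One small point worth making explicit in a full write-up is why $(\Pic_{X/S},Y)$ is already an fppf sheaf before any sheafification: this follows from fppf descent for invertible sheaves together with the absence of automorphisms, and Raynaud states it separately (\cite{Raynaud}~2.1). Apart from that, your plan is correct and matches the source.
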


\subsubsection{}\label{presentation de P} On a un morphisme de faisceaux en groupes
$r:(\mathrm{Pic}_{X/S},Y)\rightarrow \mathrm{Pic}_{X/S}$, qui au
couple $(\mathcal{L},\alpha)$ associe l'image de $\mathcal{L}$
dans $\mathrm{Pic}_{X/S}(S')$. Étale localement, tout élément de
$\mathrm{Pic}_{X/S}(S')$ peut être représenté par un faiseau
inversible sur $X'$ (\ref{assumption de depart}), le morphisme $r$
est donc un épimorphisme pour la topologie étale. Pour étudier son
noyau, notons $V_{X}^{\ast}$ (resp. $V_{Y}^{\ast}$) le faisceau
abélien fppf sur $(\mathbf{Sch}/S)$, donné par $S'\mapsto
\Gamma(X', \OO_{X'})^{\ast}$ (resp. $S'\mapsto
\Gamma(Y',\OO_{Y'})^{\ast})$. Par définition de rigidificateur
(\ref{rigidificateur}), le morphisme naturel
$V_{X}^{\ast}\rightarrow V_{Y}^{\ast}$ est injectif. Posons $u$ le
morphisme défini de la façon suivante (pour $S'$ un $S$-schéma):
$$
u:V_{Y}^{\ast}\rightarrow (\Pic_{X/S},Y), ~~~~a\in
V_{Y}^{\ast}(S')=\Gamma(Y_{S'},\OO_{Y_{S'}}^{\ast})\mapsto
(\OO_{X_{S'}},\alpha_a)\in (\Pic_{X/S},Y)(S')
$$
où $\alpha_{a}:\OO_{Y_{S'}}\rightarrow
\OO_{Y_{S'}}=\OO_{X_{S'}|_{Y_{S'}}}$ est la multiplication par
$a$. Clairement, $\mathrm{im}(u)\subset \ker(r)$. On obtient ainsi
un complexe de faisceux fppf sur $S$:
$$
\xymatrix{0\ar[r] & V_{X}^{\ast}\ar[r] & V_{Y}^{\ast}\ar[r]^{u} &
\left(\mathrm{Pic}_{X/S},Y\right)\ar[r]^{r}&
\mathrm{Pic}_{X/S}\ar[r]& 0},
$$
qui est exact pour la topologie étale (\cite{Raynaud} 2.1.2(b) et
2.4.1). De plus, le morphisme $r$ est formellement lisse au sens
de Grothendieck (\cite{EGAIV} EGA IV, 17.1.1).  Rappelons que les
deux premiers membres du complexe ci-dessus sont représentables
par $S$-schémas, et le schéma $X/S$ est cohomologiquement plat (en
degré $0$) si et seulement si $V_{X}^{\ast}$ est un $S$-schéma
plat, et en fait lisse.




\subsection{Modèle de Néron et foncteurs de Picard.}

\subsubsection{} Soit $f:X\rightarrow S$ une courbe propre et plate
 (à fibres géométriques connexes).
Notons $\mathrm{P}$ (resp. $(\mathrm{P},R)\emph{}$) le
sous-foncteur (ouvert) de $\mathrm{Pic}_{X/S}$ (resp.
$(\mathrm{Pic}_{X/S},R)$) consistant en les faisceaux inversibles
de degré total $0$ (resp. les faisceaux inversibles rigidifiés le
long $R$ de degré total $0$). Alors, $(\mathrm{P},R)$ est un
sous-espace algébrique ouvert de $(\mathrm{Pic}_{X/S},R)$, et
$\mathrm{P}$ (resp. $(\mathrm{P},R)$) est l'adhérence schématique
de $\left(\mathrm{Pic}_{X/S}\right)_{K}^{\circ}$ (resp.
$(\mathrm{Pic}_{X/S},R)_{K}^{\circ}$) dans $\mathrm{Pic}_{X/S}$
(resp. dans $(\mathrm{Pic}_{X/S},R)$). Notons $E$ l'adhérence
schématique de l'élément neutre de $\mathrm{P}_{K}$ dans
$\mathrm{P}$, et définissons $\mathcal{Q}$ comme le quotient fppf
de $\mathrm{P}$ par $E$. C'est le plus grand quotient séparé de
$\mathrm{P}$. Il est représentable par un schéma en groupes séparé
et lisse sur $S$ (\cite{Raynaud} théorème 3.3.1). On désigne par
$q$ le morphisme canonique $\PP\rightarrow \mathcal{Q}$, qui est
donc surjectif pour la topologie fppf.

\begin{theorem}[\cite{LLR} 3.7]\label{Neron} Gardons les
notations ci-dessus, et supposons de plus $X$ régulier, et
$f_{\ast}\mathcal{O}_{X}=\mathcal{O}_S$. Alors, le schéma en
groups $\mathcal{Q}/S$ est le modèle de Néron de
$\mathrm{P}_{K}=\mathrm{Pic}^{\circ}_{X_{K}/K}$.
\end{theorem}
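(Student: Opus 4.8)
The plan is to recognise $\mathcal{Q}$ as a N\'eron model by checking the defining properties over the strictly henselian (indeed complete) discrete valuation ring $\OO_K$. Recall from \cite{BLR} that a smooth separated $S$-group scheme of finite type $\mathcal{G}$, with the prescribed generic fibre, is the N\'eron model of $\mathcal{G}_K$ as soon as the specialisation map $\mathcal{G}(\OO_K)\to \mathcal{G}(K)$ is bijective: since $k$ is algebraically closed we have $\OO_K^{\mathrm{sh}}=\OO_K$, so no unramified base change intervenes, and the full N\'eron mapping property for an arbitrary smooth $T/S$ follows formally from the group structure together with the fact that a smooth morphism admits sections through every rational point of its special fibre. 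I would therefore reduce the theorem to four points: that $\mathcal{Q}$ is smooth and separated, that it is of finite type, that $\mathcal{Q}_K\simeq \Pic^{\circ}_{X_{K}/K}$, and that $\mathcal{Q}(\OO_K)\to \mathcal{Q}(K)$ is bijective.

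The first point is already available: $\mathcal{Q}$ is separated and smooth over $S$ by the cited \cite{Raynaud} th\'eor\`eme 3.3.1. For the generic fibre, $E$ is the schematic closure of the neutral section, so $E_K$ is the trivial subgroup of $\PP_K$ and hence $\mathcal{Q}_K=\PP_K/E_K=\PP_K=\Pic^{\circ}_{X_{K}/K}$. Finite type I would deduce from the fact that $X_s$ has only finitely many irreducible components: this bounds the lattice of vertical divisor classes and forces the component group $\mathcal{Q}_s/\mathcal{Q}_s^{\circ}$ to be finite, while $\mathcal{Q}_s^{\circ}$ is an algebraic $k$-group of finite type, so $\mathcal{Q}$ is quasi-compact. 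Injectivity in the last point is immediate from the valuative criterion applied to the separated scheme $\mathcal{Q}$.

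The heart of the matter is the surjectivity of $\mathcal{Q}(\OO_K)\to \mathcal{Q}(K)$, and this is where the regularity of $X$ is used decisively. I would first pass to honest invertible sheaves. Because $k$ is algebraically closed, both Brauer groups vanish: $\Br(\OO_K)\simeq\Br(k)=0$, and $\Br(K)=0$ since $\HH^{1}(k,\Q/\Z)=0$ (equivalently $K$ is quasi-algebraically closed). Feeding this, together with $f_{\ast}\OO_X=\OO_S$, into the low-degree exact sequence of the Leray spectral sequence for $\G_m$ along $f$ and along $f_K$ gives canonical identifications $\Pic_{X/S}(S)=\Pic(X)$ and $\Pic_{X_{K}/K}(K)=\Pic(X_K)$. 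Now take $\xi\in\mathcal{Q}(K)$; it corresponds to a class $\eta\in\Pic^{\circ}_{X_{K}/K}(K)$, which by the generic identification is represented by a genuine invertible sheaf $L_K$ on $X_K$. Since $X$ is regular, $X_s$ is a Cartier divisor and the restriction $\Pic(X)\to\Pic(X_K)$ is surjective (a Weil divisor on the open $X_K$ extends to $X$, where Weil and Cartier divisors coincide); choose an extension $L$ of $L_K$ and view its class in $\Pic_{X/S}(S)$. As $\PP$ is the schematic closure of $\PP_K$ and the base is a discrete valuation ring, this class lies in $\PP(\OO_K)$ as soon as its generic fibre $L|_{X_K}=L_K$ lies in $\PP_K$, which holds by construction. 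Finally $q(L)\in\mathcal{Q}(\OO_K)$ restricts to $q_K(\eta)=\xi$ because $q_K$ is an isomorphism, proving surjectivity.

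The step I expect to be the main obstacle is precisely this extension-and-descent. Regularity of $X$ produces an extension $L$ of $L_K$ cleanly, but $L$ is ambiguous up to the vertical classes coming from the components of $X_s$, and it is exactly the passage to the quotient $\mathcal{Q}=\PP/E$---designed to annihilate the schematic closure $E$ of the neutral element, hence this vertical ambiguity---that both makes $q(L)$ canonical and renders $\mathcal{Q}$ separated. Making rigorous the two Brauer vanishings, the compatibility of the Leray identifications on the generic and special fibres, and the schematic-closure description of $\PP(\OO_K)$ is where the argument must be made watertight; once the recognition criterion for N\'eron models is in place, everything else is formal.
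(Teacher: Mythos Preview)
The paper does not prove this statement at all: it is quoted verbatim as \cite{LLR}~3.7 and used as a black box, so there is no ``paper's own proof'' to compare against. Your proposal is therefore an attempt to supply an argument where the author has deliberately outsourced one.

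That said, your outline is essentially the classical route (Raynaud, and \cite{BLR}~9.5): verify smoothness and separatedness from \cite{Raynaud}, identify the generic fibre, and over the strictly henselian base reduce the N\'eron mapping property to bijectivity of $\mathcal{Q}(\OO_K)\to\mathcal{Q}(K)$, which you obtain by lifting a degree-$0$ line bundle on $X_K$ to $X$ via regularity and then projecting through $q$. The core mechanism is correct. Two places deserve tightening. First, your appeal to ``$\PP$ is the schematic closure of $\PP_K$'' to place the extended class in $\PP(\OO_K)$ is shaky as written, because $\PP$ need not be representable; the clean argument is the direct one you already have in hand---the total degree is locally constant on $S$, so $\deg L_K=0$ forces $\deg(L|_{X_s})=0$ and hence $L\in\PP(S)$ by the very definition of $\PP$. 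Second, the finite-type claim for $\mathcal{Q}$ is more than a remark about finitely many components of $X_s$: one really uses that the intersection form on vertical divisors has one-dimensional kernel (generated by $X_s$), so that $E_s$ and hence $\pi_0(\mathcal{Q}_s)$ are finite; this is where regularity of $X$ enters again. With those two points made precise, your argument is sound and is, in spirit, the proof one finds in \cite{BLR}~9.5 and in \cite{LLR}.
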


\subsubsection{} \label{Brauer nul}Notons $J=\mathcal{Q}^{\circ}$
la composante neutre de $\QQ$. Comme $\OO_K$ est complet à corps
résiduel algébriquement clos, d'après un résultat de S. Lang
(théorème 1.1, \cite{Grothendieck}), le groupe de Brauer de $K$
est nul. Il en résulte que tout élément de $\Pic_{X/S}(K)$ peut se
représenter par un faisceau inversible sur $X_K$. Par suite, le
morphisme naturel $q(S):\Pic^{\circ}_{X/S}(S)\rightarrow J(S)$ est
surjectif (9.6 de \cite{BLR}).


\subsection{Foncteur de Greenberg et foncteur de Picard} \label{Greenberg}

\subsubsection{}\label{Greenberg lisse} Soit $G$ un schéma en groupes
lisse de type fini sur $S$. Les foncteurs de Greenberg nous permet
de construire un k-groupe pro-algébrique ($:=$ un pro-objet dans
la catégorie des $k$-groupes algébriques). Rappelons d'abord
brièvement cette construction. Notons $W$ l'anneau de Witt du
corps $k$, et $\mathbf{W}$ le foncteur de Witt sur la catégorie
des $k$-algèbres $\mathbf{Alg}/k$. Soit $n\in \mathbf{Z}_{\geq
1}$, notons $\OO_{K,n}=\OO_{K}/\pi^{n}$. Alors $\OO_{K,n}$ est
canoniquement un $W$-module de longueur finie. On définit
$\mathrm{Gr}_n(G)$ comme le faisceau fpqc sur $\mathbf{Alg}/k$
associé au foncteur $A\mapsto G(\OO_{K,n}\otimes_W
\mathbf{W}(A))$. D'après Greenberg (\cite{Greenberg}), ce faisceau
est représentable par un $k$-schéma en groupes lisse de type fini.
Pour chaque entier $n\geq 1$, le morphisme canonique d'anneaux
$\OO_{K,n+1}\rightarrow \OO_{K,n}$ induit un morphisme lisse de
k-schémas en groupes $\alpha_{n}:\Gr_{n+1}(G)\rightarrow
\Gr_n(G)$, dont le noyau est un groupe unipotent connexe sur $k$.
De plus, le morphisme de groupes canonique
$G(\OO_{K,n})\rightarrow \mathrm{Gr}_n(G)(k)$ est un isomorphisme.
Sous cette identification, le morphisme
$\alpha(k):\Gr_{n+1}(G)(k)\rightarrow \Gr_n(G)(k)$ s'identifie
naturellement au morphisme canonique $G(\OO_{K,n+1})\rightarrow
G(\OO_{K,n})$. Les $k$-groupes algébriques $\Gr_n(G)$ forment un
système projectif $\{(\Gr_n(G),\alpha_n)\}_{n\geq 1}$ de la
catégorie des $k$-groupes algébriques, dont les morphismes de
transition sont tous lisses à noyau connexe.

\subsubsection{}\label{Greenberg Pic} Soit $f:X\rightarrow S$ une courbe propre, telle
que $f(X)=\{s\}$. Alors si $X \!\neq\! \emptyset$, $X/S$ n'est
jamais plat, et les foncteurs de Picard $\Pic_{X/S}$ et
$\Pic^{\circ}_{X/S}$ ($:=$ le sous-foncteur ouvert de $\Pic_{X/S}$
formé des faisceaux inversibles de degré $0$ sur chaque composante
irréductible de $X$) ne sont pas représentables. Néanmoins, comme
l'a montré Lipman (\cite{Lipman}), la réalisation de Greenberg de
$\Pic_{X/S}$ (resp. de $\Pic^{\circ}_{X/S}$) est représentable par
un $k$-schéma en groupes lisse. Plus précisément, comme $\OO_K$
est naturellement une $W=W(k)$-algèbre, on trouve que $X$ possède
une structure de $W$-schéma propre. On pose $\Gr(\Pic_{X/S})$
(resp. $\Gr(\Pic^{\circ}_{X/S})$) le faisceau fpqc associé au
foncteur
$$
\mathbf{Alg}/k\rightarrow \mathfrak{Ab}, ~~~~~~ A\mapsto
\Pic(X\otimes_{W}W(A))~~~~ (resp. ~~A\mapsto
\Pic^{\circ}(X\otimes_{W}W(A))).
$$

\begin{theorem}[\cite{Lipman}]\label{Lipman} Le foncteur $\Gr(\Pic_{X/S})$
(resp. $\Gr(\Pic^{\circ}_{X/S})$) est représentable par un schéma
en groupes lisse sur $k$ (resp. par un schéma en groupes lisse
connexe sur $k$), qui est de dimension la longueur du $W$-module
$\HH^{1}(X,\OO_{X})$. De plus, le morphisme canonique
$$
\Pic(X)\rightarrow \Gr(\Pic_{X/S})(k), ~~~~(resp.
~~\Pic^{\circ}(X)\rightarrow \Gr(\Pic^{\circ}_{X/S})(k))
$$
est un isomorphisme.
\end{theorem}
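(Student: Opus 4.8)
The plan is to build $\Gr(\Pic_{X/S})$ out of the Picard functor of the reduced special fibre by a d\'evissage along the $\pi$-adic filtration, the deformations at each stage being governed by coherent cohomology on a curve, where $\HH^{2}$ vanishes. Since $X$ is noetherian, proper and $f(X)=\{s\}$, the parameter $\pi$ is nilpotent on $X$: I would choose $n$ with $\pi^{n}\OO_X=0$, so that $X$ is an $\OO_{K,n}$-scheme and $X\otimes_{W}\mathbf{W}(A)=X\otimes_{\OO_{K,n}}(\OO_{K,n}\otimes_{W}\mathbf{W}(A))$ for every $k$-algebra $A$. Writing $X_{j}=X\otimes_{\OO_{K,n}}\OO_{K,j}$ for $1\le j\le n$, I would then consider the chain of nilpotent thickenings $X_{1}\hookrightarrow\cdots\hookrightarrow X_{n}=X$, where $X_{1}$ is a proper $k$-curve and the ideal of $X_{j}$ in $X_{j+1}$ is the coherent $\OO_{X_{1}}$-module $\mathcal{I}_{j}:=\pi^{j}\OO_X/\pi^{j+1}\OO_X$, of square zero.

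First I would treat a single step of the tower. From the exact sequence $0\to\mathcal{I}_{j}\to\OO_{X_{j+1}}^{\ast}\to\OO_{X_{j}}^{\ast}\to0$ (using the isomorphism $1+\mathcal{I}_{j}\cong\mathcal{I}_{j}$ coming from $\mathcal{I}_j^2=0$) together with the vanishing $\HH^{2}(X_{1},-)=0$ on the curve $X_{1}$, deformation theory shows that every invertible sheaf on $X_{j}$ lifts to $X_{j+1}$ and that the kernel of the restriction is a quotient of $\HH^{1}(X_{1},\mathcal{I}_{j})$. Passing to the Greenberg realization turns this into an exact sequence of $k$-group functors
\[
0\longrightarrow U_{j}\longrightarrow\Gr(\Pic_{X_{j+1}/S})\longrightarrow\Gr(\Pic_{X_{j}/S})\longrightarrow0,
\]
in which $U_{j}$ is the Witt-vector realization of the vector group attached to $\HH^{1}(X_{1},\mathcal{I}_{j})$, hence a connected unipotent $k$-group; the surjectivity on the right is precisely the unobstructedness provided by $\HH^{2}=0$.

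Then representability, smoothness, connectedness and the dimension follow by induction on $n$. The base $\Gr(\Pic_{X_{1}/S})$ is, up to the level-one Greenberg identification, the Picard functor of the proper $k$-curve $X_{1}$, representable by a smooth $k$-group scheme whose identity component is connected, since $\dim X_{1}\le 1$ forces $\HH^{2}(X_{1},\OO_{X_{1}})=0$. As an extension of a smooth (resp. smooth connected) $k$-group scheme by a smooth connected unipotent one is again representable and smooth (resp. and connected), this propagates up the tower and gives the representability and smoothness of $\Gr(\Pic_{X/S})$ and the connectedness of $\Gr(\Pic^{\circ}_{X/S})$. For the dimension I would use that $\Gr(\Pic_{X/S})$ is smooth, so its dimension equals $\dim_{k}\Lie\Gr(\Pic_{X/S})$, and that the Lie algebra of the realization is $\HH^{1}(X,\OO_X)$ viewed as a finite-length $W$-module; since the Greenberg construction sends a finite-length $W$-module $M$ to a unipotent $k$-group of dimension $\mathrm{length}_{W}M$, one gets $\dim\Gr(\Pic_{X/S})=\mathrm{length}_{W}\HH^{1}(X,\OO_X)$.

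Finally, the comparison on $k$-points is formal: $\mathbf{W}(k)=W$ and $X\otimes_{W}W=X$, so the presheaf underlying $\Gr(\Pic_{X/S})$ already takes the value $\Pic(X)$ at $A=k$, and since $k$ is algebraically closed every fpqc cover of $\Spec k$ splits, so sheafification leaves the $k$-points unchanged; hence $\Pic(X)\to\Gr(\Pic_{X/S})(k)$ and $\Pic^{\circ}(X)\to\Gr(\Pic^{\circ}_{X/S})(k)$ are isomorphisms. The main obstacle is the inductive step on representability and smoothness — namely, verifying that the Witt-vector realization of $\HH^{1}(X_{1},\mathcal{I}_{j})$ really is a connected unipotent algebraic group and that the extension above is representable by a scheme — which is the technical heart of Lipman's theorem; the remaining ingredients are bookkeeping with the $\pi$-adic filtration and the systematic use of $\HH^{2}=0$ on a curve.
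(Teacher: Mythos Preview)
The paper does not give its own proof of this statement: it is quoted verbatim as a theorem of Lipman, with only the reference \cite{Lipman} and no argument. So there is nothing in the paper to compare your proposal against beyond the surrounding discussion.

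That said, your sketch is essentially Lipman's own strategy (and is consistent with the Oort d\'evissage the paper recalls later in \S2.1.9--2.1.10): reduce to the Picard scheme of the reduced curve $X_{1}$ and climb the tower of square-zero thickenings, using $\HH^{2}=0$ on a curve to guarantee unobstructed lifting at each step. One point deserves tightening: the kernel $U_{j}$ of $\Gr(\Pic_{X_{j+1}/S})\to\Gr(\Pic_{X_{j}/S})$ is not the full vector group on $\HH^{1}(X_{1},\mathcal I_{j})$ but its quotient by the image of the boundary $\partial:\HH^{0}(X_{j},\OO_{X_{j}})\to\HH^{1}(X_{1},\mathcal I_{j})$; this is exactly the content of Oort's lemma that the paper cites as \ref{Oort}. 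This does not harm the inductive representability/smoothness/connectedness argument, but it does mean the dimension is not obtained by summing $\dim_{k}\HH^{1}(X_{1},\mathcal I_{j})$. Your alternative route via the Lie algebra is the right fix, though the phrasing ``the Lie algebra is $\HH^{1}(X,\OO_X)$ viewed as a $W$-module'' is loose: the Lie algebra is a $k$-vector space, and what one actually shows is that the tangent space of $\Gr(\Pic_{X/S})$ at the origin is the Greenberg realization of the finite-length $W$-module $\HH^{1}(X,\OO_X)$, whose $k$-dimension is $\mathrm{length}_{W}\HH^{1}(X,\OO_X)$. With that adjustment, your outline matches Lipman's argument.
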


\section{Fonctions de Herbrand}\label{Herbrand}

Gardons les notations précédentes. Soit $f:X\rightarrow S$ une
$S$-courbe propre plate, telle que les conditions suivantes soient
remplies: (i) $f_{\ast}(\OO_X)=\OO_S$; (ii) $X_{K}$ est une courbe
géométriquement intègre \emph{lisse} de genre arithmétique $1$
(\emph{i.e.} $h^{1}(X_K,\mathcal{O}_{X_K})=1$); (iii) $X$ est une
surface régulière minimale sur $S$. En particulier, $X_{K}$ est un
torseur sous sa jacobienne $J_{K}$. Soit
$X_s=\sum_{i=1}^{r}n_iC_i$ la décomposition de $X_s$ en
composantes irréductibles réduites, et notons $d$ le pgcd des
$n_i$. On désigne par $D$ le diviseur
$$
\frac{1}{d}X_s=\sum_{i=1}^{r}\frac{n_i}{d}\cdot C_i,
$$
et notons $\mathcal{I}$ l'idéal $\OO_{X}(-D)$. La fibre spéciale
$X_k$ est donc définie par $\mathcal{I}^{d}=\pi\OO_{X}\subset
\OO_X$. Pour tout $n\in \mathbf{Z}_{\geq 1}$, on note $X_n$ le
sous-schéma fermé de $X$ défini par l'idéal
$\mathcal{I}^{n}\subset \OO_X$. Le but de ce numéro est d'étudier
la variation de $n\mapsto h^{1}(X,\mathrm{O}_{X_n})$ ($:=$ la
longueur de $\OO_{K}$-module $\HH^{1}(X,\OO_{X_n})$).
\footnote{Une partie de résultats de cette section a été rédigée
dans \cite{KatsuraUeno}.}

\subsection{Etude du faisceau dualisant}\label{etude de dualisant}

\subsubsection{}\label{notationss}Commençons par un résultat
classique.

\begin{lemma}\label{d1=d2=d} Notons $d_1$ l'ordre du torseur $X_{K}$ dans
$\HH^{1}(\mathrm{Spec}(K),J_{K})$, $d_2$ le minimum des degrés des
extensions $K'$ de $K$ telles que $X(K')\neq \emptyset$, et $d_3$
le minimum des multiplicités des composantes irréductibles de
$X_s$. Alors $d_1=d_2=d_3=d$.
\end{lemma}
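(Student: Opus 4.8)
The plan is to show that the four integers coincide by assembling a cycle of divisibilities and inequalities that pins them all to $d=\gcd_i n_i$. The only relation available for free is $d\mid d_1$, which I would record first, via intersection theory on the regular surface $X$. Since $X$ is regular the restriction $\Pic(X)\to\Pic(X_K)$ is surjective, and for $L\in\Pic(X)$ the degree on the generic fibre is constant in the flat proper family $X/S$, so $\deg_{X_K}(L|_{X_K})=\sum_i n_i\,(L\cdot C_i)$, a multiple of $d$. As $d_1$ is the positive generator of the image of $\deg\colon\Pic(X_K)\to\Z$ (the order of $[X_K]$ in $\HH^1(\Spec K,J_K)$ being the smallest positive degree of a $K$-rational divisor class), this yields $d\mid d_1$; and trivially $d\le d_3=\min_i n_i$.

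Next I would prove $d_1=d_2$. The inclusion $d_1\mid d_2$ is the standard fact that the period divides the index, since an effective divisor of degree $d_2$ gives a rational divisor class of that degree. For the converse I use $\Br(K)=0$ (\ref{Brauer nul}): the cokernel of $\Pic(X_K)\to\Pic_{X_K/K}(K)$ embeds into $\Br(K)$, so a $K$-rational divisor class of degree $d_1$ is represented by an honest $K$-rational divisor $\Delta$ of degree $d_1$. Since $X_K$ has arithmetic genus $1$, Riemann--Roch gives $h^0(\Delta)\ge d_1\ge 1$, and a $K$-point of $|\Delta|\cong\PP^{h^0-1}_K$ produces an effective $K$-rational divisor of degree $d_1$, hence a closed point of degree $\le d_1$. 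Thus $d_2\le d_1$ and $d_1=d_2$.

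For $d_2=d_3$ I would argue on the model. Given a closed point $x\in X_K$ of degree $\delta$, the extension $\kappa(x)/K$ is totally ramified (as $k$ is algebraically closed), and its horizontal closure $\overline{\{x\}}$ meets $X_s$, so $\delta=\sum_i n_i(\overline{\{x\}}\cdot C_i)\ge\min_i n_i$; minimising over $x$ gives $d_3\le d_2$. Conversely, fix a component $C_{i_0}$ with $n_{i_0}=d_3$ and a totally ramified $K'/K$ of degree $n_{i_0}$. At a smooth point of $C_{i_0}$ the local equation $\pi=u\,t^{n_{i_0}}$ becomes, after the base change $\pi={\pi'}^{\,n_{i_0}}$ and normalisation, a union of branches along which $t$ and $\pi'$ acquire equal valuation; the resulting component of the minimal regular model of $X_{K'}$ is reduced, its smooth locus is nonempty, and as $\OO_{K'}$ is strictly henselian it carries a section. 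Hence $X(K')\neq\emptyset$ and $d_2\le d_3$.

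The crux, and the step I expect to be the main obstacle, is $d_1\mid d$. The idea is that the same ramified base change applies to every component: for each $i$, a totally ramified $K'/K$ of degree $n_i$ should turn the strict transform of $C_i$ into a reduced component of the minimal regular model of $X_{K'}$, thereby producing a point $X(K')\neq\emptyset$. By restriction--corestriction, $\mathrm{res}_{K'/K}[X_K]=0$ forces $d_1=\mathrm{ord}[X_K]\mid[K':K]=n_i$; as this holds for all $i$ we obtain $d_1\mid d$, and with $d\mid d_1$ this gives $d_1=d$, whence $d=d_1=d_2=d_3$. The delicate point is purely characteristic-$p$: when $p\mid n_i$ the base change need not separate the branches nor immediately reduce the multiplicity (wild ramification), and one must either check directly that the multiplicity of $C_i$ drops to $1$ after base change and resolution, or replace the cohomological restriction--corestriction argument by the structural statement that $X_s$ is a $d_1$-fold multiple of the special fibre of the minimal model of $J_K$, whose multiplicities have gcd $1$. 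Establishing this multiple-fibre structure uniformly in the residue characteristic is where the real work lies.
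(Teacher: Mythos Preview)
Your chain of divisibilities is sound, but you have taken a considerably harder route than the paper, and the step you yourself flag as the crux is a genuine gap. Both the inequality $d_2\le d_3$ in your step~3 and the divisibility $d_1\mid n_i$ in your step~4 rest on the same claim: after base change by a totally ramified $K'/K$ of degree $n_i$, the strict transform of $C_i$ in the minimal regular model of $X_{K'}$ is reduced. When $p\mid n_i$ this is exactly the wild case you worry about, and your sketch (``$\pi=u\,t^{n_i}$ becomes $\pi'^{\,n_i}=u\,t^{n_i}$, normalise, get reduced branches'') does not go through as stated: the branches need not separate, and after resolution the multiplicity need not drop to~$1$. The alternative you suggest (that $X_s$ is a $d_1$-fold multiple of the special fibre of the minimal model of $J_K$) is not easier; it is essentially the statement being proved.

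The paper avoids base change entirely by a direct construction that you almost set up in your first paragraph. Since $X$ is regular and $\OO_K$ strictly henselian, for each $i$ one can choose a closed point $y\in C_i$ not on any other $C_j$ and smooth on $(X_s)_{\mathrm{red}}$, and cut out a horizontal curve $\Delta_i\subset X$ through $y$ transverse to $C_i$; then $\Delta_i\cdot X_s=n_i$, so $\deg_K(\Delta_{i,K})=n_i$. An integer combination of the $\Delta_{i,K}$ gives a $K$-rational divisor of degree $d=\gcd_i n_i$, and your own Riemann--Roch argument turns it into a closed point of $X_K$ of degree $d$. This yields $d_2\le d$ directly (stronger than your $d_2\le d_3$), and together with $d\mid d_2$ and $d_1\mid d_2$ gives $d=d_1=d_2$; your step~4 becomes unnecessary. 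For $d_3=d$ one then looks at the closure $\Delta$ of this degree-$d$ point: $\Delta\cdot X_s=d$ and $X_s=dD$ force $\Delta\cdot D=1$, so $\Delta$ meets a single $C_{i_0}$ with $n_{i_0}/d=1$, hence $d_3\le n_{i_0}=d$. In short: rather than passing to $K'$ to find \emph{points}, stay over $K$ and slice the regular surface to find \emph{divisors} of degree $n_i$; this is a one-line construction, uniform in residue characteristic, and it closes the cycle without any of the wild-ramification difficulties.
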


\begin{proof} Pour $n\in
\mathbf{Z}_{>0}$ un entier positif, le torseur $n\cdot X_{K}$ est
isomorphe à la composante irréductible $\Pic^{n}_{X/S}$ de
$\Pic_{X_K/K}$ formée des faisceaux inversibles de degré $n$. Par
suite, le torseur $n\cdot X_K$ est trivial si et seulement si
$\Pic^{n}_{X/K}(K)\neq \emptyset$. Or $\OO_K$ est strictement
hensélien à corps résiduel algébriquement clos, on a
$\mathrm{Br}(K)=0$ (\ref{Brauer nul}). Par suite,
$\Pic^{d_2}_{X/K}(K)=\Pic^{d_2}(X)\neq \emptyset$. Soit
$\Sigma_K\subset X_K$ un diviseur de degré $d_2$, et notons
$\Sigma$ son adhérence schématique dans $X$. Alors
$d_2=\Sigma\cdot X_s=d\cdot \Sigma\cdot D$ est divisible par $d$.
Clairement, on a $d_1|d_2$. Par ailleurs, comme $\OO_K$ est
strictement hensélien, pour chaque $i$, on peut trouver un
diviseur positif (de Cartier relatif) $\Delta_i$ de $X/S$ de degré
$n_i$ (\cite{BLR}). Une combinaison convenable nous fournit un
diviseur $\Delta'$ de degré $d$ de $X_K$. Or $X_K$ est une courbe
de genre $1$, et $d\geq 1$, on en déduit que
$h^{0}(X_K,\OO_{X_{K}}(\Delta_K'))>0$. Il existe donc un diviseur
positif $\Delta_K$ de degré $d$ de $X_K$ linéairement équivalent à
$\Delta_K'$. Par suite $\Delta_K$ est intègre, \emph{i.e.},
$\Delta_K=\{x\}$ avec $x\in X_K$ un point fermé de degré $d$. D'où
$d_2\leq d$, et on a donc $d=d_1=d_2$. Soit maintenant $x\in X_K$
un point fermé de degré $d$, notons $\Delta=\overline{\{x\}}$
l'adhérence schématique de $\{x\}$ dans $X$. Alors $\Delta\cdot
X_s=d(\Delta\cdot D)=d$. Donc $\Delta\cap D=\{y\}$, et $D$ est
régulier en $y$. Notons $D_i$ la composante irréductible de $D$
telle que $y\in D$, alors $D$ est de multiplicité $d$ dans $X_s$,
d'où $d=d_3$. Ceci termine la démonstration.
\end{proof}

\begin{remark} Soit $Y\subset X$ un diviseur effectif plat de degré
$d$ sur $S$ (dont l'existence est assurée par \ref{d1=d2=d}).
Notons que $Y$ est nécessairement intègre régulier, et coupe
transversalement une unique composante $C_i$, de multiplicité $1$
dans $D$. Alors $Y\hookrightarrow X$ est un rigidificateur pour le
foncteur de Picard $\Pic_{X/S}$. En fait, d'après 2.2.2 de
\cite{Raynaud}, il suffit de vérifier l'injectivité du morphisme
canonique $\HH^{0}(X_s, \OO_{X_s})\rightarrow
\HH^{0}(Y_s,\OO_{Y_s})$. Montrons par récurrence sur $n$ que le
morphisme canonique $\HH^{0}(X_n,\OO_{X_n})\rightarrow
\HH^{0}(Y_n,\OO_{Y_n})$ est injectif (où $Y_n:=Y\cap X_n$).
Commençons par le cas où $n=1$: d'après \ref{lemme cohomologique}
ci-après, on sait que $\HH^{0}(X_1,\OO_{X_1})=k$. Soit
$\varepsilon\in \HH^{0}(X_1,\OO_{X_1})$, alors $\varepsilon$ est
une fonction globale de $X_1$ qui est constante. Par conséquent,
l'image de $\varepsilon$ dans $\HH^{0}(Y_1,\OO_{Y_1})$ est nulle
si et seulement si $\varepsilon=0$, autrement dit, le morphisme
$\HH^{0}(X_1,\OO_{X_1})\rightarrow \HH^{0}(Y_{1},\OO_{Y_1})$ est
injectif. Supposons ensuite l'assertion ci-dessus vérifiée pour
$n=n_0\geq 1$. Partons du diagramme commutatif à lignes exactes
suivant:
$$
\xymatrix{0\ar[r]& \mathcal{I}^{n_0}|_{X_1}\ar[r]\ar[d]&
\OO_{X_{n_0+1}}\ar[r]\ar[d]& \OO_{X_{n_0}}\ar[r]\ar[d]& 0
\\0\ar[r]& \mathcal{I}^{n_0}|_{Y_1}\ar[r]&
\OO_{Y_{n_0+1}}\ar[r]& \OO_{Y_{n_0}}\ar[r]& 0 }.
$$
On voit qu'il suffit de vérifier l'injectivité du morphisme
$\HH^{0}(X_1,\mathcal{I}^{n_0}|_{X_1})\rightarrow
\HH^{0}(Y_1,\mathcal{I}^{n_0}|_{Y_1})$. En vertu du lemme
\ref{lemme cohomologique}, on peut supposer
$\mathcal{I}^{n_0}|_{X_1}\simeq \OO_{X_1}$, auquel cas on peut
identifier le morphisme de gauche au morphisme canonique
$\HH^{0}(X_1,\OO_{X_1})\rightarrow \HH^{1}(Y_1,\OO_{Y_1})$ qui est
injectif d'après ce qui précède. D'où l'assertion.
\end{remark}

\subsubsection{} On note $\omega_{X/S}=f^{!}\mathcal{O}_{S}$ le faisceau dualisant
relatif sur $X/S$. Pour tout $n\geq 0$, notons $\omega_n$ le
faisceau dualisant sur $X_n$. Donc
$\omega_{n}=\left(\OO_{X}(nD)\otimes \omega_{X/S}\right)|_{X_n}$.

\begin{lemma} Pour tout $i=1,\cdots,r$, on a $\omega_{X/S}\cdot
C_i=0$.
\end{lemma}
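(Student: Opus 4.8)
The plan is to work on the regular arithmetic surface $X/S$ with the adjunction (genus) formula and to exploit the minimality hypothesis. For each component write $a_i=\omega_{X/S}\cdot C_i$; the goal is to show $a_i=0$ for every $i$. Throughout I use that intersection numbers of a line bundle with the vertical curves $C_i$ are well defined on the regular two-dimensional scheme $X$, and that $k$ is algebraically closed, so $p_a(C_i)\geq 0$ for each prime vertical divisor.

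First I would pin down the intersection of $\omega_{X/S}$ with the whole special fibre. Since $X_s=\mathrm{div}(\pi)=f^{\ast}(s)$, one has $\omega_{X/S}\cdot X_s=\deg\big(\omega_{X/S}|_{X_s}\big)$, and because the degree of a line bundle is locally constant in the flat proper family of curves $X/S$, this equals $\deg\big(\omega_{X/S}|_{X_{\eta}}\big)$. On the smooth generic fibre $\omega_{X/S}$ restricts to $\omega_{X_K/K}$, of degree $2p_a(X_K)-2=0$ because $X_K$ has arithmetic genus $1$. Writing $X_s=\sum_i n_iC_i$ this yields the relation $\sum_i n_i a_i=0$.

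The key step is then to show $a_i\geq 0$ for every $i$; this is where minimality enters, and it is the main obstacle. Adjunction on the regular surface $X$ reads $a_i+C_i^2=2p_a(C_i)-2$, so $a_i=2p_a(C_i)-2-C_i^2$. By Zariski's lemma the intersection form on the lattice of vertical divisors is negative semi-definite with radical $\Q\cdot X_s$; hence $C_i^2\leq 0$, and in fact $C_i^2<0$ as soon as $r\geq 2$, since then $C_i$ is not proportional to $X_s$. Suppose $a_i\leq -1$. From $p_a(C_i)\geq 0$ and adjunction we get $C_i^2=2p_a(C_i)-2-a_i\geq -1$, which together with $C_i^2\leq -1$ forces $C_i^2=-1$ and $p_a(C_i)=0$; thus $C_i$ is a smooth rational curve of self-intersection $-1$. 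By Castelnuovo's criterion $C_i$ would be an exceptional curve of the first kind, contradicting the minimality of $X$. Therefore $a_i\geq 0$ whenever $r\geq 2$ (and when $r=1$ the relation $n_1a_1=0$ already gives $a_1=0$). The delicate point is precisely this numerical bookkeeping: one must invoke negative-definiteness so that the only possibility compatible with $a_i<0$ is a genuine $(-1)$-curve, which minimality forbids.

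Finally I would combine the two facts: all $a_i\geq 0$ together with $\sum_i n_i a_i=0$ and $n_i\geq 1$ force $a_i=0$ for every $i$, which is the assertion $\omega_{X/S}\cdot C_i=0$. Alternatively, the inequality $a_i\geq 0$ can be quoted as the standard statement that the relative dualizing sheaf of a relatively minimal regular fibered surface with $p_a(X_{\eta})\geq 1$ is nef; but the short Castelnuovo argument above makes the proof self-contained, and once the degree of $\omega_{X/S}$ on the fibre is computed everything else is formal.
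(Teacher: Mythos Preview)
Your proof is correct and follows essentially the same route as the paper's: both compute $\omega_{X/S}\cdot X_s=0$ from the generic fibre, then for $r\geq 2$ use negativity of $C_i^2$ together with adjunction to see that $\omega_{X/S}\cdot C_i<0$ would force $C_i$ to be a $(-1)$-curve, contradicting minimality. The only differences are cosmetic (you invoke Zariski's lemma and Castelnuovo by name and spell out the numerics a bit more explicitly), so there is nothing to add.
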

\begin{proof}  Comme $\omega_{X/S}|_{X_{\eta}}\simeq
\OO_{X_\eta}$, on a $\omega_{X/S}\cdot X_{s}=0$, \emph{i.e.},
$\sum_{i=1}^{r}n_i\left(\omega_{X/S}\cdot C_i\right)=0$. En
particulier, si $r=1$, le lemme en résulte. Supposons $r\geq 2$,
puisque $C_i\cdot X_k=0$, on obtient que $C_i\cdot C_i<0$. Si
$\omega_{X/S}\cdot C_i<0$, puisque
$2g(C_i)-2=(\omega_{X/S}+C_i)\cdot C_i\geq -2$, on a donc
$g(C_i)=0$, $C_i\cdot C_i=-1$. Ceci contredit le fait que $X/S$
est une surface régulière minimale. Donc $(\omega_{X/S}\cdot
C_i)\geq 0$, il en résulte que $\omega_{X/S}\cdot C_i=0$ pour tout
$i$.
\end{proof}

\begin{corollary}\label{dualisant} Il existe un unique entier $n$, $0\leq n< d$, tel
que $\omega_{X/C}\simeq \mathcal{I}^{n}$.
\end{corollary}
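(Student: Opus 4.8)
The plan is to identify $\omega_{X/S}$ inside $\Pic(X)$ as a class supported on the special fibre and then pin it down via the intersection pairing. First I would observe that $\omega_{X/S}|_{X_\eta}\simeq \omega_{X_K/K}\simeq \OO_{X_K}$, since $X_K$ is smooth of genus $1$: its dualizing sheaf has degree $2g-2=0$ and a nonzero global section, hence is trivial. As $X$ is regular, $\Pic(X)=\mathrm{Cl}(X)$, and restriction to the generic fibre fits into the exact sequence $\bigoplus_i \Z\,[C_i]\to \mathrm{Cl}(X)\to \mathrm{Cl}(X_\eta)\to 0$ whose left-hand map is spanned by the vertical prime divisors $C_i$. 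Since $\omega_{X/S}$ becomes trivial on $X_\eta$, it lies in the vertical part, so $\omega_{X/S}\simeq \OO_X\!\left(\sum_i a_i C_i\right)$ for some integers $a_i$.

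Next I would feed this into the preceding lemma. Intersecting with each $C_j$ gives $\sum_i a_i\,(C_i\cdot C_j)=\omega_{X/S}\cdot C_j=0$, so the vector $(a_i)$ lies in the kernel of the intersection matrix $(C_i\cdot C_j)$ of the fibre. This is where the main structural input enters: by Zariski's lemma the intersection form of a connected fibre is negative semidefinite with one-dimensional kernel, spanned by the multiplicity vector $(n_i)$ (the special fibre is connected because $X_K$ is geometrically connected and $f_\ast\OO_X=\OO_S$). Hence $(a_i)=\lambda(n_i)$ for some $\lambda\in\Q$. Writing $\sum_i c_i n_i=d$ with $c_i\in\Z$ (possible since $d=\gcd(n_i)$), I get $d\lambda=\sum_i c_i a_i\in\Z$, so $\lambda=m/d$ and $a_i=m\,(n_i/d)$. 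Therefore $\sum_i a_i C_i=mD$ and $\omega_{X/S}\simeq \OO_X(mD)=\mathcal{I}^{-m}$.

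For existence it then suffices to reduce the exponent modulo $d$: since $\mathcal{I}^{d}=\pi\OO_X$ is principal, $\mathcal{I}^{d}\simeq \OO_X$, and I can take the unique $n\in\{0,\dots,d-1\}$ with $n\equiv -m\pmod d$, which gives $\omega_{X/S}\simeq \mathcal{I}^{n}$.

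The remaining point, and the one I expect to need the most care, is uniqueness, which amounts to showing that $\mathcal{I}$ has exact order $d$ in $\Pic(X)$. Suppose $\mathcal{I}^{j}\simeq \OO_X$, that is $jD=\mathrm{div}(g)$ for some $g\in K(X)^{\ast}$. Restricting to $X_\eta$, the divisor $\mathrm{div}(g)$ is vertical, so $g|_{X_\eta}$ is a nowhere-vanishing function on the proper integral $K$-curve $X_K$; since $\HH^{0}(X_K,\OO_{X_K})=K$, this forces $g\in K^{\ast}$. Then $\mathrm{div}(g)=\mathrm{val}(g)\cdot\mathrm{div}(\pi)=\mathrm{val}(g)\cdot X_s=\mathrm{val}(g)\,d\,D$, and comparing coefficients yields $j=\mathrm{val}(g)\,d$, i.e. $d\mid j$. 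Hence $\mathcal{I}^{0},\dots,\mathcal{I}^{d-1}$ are pairwise non-isomorphic, and the integer $n$ with $0\le n<d$ and $\omega_{X/S}\simeq\mathcal{I}^{n}$ is unique.
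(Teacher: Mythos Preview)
Your argument is correct and follows the same route as the paper: trivialize $\omega_{X/S}$ on the generic fibre, write it as $\OO_X(\sum a_i C_i)$, use the preceding lemma $\omega_{X/S}\cdot C_j=0$ together with Zariski's negative semidefiniteness to force $(a_i)\in\Q\,(n_i)$, and then reduce the exponent modulo $d$. The paper compresses this into the phrase ``$Y\cdot Y=0$, par suite $Y$ est un multiple rationnel de $X_k$'' and does not spell out uniqueness; your explicit verification that $\mathcal{I}$ has exact order $d$ in $\Pic(X)$ (via $g\in K^\ast$ and $\mathrm{div}(\pi)=dD$) fills that in cleanly.
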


\begin{proof} Puisque $\omega_{X/S}|_{X_{\eta}}\simeq \OO_{X_{\eta}}$,
$\omega_{X/S}\simeq \OO_{X}(Y)$, avec $Y$ un diviseur de $X$ à
support dans $X_k$. Par suite, $Y$ est une combinaison des $C_i$.
Or d'après le lemme précédent, $Y\cdot C_i=0$, on obtient $Y\cdot
Y=0$, par suite, $Y$ est un multiple rationnel de $X_k$,
c'est-à-dire, $Y$ est linéairement équivalent à $nD$ avec $0\leq
n<d$. Le corollaire s'en déduit.
\end{proof}

\begin{corollary}\label{Classique} Supposons que $f$ possède une section $s$,
définie par le faiceau d'idéaux $\mathcal{J}$, et soit
$\omega=\mathcal{J}/\mathcal{J}^{2}$, on a un isomorphisme
canonique $\omega_{X/C}\simeq f^{\ast}\omega$.
\end{corollary}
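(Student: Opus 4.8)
The plan is to reduce everything to the triviality of $\omega_{X/S}$ and then transport a chosen trivialization to the section. First I would observe that the hypothesis that $f$ admits a section forces $d=1$: a section gives a $K$-rational point of $X_K$, so the integer $d_2$ of Lemma~\ref{d1=d2=d} equals $1$, whence $d=d_1=d_2=d_3=1$. Consequently $D=X_s$ and $\mathcal{I}=\OO_X(-X_s)=\pi\OO_X\simeq\OO_X$, so Corollary~\ref{dualisant} (whose only admissible exponent is now $n=0$) yields a (non-canonical) isomorphism $\omega_{X/S}\simeq\mathcal{I}^{0}=\OO_X$. In particular $\omega_{X/S}$ is trivial and admits a nowhere-vanishing global section $\eta$, i.e.\ an $\OO_X$-module generator. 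I would also record that $E:=s(S)$ is an effective Cartier divisor on $X$ (since $X$ is regular and $E$ is a prime divisor isomorphic to $S$), with ideal sheaf $\mathcal{J}=\OO_X(-E)$ and conormal sheaf $\mathcal{J}/\mathcal{J}^{2}=\omega$.

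The second step produces the canonical identification of $\omega$ with the restriction of $\omega_{X/S}$ to the section. Applying the adjunction formula to the Cartier divisor $E$ gives a canonical isomorphism $\omega_{E/S}\simeq(\omega_{X/S}\otimes\OO_X(E))|_{E}$. Since $s:S\xrightarrow{\sim}E$ one has $\omega_{E/S}\simeq\OO_S$, while $\OO_X(E)|_E=N_{E/X}=(\mathcal{J}/\mathcal{J}^{2})^{\vee}=\omega^{\vee}$; pulling back along $s$ I obtain a canonical isomorphism $s^{\ast}\omega_{X/S}\simeq\omega$.

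The third step globalizes this over $X$. Using hypothesis (i), $f_{\ast}\OO_X=\OO_S$, together with the triviality $\omega_{X/S}=\OO_X\cdot\eta$, I get $f_{\ast}\omega_{X/S}=\OO_S\cdot\eta$, an invertible $\OO_S$-module; the counit $f^{\ast}f_{\ast}\omega_{X/S}\rightarrow\omega_{X/S}$ then sends $1\otimes\eta$ to the generator $\eta$ and is therefore an isomorphism. On the other hand, pulling the counit back along $s$ (and using $s^{\ast}f^{\ast}=\mathrm{id}$) identifies the restriction map $f_{\ast}\omega_{X/S}\rightarrow s^{\ast}\omega_{X/S}$ with $\eta\mapsto s^{\ast}\eta$; since $\eta$ is nowhere vanishing, $s^{\ast}\eta$ generates $s^{\ast}\omega_{X/S}$, so this restriction map is an isomorphism as well. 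Composing the isomorphisms obtained so far gives the desired canonical chain
$$
\omega_{X/S}\;\simeq\;f^{\ast}f_{\ast}\omega_{X/S}\;\simeq\;f^{\ast}s^{\ast}\omega_{X/S}\;\simeq\;f^{\ast}\omega .
$$

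The part that needs the most care is the transition from the pointwise statement $s^{\ast}\omega_{X/S}\simeq\omega$ to the global statement over $X$: a priori $s^{\ast}$ only controls $\omega_{X/S}$ along the section, and it is precisely the triviality of $\omega_{X/S}$ (equivalently $d=1$) that lets me promote a generator to a generator of $f_{\ast}\omega_{X/S}$, making the counit and the restriction maps isomorphisms simultaneously. I would double-check two technical points feeding this: that $E$ is genuinely Cartier and that the adjunction isomorphism of the second step is functorial enough to be called canonical; and, on the generic fibre as a sanity check, that the invariant differential of the elliptic curve $X_K$ restricts to a nonzero cotangent vector at the rational point $s_K$, which is the geometric reason the restriction map is an isomorphism.
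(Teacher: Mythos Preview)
Your proof is correct and follows essentially the same route as the paper: both arguments use the section together with Lemma~\ref{d1=d2=d} to force $d=1$, invoke Corollary~\ref{dualisant} to trivialize $\omega_{X/S}$, deduce that the counit $f^{\ast}f_{\ast}\omega_{X/S}\to\omega_{X/S}$ is an isomorphism, and identify $s^{\ast}\omega_{X/S}$ with $\omega$ to conclude. The only cosmetic difference is that the paper phrases the identification $s^{\ast}\omega_{X/S}\simeq\omega$ via the duality formalism $s^{!}(\omega_{X/S})[-1]\simeq s^{\ast}\omega_{X/S}\otimes\omega^{\vee}$, whereas you use the classical adjunction formula for the Cartier divisor $E=s(S)$; these are the same statement.
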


\begin{proof} Par l'hypothèse, le torseur $X_{K}$ possède un point
rationnel, il est donc trivial en tant que torseur sous $J_K$. En
vertu du lemme \ref{d1=d2=d}, on a donc $d=1$. Par conséquent,
$\omega_{X/C}\simeq \OO_{X}$ (corollaire \ref{dualisant}), et le
morphisme canonique $f^{\ast}f_{\ast}\omega_{X/C}\rightarrow
\omega_{X/C}$ est un isomorphisme. Par ailleurs, on a des
isomorphismes canoniques: $\OO_S\simeq (fs)^{!}\OO_{S}\simeq
s^{!}(\omega_{X/S})[-1]\simeq s^{\ast}\omega_{X/S}\otimes
\omega^{\vee}$. Donc $\omega\simeq s^{\ast}\omega_{X/S}\simeq
s^{\ast}f^{\ast}f_{\ast}\omega_{X/S}\simeq f_{\ast}\omega_{X/S}$,
d'où l'isomorphisme canonique $f^{\ast}\omega\simeq
f^{\ast}f_{\ast}(\omega_{X/S})\simeq \omega_{X/S}$.
\end{proof}

\subsubsection{} \label{X'} Dans le cas général où $f$ n'a pas nécessairement de
section, on peut considérer le $S$-modèle propre régulier minimal
$f':X'\rightarrow S$ de $X_{K}'=\Pic^{\circ}_{X_K/K}$. Par suite,
$f'$ possède une section canonique $e$ ($=$ l'adhérence
schématique de l'élément neutre de $X'_{K}=J_{K}$ dans $X'$). Son
faisceau dualisant est $f{'}^{\ast}\omega$ (avec $\omega$ défini
par la section $e$ de $X'/S$, voir \ref{Classique}). On peut
retrouver $\omega_{X/S}$, à partir du faisceau $\omega$, en
utilisant certains invariants numériques de $X/S$. On renvoie à la
section \ref{Etude numerique} pour plus de détails.

\subsubsection{} Le lemme suivant est très utile dans la suite. On
trouvera une preuve dans \cite{Mumford}.

\begin{lemma}[\cite{Mumford}, page 332]\label{lemme cohomologique}
Soit $L$ un faisceau inversible sur $X_1$, de degré $0$ sur chaque
composante. Alors, si $\HH^{0}(X_1,L)\neq 0$, on a $L\simeq
\OO_{X_1}$, et $\HH^{0}(X_1,\OO_{X_1})\simeq k$.
\end{lemma}

\subsubsection{}\label{exact1} Soient $n\geq 2$ un entier, et $L$ un faisceau
inversible sur $X$, de degré $0$ sur chaque composante de $X_1$.
Considérons la suite exacte suivante:
\begin{equation}\label{suite de depart}
0\rightarrow \OO_{X}(-D)|_{(n-1)D}\rightarrow \OO_{nD}\rightarrow
\OO_D\rightarrow 0,
\end{equation}
en tensorisant par
$L^{\vee}\otimes \omega_{X/S}(nD)$, on obtient une suite exacte:
$$
0\rightarrow L^{\vee}\otimes\omega_{n-1}\rightarrow
L^{\vee}\otimes\omega_{n}\rightarrow
L^{\vee}\otimes\omega_{n}|_{D}\rightarrow 0.
$$
D'où une suite exacte
\begin{equation}\label{exact}
0\rightarrow
\HH^{0}(X_{n-1},L^{\vee}\otimes\omega_{n-1})\rightarrow
\HH^{0}(X_n,L^{\vee}\otimes\omega_n)\rightarrow
\HH^{0}(X_1,L^{\vee}\otimes\omega_n|_{D}).
\end{equation}
Par conséquent, on a
$$
h^{0}(X_{n-1},L^{\vee}\otimes\omega_{n-1})\leq
h^{0}(X_n,L^{\vee}\otimes\omega_n)\leq
h^{0}(X_{n-1},L^{\vee}\otimes\omega_{n-1})+1.
$$

\begin{lemma} \label{suite exacte de dualisant}Gardons les notations
ci-dessus. Alors, ou bien $\omega_{n}\simeq L|_{X_n}$, auquel cas
$\HH^{0}(X_1,L^{\vee}\otimes\omega_n|_{D})\simeq k$ et le complexe
(\ref{exact}) est exact à droite; ou bien $\omega_n\ncong
L|_{X_n}$, auquel cas le morphisme canonique
$$
\xymatrix{\HH^{0}(X_{n-1},L^{\vee}\otimes
\omega_{n-1})\ar[r]^{\simeq}&
\HH^{0}(X_n,L^{\vee}\otimes\omega_n)}
$$
est bijectif.
\end{lemma}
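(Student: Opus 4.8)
The plan is to read off the rightmost term of (\ref{exact}) through Lemma \ref{lemme cohomologique}, and then to separate the two alternatives by a Nakayama argument. First I would note that, since $X_1$ is the closed subscheme cut out by $\mathcal{I}=\OO_X(-D)$, one has $X_1=D$, and the sheaf obtained on the right of (\ref{suite de depart}) after twisting is $M:=(L^\vee\otimes\omega_{X/S}\otimes\OO_X(nD))|_{X_1}=N|_{X_1}$, where $N:=L^\vee\otimes\omega_{X/S}\otimes\OO_X(nD)$ is invertible on $X$. The key point is that $M$ has degree $0$ on each component $C_i$ of $X_1$: indeed $\deg_{C_i}L^\vee=0$ by the hypothesis on $L$, $\omega_{X/S}\cdot C_i=0$ by the lemma just proved, and $\OO_X(nD)\cdot C_i=\tfrac{n}{d}(X_s\cdot C_i)=0$. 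Hence Lemma \ref{lemme cohomologique} applies to $M$ and yields the dichotomy: either $M\ncong\OO_{X_1}$, in which case $\HH^0(X_1,M)=0$; or $M\simeq\OO_{X_1}$, in which case $\HH^0(X_1,M)\simeq k$.

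Next I would record that $\omega_n\simeq L|_{X_n}$ is equivalent to $N|_{X_n}\simeq\OO_{X_n}$, and that restricting along $X_1\hookrightarrow X_n$ gives the implication $N|_{X_n}\simeq\OO_{X_n}\Rightarrow M=N|_{X_1}\simeq\OO_{X_1}$; the content of the lemma is to control the converse. The main obstacle is precisely the following claim, which I expect to prove by Nakayama: assuming $M\simeq\OO_{X_1}$, the restriction map $\rho:\HH^0(X_n,N|_{X_n})\to\HH^0(X_1,M)\simeq k$ is surjective if and only if $N|_{X_n}\simeq\OO_{X_n}$. The delicate direction uses that $X_1$ and $X_n$ have the same underlying space $\bigcup_i C_i$: a section $t\in\HH^0(X_n,N|_{X_n})$ restricting to a trivializing section of $M$ must already generate $N|_{X_n}$ everywhere, because at each point $x$ one has $(N|_{X_n})_x\otimes k(x)=(N|_{X_1})_x\otimes k(x)$, so nonvanishing of $t$ at $x$ on $X_1$ forces $t_x$ to generate $(N|_{X_n})_x$ by Nakayama. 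Thus $\rho$ surjective gives a trivialization $\OO_{X_n}\xrightarrow{\sim}N|_{X_n}$, and conversely when $N|_{X_n}\simeq\OO_{X_n}$ the constant section $1$ restricts to a generator of $M$, so $\rho$ is onto.

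Finally I would assemble the statement from the long exact sequence of which (\ref{exact}) is the initial segment: the first arrow is always injective, and it is bijective exactly when $\rho=0$, while ``exact on the right'' means $\rho$ is surjective. If $\omega_n\simeq L|_{X_n}$, then $N|_{X_n}\simeq\OO_{X_n}$, so $M\simeq\OO_{X_1}$ and $\HH^0(X_1,M)\simeq k$, and by the claim $\rho$ is surjective, i.e.\ (\ref{exact}) is exact on the right. If instead $\omega_n\ncong L|_{X_n}$, then $N|_{X_n}\ncong\OO_{X_n}$ and two subcases arise from Lemma \ref{lemme cohomologique}: either $M\ncong\OO_{X_1}$, whence $\HH^0(X_1,M)=0$ and $\rho=0$ trivially; or $M\simeq\OO_{X_1}$, whence $\HH^0(X_1,M)\simeq k$ but $\rho$ fails to be surjective by the claim, so again $\rho=0$. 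In either subcase $\rho=0$ makes the first arrow of (\ref{exact}) bijective, giving the second alternative. Everything outside the Nakayama step reduces to intersection-number bookkeeping and the long exact sequence, so that step is the crux.
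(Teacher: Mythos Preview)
Your proof is correct and follows essentially the same route as the paper. The paper argues the first case by observing that after the identification $N|_{X_n}\simeq\OO_{X_n}$ the right-hand map becomes $\HH^0(X_n,\OO_{X_n})\to\HH^0(X_1,\OO_{X_1})\simeq k$, which is surjective because constants lift; and in the second case it asserts, exactly as you do via Nakayama, that a nonzero section of $(\omega_n\otimes L^\vee)|_{X_1}$ cannot lift to $X_n$ unless $\omega_n\otimes L^\vee|_{X_n}$ is trivial. Your version is slightly more uniform in that you package both directions into the single claim ``$\rho$ surjective $\Leftrightarrow N|_{X_n}\simeq\OO_{X_n}$'' and make the degree-zero check (needed to invoke Lemma~\ref{lemme cohomologique}) explicit, but the substance is the same.
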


\begin{proof} Supposons d'abord $\omega_n\simeq L|_{X_n}$. Alors
la suite exacte (\ref{exact}) se réécrit sous la forme suivante:
\begin{equation}\label{suite1}
0\rightarrow \HH^{0}(X_{n-1},\OO_{X}(-D)|_{(n-1)D})\rightarrow
\HH^{0}(X_n,\OO_{X_n})\rightarrow \HH^{0}(X_1,\OO_{X_1}),
\end{equation}
qui est aussi la suite exacte longue déduite de la suite exacte
courte (\ref{suite de depart}). D'après le lemme \ref{lemme
cohomologique}, on a $ \HH^{0}(X_1,\OO_{X_1})\simeq k $, i.e., les
fonctions globales de $X_1$ sont les fonctions constantes sur
$X_1$. Donc tout élément de $\HH^{0}(X_1,\OO_{X_1})$ peut se
relever en un élément de $\HH^{0}(X_n,\OO_{X_n})$. Par conséquent,
le complexe (\ref{suite1}) (et donc le complexe (\ref{exact})) est
exact à droite, d'où la première assertion. Ensuite, supposons
$\omega_n\ncong L|_{X_n}$, auquel cas, même si $(\omega_n\otimes
L^{\vee})|_{X_{1}}\simeq \OO_{X_1}$, une section non nulle de
$(\omega_n\otimes L^{\vee})|_{X_{1}}$ ne peut jamais se relever en
une section de $\omega_n\otimes L^{\vee}|_{X_n}$, (sinon
$\omega_n\otimes L^{\vee}|_{X_n}$ serait trivial). Donc le
morphisme canonique
$$
\xymatrix{\HH^{0}(X_{n-1},L^{\vee}\otimes
\omega_{n-1})\ar[r]^{\simeq}&
\HH^{0}(X_n,L^{\vee}\otimes\omega_n)}
$$
est bijectif.
\end{proof}

\subsubsection{}\label{notations} Soit $n\geq 2$ un entier,
alors le noyau du morphisme surjectif $\Pic(X_n)\rightarrow
\Pic(X_{n-1})$ est un $\OO_K$-module de longueur finie annulé par
$p$. Plus précisément, considérons l'immersion fermée
$X_{n-1}\hookrightarrow X_n$, son idéal de définition est l'idéal
cohérent $\mathfrak{N}:=\mathcal{I}^{n-1}/\mathcal{I}^{n}\subset
\OO_{X_{n}}$. Le faisceau $\mathfrak{N}$ est nilpotent (en fait,
$\mathfrak{N}^{2}=0$), on a donc une suite exacte:
$$
0\rightarrow 1+\mathfrak{N}\rightarrow \OO_{X_n}^{\ast}\rightarrow
\OO_{X_{n-1}}^{\ast}\rightarrow 0.
$$
Comme $X_n$ est de dimension $1$, la cohomologie
$\HH^{2}(X_n,1+\mathfrak{N})\simeq \HH^{2}(X_n,\mathfrak{N})$ est
nulle. D'où une suite exacte longue:
$$
\xymatrix{\HH^{0}(X_{n-1},\OO_{X_{n-1}}^{\ast})\ar[r]^{\partial^{\ast}}&
\HH^{1}(X_{n},1+\mathfrak{N})\ar[r]& \Pic(X_n)\ar[r]^{\alpha}&
\Pic(X_{n-1})\ar[r]& 0}. ~~(\ast)
$$
D'autre part, à partir de la suite exacte suivante:
$$
0\rightarrow \mathfrak{N}\rightarrow \OO_{X_n}\rightarrow
\OO_{X_{n-1}}\rightarrow 0,
$$
on obtient une suite exacte longue (rappelons que
$\HH^{2}(X_n,\mathfrak{N})=0$):
$$
\xymatrix{\HH^{0}(X_{n-1},\OO_{X_{n-1}})\ar[r]^{\partial}&
\HH^{1}(X_{n},\mathfrak{N})\ar[r]& \HH^{1}(X_{n},\OO_{X_n})
\ar[r]^{\alpha'}& \HH^{1}(X_{n-1},\OO_{X_{n-1}})\ar[r]& 0}.
$$
Par ailleurs, comme $\mathfrak{N}^{2}=0$, le morphisme $x\mapsto
1+x$ définit un isomorphisme de faisceaux abéliens
$$
\beta: \mathfrak{N}\rightarrow 1+\mathfrak{N}.
$$
On a alors le résultat suivant:
\begin{lemma}[Dévissage d'Oort, \cite{Oort} $\S$ 6 proposition]\label{Oort}
Gardons les notations ci-dessus, alors
$\beta(\mathrm{im}(\partial))=\mathrm{im}(\partial^{\ast})$.
\end{lemma}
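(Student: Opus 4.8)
The plan is to compute both connecting homomorphisms with Čech cocycles on a common finite affine open cover $\mathcal{U}=(U_i)$ of the one-dimensional scheme $X_n$, so that $\HH^{\geq 2}$ vanishes and Čech cohomology computes $\HH^\ast$. For $a\in\HH^0(X_{n-1},\OO_{X_{n-1}})$ I would choose local lifts $\tilde a_i\in\OO_{X_n}(U_i)$ and write $\partial(a)=[\tilde a_i-\tilde a_j]$; for $u\in\HH^0(X_{n-1},\OO_{X_{n-1}}^{\ast})$ I would choose unit lifts $\tilde u_i\in\OO_{X_n}^{\ast}(U_i)$ and write $\partial^{\ast}(u)=[\tilde u_i\tilde u_j^{-1}]$. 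Since $\beta$ is an isomorphism of abelian sheaves, $\beta_\ast\colon \HH^1(X_n,\mathfrak{N})\to\HH^1(X_n,1+\mathfrak{N})$ is an isomorphism, so it suffices to prove $\mathrm{im}(\partial)=\beta_\ast^{-1}\,\mathrm{im}(\partial^{\ast})$ as subgroups of $\HH^1(X_n,\mathfrak{N})$.

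The bridge between the two sides is the identity $\beta_\ast^{-1}\partial^{\ast}(u)=u^{-1}\partial(u)$. Indeed $\tilde u_i\tilde u_j^{-1}=1+\epsilon_{ij}$ with $\epsilon_{ij}=(\tilde u_i-\tilde u_j)\tilde u_j^{-1}$, where $\tilde u_i-\tilde u_j\in\mathfrak{N}(U_{ij})$ represents $\partial(u)$; because $\mathfrak{N}^{2}=0$, multiplication by $\tilde u_j^{-1}$ on this $\mathfrak{N}$-valued cocycle factors through the reduction $u^{-1}\in\HH^0(\OO_{X_{n-1}})$, i.e. through the module structure, so $[\epsilon_{ij}]=u^{-1}\partial(u)$. (The same splitting of a product of lifts shows that $\partial$ is an $\OO_K$-derivation $\HH^0(\OO_{X_{n-1}})\to\HH^1(\mathfrak{N})$.) I would then reduce the two images: writing $A=\HH^0(\OO_{X_{n-1}})$, which is local Artinian with maximal ideal $\mathfrak{m}$ and residue field $k$ (its reduction to $X_1$ is $\HH^0(X_1,\OO_{X_1})=k$ by \ref{lemme cohomologique}), the $\OO_K$-linearity of $\partial$ and $\partial(1)=0$ together with $A=\OO_K\cdot 1+\mathfrak{m}$ give $\mathrm{im}(\partial)=\partial(\mathfrak{m})$; dually $A^{\ast}=\OO_K^{\ast}\cdot(1+\mathfrak{m})$ and the constants lift to units on $X_n$, so $\mathrm{im}(\partial^{\ast})=\partial^{\ast}(1+\mathfrak{m})$.

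The heart of the matter — and the step I expect to be the only real obstacle — is controlling the twisting factor $u^{-1}$ in residue characteristic $p$, where no naive logarithm is available. The key observation is geometric: $\mathfrak{N}=\mathcal{I}^{n-1}/\mathcal{I}^{n}$ is annihilated by $\mathcal{I}\OO_{X_{n-1}}$, hence is a sheaf of $\OO_{X_1}$-modules, so the $A$-action on $\HH^1(\mathfrak{N})$ factors through $A\to\HH^0(X_1,\OO_{X_1})=k$; consequently every $m\in\mathfrak{m}=\ker(A\to k)$ acts as $0$ on $\HH^1(\mathfrak{N})$ (this is exactly where \ref{lemme cohomologique} is used, and it is the vanishing of the correction term $m\,\partial(m)$). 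Granting this, the twist disappears for $u=1+m$: here $u^{-1}=1+m'$ with $m'\in\mathfrak{m}$, whence $\beta_\ast^{-1}\partial^{\ast}(1+m)=u^{-1}\partial(m)=\partial(m)+m'\partial(m)=\partial(m)$, i.e. $\partial^{\ast}(1+m)=\beta_\ast\partial(m)$. Therefore $\beta_\ast^{-1}\mathrm{im}(\partial^{\ast})=\partial(\mathfrak{m})=\mathrm{im}(\partial)$, which is precisely the asserted equality $\beta(\mathrm{im}(\partial))=\mathrm{im}(\partial^{\ast})$.
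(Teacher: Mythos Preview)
The paper does not prove this lemma at all: it is stated with an attribution to Oort (\cite{Oort}, \S 6) and used as a black box. Your argument is therefore not a reproduction of the paper's proof but an independent one, and it is correct.

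Two remarks on how your route compares with the general d\'evissage. Oort's original statement applies to an arbitrary square-zero thickening, where one only knows that $\mathfrak{N}$ is an $\OO_{X_{n-1}}$-module; the identity $\beta_\ast^{-1}\partial^\ast(u)=u^{-1}\partial(u)$ and the derivation property of $\partial$ are then exploited without the simplification you use. Your proof is shorter because you observe the extra structure available in this paper's setting: $\mathfrak{N}=\mathcal{I}^{n-1}/\mathcal{I}^{n}$ is in fact annihilated by $\mathcal{I}$, hence is an $\OO_{X_1}$-module, and by \ref{lemme cohomologique} the ring $\HH^0(X_1,\OO_{X_1})$ is just $k$; so the $A$-action on $\HH^1(\mathfrak{N})$ collapses to a $k$-action and the twist $u^{-1}$ becomes the scalar $\bar u^{-1}\in k^\ast$, equal to $1$ once $u\in 1+\mathfrak{m}$. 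This is a genuine (and pleasant) shortcut, but it is specific to the situation of \S\ref{Herbrand} and would not survive in Oort's general framework.

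One cosmetic point: when you write $A=\OO_K\cdot 1+\mathfrak{m}$ and $A^\ast=\OO_K^\ast\cdot(1+\mathfrak{m})$, what you are really using is that the structure map $\OO_K\to A$ surjects onto the residue field $A/\mathfrak{m}=k$ (since $k$ is algebraically closed), so that every element of $A$ (resp.\ $A^\ast$) is congruent to the image of some $c\in\OO_K$ (resp.\ $c\in\OO_K^\ast$) modulo $\mathfrak{m}$; these constants lift globally to $X_n$, whence $\partial(c)=0$ and $\partial^\ast(c)=1$. With that understood, your reduction to $\partial(\mathfrak{m})$ and $\partial^\ast(1+\mathfrak{m})$ is exactly right.
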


\subsubsection{}\label{def de phi} Par conséquent, $\ker(\alpha)\simeq \mathrm{coker}(\partial)$
(comme faisceaux abéliens). Comme
$\mathfrak{N}=\mathcal{I}^{n-1}/\mathcal{I}^{n}$ est un
$\OO_K$-module annulé par $p$, on en déduit que
$\mathrm{ker}(\alpha)\simeq \mathrm{coker}(\partial)$ est un
$\OO_K$-module de longueur finie tué par $p$. Ceci étant, notons
$d'$ l'ordre du faisceau inversible $\mathcal{I}|_{X_1}$, alors
pour $n\geq 2$, l'ordre de $\mathcal{I}|_{X_n}$ est de la forme
$d'p^{\ell}$. Par ailleurs, puisque $\mathcal{I}|_{X_K}\simeq
\OO_{X_K}$, d'après le lemme 6.4.4 de \cite{Raynaud}, pour $m\in
\mathbf{Z}$ assez grand, le faisceau inversible
$\mathcal{I}|_{X_m}$ est d'ordre $d$. Donc, $d=d'p^{r}$ avec
$r\geq 0$ un entier convenable. Pour $i=0,\cdots, r$, soit $m_i$
le plus petit entier $n$ tel que $\mathcal{I}|_{X_{n}}$ soit
d'ordre $d'p^{i}$. On pose aussi $\phi(n)=h^{1}(X,\OO_{X_n})$ la
longueur du $\OO_K$-module $\HH^{1}(X,\OO_{X_n})$. D'après
\ref{suite exacte de dualisant}, on a $\phi(n)\geq \phi(n-1)$. De
plus, $\phi(n)>\phi(n-1)$ si et seulement si $\omega_n\simeq
\OO_{X_n}$, auquel cas $\phi(n)=\phi(n-1)+1$. On déduit du lemme
\ref{Oort} le corollaire suivant:

\begin{corollary}\label{Pic_S} Soit $n\geq 2$ un entier. Alors ou bien
$\phi(n)=\phi(n-1)$, auquel cas le morphisme
$\alpha:\Pic(X_{n})\rightarrow \Pic(X_{n-1})$ est un isomorphisme;
ou bien $\phi(n)=\phi(n-1)+1$, auquel cas $\ker(\alpha)$ est un
$\OO_{K}$-module de longueur $1$, et donc un $k$-espace vectoriel
de dimension $1$.
\end{corollary}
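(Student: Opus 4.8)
The plan is to reduce the statement to a single length computation for $\ker(\alpha)$, using the two long exact sequences of \ref{notations} together with the dévissage of Oort (Lemma \ref{Oort}). As already recorded at the start of \ref{def de phi}, the multiplicative sequence $(\ast)$ presents $\ker(\alpha)$ as $\mathrm{coker}(\partial^{\ast})=\HH^{1}(X_n,1+\mathfrak{N})/\mathrm{im}(\partial^{\ast})$, while the additive isomorphism $\beta:\mathfrak{N}\to 1+\mathfrak{N}$ sends $\mathrm{im}(\partial)$ onto $\mathrm{im}(\partial^{\ast})$ by Lemma \ref{Oort}. First I would transport the presentation of $\ker(\alpha)$ through $\beta$ to obtain a canonical isomorphism of $\OO_{K}$-modules
$$
\ker(\alpha)\ \simeq\ \mathrm{coker}(\partial)\ =\ \HH^{1}(X_n,\mathfrak{N})/\mathrm{im}(\partial),
$$
and then observe that the right-hand side is exactly $\ker(\alpha')$, the kernel of the surjection $\alpha':\HH^{1}(X_n,\OO_{X_n})\twoheadrightarrow \HH^{1}(X_{n-1},\OO_{X_{n-1}})$ furnished by the additive long exact sequence.

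Next I would compute the length of $\ker(\alpha')$. Since $X_n$ is a closed subscheme of $X$ one has $\HH^{1}(X_n,\OO_{X_n})=\HH^{1}(X,\OO_{X_n})$, so additivity of length in
$$
0\to \ker(\alpha')\to \HH^{1}(X_n,\OO_{X_n})\xrightarrow{\ \alpha'\ }\HH^{1}(X_{n-1},\OO_{X_{n-1}})\to 0
$$
yields $\mathrm{length}_{\OO_{K}}\ker(\alpha')=\phi(n)-\phi(n-1)$. By the trichotomy of Lemma \ref{suite exacte de dualisant}, reformulated for $\phi$ in \ref{def de phi}, this difference lies in $\{0,1\}$, and equals $1$ precisely when $\omega_n\simeq \OO_{X_n}$. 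Combined with the isomorphism of the first step, $\ker(\alpha)$ has $\OO_{K}$-length $\phi(n)-\phi(n-1)\in\{0,1\}$.

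It then remains to read off the two cases. If $\phi(n)=\phi(n-1)$ then $\ker(\alpha)$ has length $0$, hence vanishes; as $\alpha$ is surjective by $(\ast)$ it is an isomorphism. If $\phi(n)=\phi(n-1)+1$ then $\ker(\alpha)$ has length $1$. To see it is a one-dimensional $k$-vector space I would note that $\mathfrak{N}=\mathcal{I}^{n-1}/\mathcal{I}^{n}$ is a module over $\OO_{X_1}=\OO_{X}/\mathcal{I}$, on which $\pi$ acts by $0$ since $\pi\OO_{X}=\mathcal{I}^{d}\subset\mathcal{I}$ (here $d\geq 1$); consequently $\mathfrak{N}$, then $\HH^{1}(X_n,\mathfrak{N})$, and finally $\ker(\alpha)$ are $k$-vector spaces, and a length-one $\OO_{K}$-module is in any case the simple module $k$. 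The only step carrying real content is the first, namely the transfer of Oort's identification through $\beta$ and the resulting comparison of $\ker(\alpha)$ with $\ker(\alpha')$; but this is exactly what Lemma \ref{Oort} supplies, so beyond the length bookkeeping no genuine obstacle remains.
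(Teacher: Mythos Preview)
Your proposal is correct and follows exactly the route the paper has in mind: the paper does not write out a proof but simply records, immediately before the corollary, the identification $\ker(\alpha)\simeq\mathrm{coker}(\partial)$ obtained from Lemma~\ref{Oort} and the fact (from \ref{suite exacte de dualisant}) that $\phi(n)-\phi(n-1)\in\{0,1\}$; your argument is precisely the length bookkeeping that turns these two ingredients into the stated dichotomy. One small quibble: what you invoke from Lemma~\ref{suite exacte de dualisant} is a dichotomy, not a trichotomy.
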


\begin{lemma}\label{ki} Gardons les notations de \ref{def de phi}.
Alors:

(i) Pour $i=0,\cdots,r$, $\omega_{m_i}$ est trivial.

(ii) Il existe un entier $k_i>0$ tel que $m_{i+1}=m_i+k_id'p^{i}$.

(iii) Les entier $n\in [m_i,m_{i+1}]$, pour lesquels on a
$\phi(n)=\phi(n-1)+1$ sont ceux de la forme $n=m_i+hd'p^{i}$ avec
$h$ entier.
\end{lemma}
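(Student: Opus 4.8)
The plan is to reduce everything to the single identity, coming from Corollary \ref{dualisant}, that expresses $\omega_n$ as a power of $\mathcal{I}$. Write $\omega_{X/S}\simeq \mathcal{I}^{n_0}$ with $0\le n_0<d$ the integer furnished by \ref{dualisant}; since $\OO_X(nD)=\mathcal{I}^{-n}$, the formula $\omega_n=(\OO_X(nD)\otimes\omega_{X/S})|_{X_n}$ gives
\[
\omega_n\simeq \mathcal{I}^{\,n_0-n}|_{X_n}.
\]
Writing $\delta_n$ for the order of $\mathcal{I}|_{X_n}$ in $\Pic(X_n)$, this yields the key criterion
\[
\omega_n\simeq\OO_{X_n}\iff \delta_n\mid (n-n_0),
\]
whose left-hand side is, by \ref{def de phi}, exactly the condition $\phi(n)=\phi(n-1)+1$. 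Recall also from \ref{def de phi} that $\delta_n=d'p^i$ on each range $m_i\le n<m_{i+1}$.

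First I would record the behaviour of the orders $\delta_n$. Since the restriction $\alpha:\Pic(X_n)\to\Pic(X_{n-1})$ sends $\mathcal{I}|_{X_n}$ to $\mathcal{I}|_{X_{n-1}}$, we have $\delta_{n-1}\mid\delta_n$; and because $\ker(\alpha)$ is killed by $p$ and has length $\le 1$ (Corollary \ref{Pic_S} and \ref{def de phi}), the element $\delta_{n-1}\cdot\mathcal{I}|_{X_n}$ lies in $\ker(\alpha)$, whence $p\delta_{n-1}\cdot\mathcal{I}|_{X_n}=0$ and $\delta_n\in\{\delta_{n-1},p\delta_{n-1}\}$. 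Thus $\delta_n$ is non-decreasing, jumps by exactly one power of $p$ at a time, and a jump at $n$ forces $\alpha$ not to be an isomorphism, hence $\phi(n)=\phi(n-1)+1$ by Corollary \ref{Pic_S}; in particular no power $d'p^i$ is skipped, so the $m_i$ are well defined and a jump occurs precisely at each of $m_1,\dots,m_r$.

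For (i), when $i\ge 1$ the order jumps at $m_i$ by definition of $m_i$, so by the previous paragraph $\phi(m_i)=\phi(m_i-1)+1$, i.e. $\omega_{m_i}\simeq\OO_{X_{m_i}}$. The case $i=0$, where $m_0=1$, I would treat by hand: $X_1$ is the divisor $D$, and adjunction together with $D\cdot D=\tfrac1{d^2}X_s\cdot X_s=0$ and $\omega_{X/S}\cdot D=\tfrac1d\sum n_i(\omega_{X/S}\cdot C_i)=0$ gives $p_a(X_1)=1$; since $\HH^0(X_1,\OO_{X_1})=k$ (Lemma \ref{lemme cohomologique}) this forces $h^0(\omega_1)=h^1(\OO_{X_1})=1$, and $\omega_1$ is invertible of degree $0$ on each $C_i$, so Lemma \ref{lemme cohomologique} applied to $L=\omega_1$ yields $\omega_1\simeq\OO_{X_1}$.

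Finally (ii) and (iii) are pure congruence bookkeeping once (i) is in hand. By the key criterion, $\omega_{m_i}\simeq\OO_{X_{m_i}}$ with $\delta_{m_i}=d'p^i$ means $m_i\equiv n_0\pmod{d'p^i}$ for every $i$. Comparing the congruences for $m_i$ and $m_{i+1}$ modulo $d'p^i$ gives $m_{i+1}-m_i\equiv 0\pmod{d'p^i}$, i.e. $m_{i+1}=m_i+k_id'p^i$ with $k_i=(m_{i+1}-m_i)/(d'p^i)>0$, which is (ii). For (iii), on the half-open range $m_i\le n<m_{i+1}$ one has $\delta_n=d'p^i$, so the criterion reads: $\omega_n$ is trivial $\iff n\equiv n_0\equiv m_i\pmod{d'p^i}\iff n=m_i+hd'p^i$; the right endpoint $n=m_{i+1}$ is trivial by (i) and already has this shape with $h=k_i$, which completes the characterization. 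The only genuinely delicate point is the order estimate $\delta_n\in\{\delta_{n-1},p\delta_{n-1}\}$ of the second paragraph, since it is what simultaneously guarantees that the $m_i$ exist and pins the jumps to the arithmetic progressions appearing in (iii).
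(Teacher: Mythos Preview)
Your argument is correct and follows essentially the same route as the paper: both hinge on the identity $\omega_n\simeq\mathcal{I}^{n_0-n}|_{X_n}$ from Corollary~\ref{dualisant} together with the observation that an order jump of $\mathcal{I}|_{X_n}$ forces $\alpha$ to be non-injective and hence $\omega_n\simeq\OO_{X_n}$ via Corollary~\ref{Pic_S} and Lemma~\ref{suite exacte de dualisant}. Your presentation is in fact slightly more careful than the paper's, in that you make explicit the estimate $\delta_n\in\{\delta_{n-1},p\delta_{n-1}\}$ (so no power $d'p^i$ is skipped and the $m_i$ are well defined) and you supply a separate adjunction argument for the base case $m_0=1$, which the paper's proof of~(i) does not treat explicitly.
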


\begin{proof} Soit $n>1$ un entier tel que l'ordre de $\mathcal{I}|_{X_n}$
soit différent de celui de $\mathcal{I}_{X_{n-1}}$, alors le
morphisme canonique $\Pic(X_n)\rightarrow \Pic(X_{n-1})$ n'est pas
un isomorphisme. Par suite, $\phi(n)=\phi(n-1)+1$, et le morphisme
$\HH^{1}(X_n,\OO_{X_n})\rightarrow \HH^{1}(X_{n-1},\OO_{X_{n-1}})$
a un noyau de longueur $1$ (corollaire \ref{Pic_S}). Reprennons la
suite exacte (\ref{exact}) de \ref{exact1}. Par dualité, le
morphisme injectif $\HH^{0}(X_{n-1},\omega_{n-1})\rightarrow
\HH^{0}(X_n,\omega_n)$ a donc un conoyau non trivial. D'où
$\omega_n\simeq \OO_{X_n}$ (lemme \ref{suite exacte de
dualisant}). Pour (ii), rappelons que
$\omega_{m_i}=\omega_{X/S}(m_iD)|_{X_{m_i}}$, et d'après
\ref{dualisant}, il existe un entier $n$ ($1\leq n\leq d-1$) tel
que $\omega_{X/S}\simeq \mathcal{I}^{n}$. Notons
$\mathcal{L}_m:=\omega_{X/S}(mD)$, alors $\mathcal{L}_m\simeq
\mathcal{I}^{n-m}$. Or
$\omega_{m_{i+1}}=\mathcal{L}_{m_{i+1}}|_{X_{m_{i+1}}}=\mathcal{I}^{n-m_{i+1}}|_{X_{m_{i+1}}}\simeq
\OO_{X_{m_{i+1}}}$. Par suite
$\omega_{m_{i+1}}|_{X_{m_i}}=\mathcal{I}^{n-m_{i+1}}|_{X_{m_i}}\simeq
\OO_{X_{m_i}}$. Puisque
$\omega_{m_i}=\mathcal{I}^{n-m_i}|_{X_{m_i}}\simeq \OO_{X_{m_i}}$,
il en résulte que $\mathcal{I}^{m_i-m_{i+1}}|_{X_{m_i}}\simeq
\OO_{X_{m_i}}$, il existe donc un entier $k_i>0$ tel que
$m_{i+1}=m_i+k_id'p^{i}$. Le même raisonnement nous donne aussi
que, pour $m_{i+1}\geq m> m_i$ un entier tel que
$\phi(m)>\phi(m-1)$, il existe un entier $0<h\leq k_i$ vérifiant
$n=m_i+hd'p^{i}$. Réciproquement, soit $m$ un entier de la forme
$m=m_i+hd'p^{i}$ (avec $0<h\leq k_i$), prouvons que
$\phi(m)>\phi(m-1)$. On peut supposer que $m<m_{i+1}$, donc
$\mathcal{I}|_{X_m}$ est d'ordre $d'p^{i}$. Compte tenu de
\ref{suite exacte de dualisant}, il suffit de montrer que
$\omega_{m}\simeq \OO_{X_m}$. Or
\begin{eqnarray*}
\omega_{m}&\simeq &
\mathcal{I}^{n-m}|_{X_{m}}=\mathcal{I}^{n-m_i-hd'p^{i}}|_{X_{m}} \\
& =& \mathcal{I}^{n-m_{i+1}+m_{i+1}-n-hd'p^{i}}|_{X_{m_i}} \\
& \simeq & \omega_{m_{i+1}}|_{X_{m}}\otimes
\mathcal{I}^{(k_0-h)d'p^{i}}|_{X_m} \\ & \simeq &\OO_{X_m}
\end{eqnarray*}
car $\omega_{m_i}\simeq \OO_{X_{m_i}}$ et $\mathcal{I}|_{X_{m}}$
est d'ordre $d'p^{i}$. Ceci finit la démonstration.
\end{proof}

\subsubsection{} \label{def de psi} Définissons la fonction
$\varphi:\mathbf{R}_{\geq 0}\rightarrow \mathbf{R}_{\geq 0}$ telle
que son graphe soit l'enveloppe concave de l'ensemble
$\{(n,\phi(n))~|~n\in \mathbf{Z}_{\geq 0}\}\subset
\mathbf{R}^{2}$. Alors $\varphi$ est une fonction continue
strictement croissante, et linéaire par morceaux. De plus,
$\varphi(0)=0$, et $\varphi(1)=1$. Notons $\psi:\mathbf{R}_{\geq
0}\rightarrow \mathbf{R}_{\geq 0}$ son inverse. Donc, $\psi$ est
encore continue, et linéaire par morceaux. Pour tout entier $n\geq
1$, $\psi(n)$ est le plus petit entier $m\geq 1$ tel que
$\phi(m)=n$. Si l'on note $d_n$ l'ordre de faisceau inversible
$\mathcal{I}|_{X_{\psi(n)}}$, alors $\psi(n+1)=\psi(n)+d_n$ (lemme
\ref{ki}), et pour tout $m\in \mathbf{Z}$ tel que $\psi(n)\leq
m<\psi(n+1)$, le morphisme de groupes $ \Pic(X_{m})\rightarrow
\Pic(X_{\psi(n)})$ est un isomorpshime (corollaire \ref{Pic_S}).
Les fonctions $\varphi,\psi:\mathbf{Z}_{\geq 1}\rightarrow
\mathbf{Z}_{\geq 1}$ nous donnent des fonctions de Herbrand, tout
à fait similaires à celles de Serre dans sa description du corps
de classes local \cite{Serre}.

\setlength{\unitlength}{0.3cm}
\begin{picture}(70,25)
\thicklines \put(1,1){\vector(1,0){24}}
\put(1,1){\vector(0,1){20}} \drawline(1,1)(2,4)(4,7)(6,10)
\dottedline{0.4}(6,10)(10,13)\drawline(10,13)(18,16)(22,17.5)

\drawline(2,1.1)(2,0.8)\drawline(4,1.1)(4,0.8)\drawline(6,1.1)(6,0.8)
\drawline(10,1.1)(10,0.8)\drawline(18,1.1)(18,0.8)
\drawline(0.8,4)(1.1,4)\drawline(0.8,7)(1.1,7)\drawline(0.8,10)(1.1,10)
\drawline(0.8,13)(1.1,13)\drawline(0.8,16)(1.1,16)

\put(0.5,0.5){\tiny{$0$}} \put(1.8,0){\tiny{$1$}}
\put(3.0,0){\tiny{$1\!\!+\!\!d_1$}}
\put(5.6,0){\tiny{$1\!\!+\!\!2d_1$}}\put(24.5,0){\tiny{$x$}}

\dottedline{0.8}(10,13)(10,1) \dottedline{0.8}(18,16)(18,1)
\put(13,7){\vector(-1,0){3}}\put(15,7){\vector(1,0){3}}\put(14,6.8){\tiny{$d$}}

\put(0.2,3.8){\tiny{$1$}}\put(0.2,6.8){\tiny{$2$}}
\put(0.2,9.8){\tiny{$3$}}\put(0.2,20.5){\tiny{$y$}}

\dottedline{0.8}(1,13)(10,13)\dottedline{0.8}(1,16)(18,16)
\put(7,15){\vector(0,1){1}}\put(7,14){\vector(0,-1){1}}\put(6.8,14.2){\tiny{1}}

\put(4,18){Graphe de $\varphi$}



\put(28,1){\vector(1,0){24}} \put(28,1){\vector(0,1){20}}
\drawline(28,1)(31,2)(34,4)(37,6)
\dottedline{0.4}(37,6)(40,10)\drawline(40,10)(43,18)(44.5,22)

\drawline(31,1.1)(31,0.8)\drawline(34,1.1)(34,0.8)\drawline(37,1.1)(37,0.8)
\drawline(40,1.1)(40,0.8)\drawline(43,1.1)(43,0.8)

\drawline(27.8,2)(28.1,2)\drawline(27.8,4)(28.1,4)\drawline(27.8,6)(28.1,6)
\drawline(27.8,10)(28.1,10)\drawline(27.8,18)(28.1,18)

\put(27.5,0.5){\tiny{$0$}} \put(30.8,0){\tiny{$1$}}
\put(33.8,0){\tiny{$2$}} \put(36.8,0){\tiny{$3
$}}\put(51.5,0){\tiny{$x$}}

\dottedline{0.8}(40,10)(40,1) \dottedline{0.8}(43,18)(43,1)
\put(41,7){\vector(-1,0){1}}\put(42,7){\vector(1,0){1}}\put(41.5,6.8){\tiny{1}}

\put(27.2,1.8){\tiny{$1$}}\put(25.9,3.8){\tiny{$1\!\!+\!\!d_1$}}
\put(25.5,5.8){\tiny{$1\!\!+\!\!2d_1$}}\put(27.2,20.5){\tiny{$y$}}

\dottedline{0.8}(28,10)(40,10)\dottedline{0.8}(28,18)(43,18)
\put(35,13){\vector(0,-1){3}}\put(35,15){\vector(0,1){3}}\put(35,14){\tiny{$d$}}

\put(31,20){Graphe de $\psi$}

\end{picture}



\subsubsection{} On termine ce numéro avec certains
corollaires de \ref{suite exacte de dualisant}, qui nous seront
utiles dans la suite. Le premier corollaire résulte directement de
\ref{suite exacte de dualisant} par dualité.

\begin{corollary}\label{lemme cohomologie pour L} Soit $L$ un faisceau inversible sur $X$, de degré
$0$ sur chaque composante de $X_1$, et soit $n$ un entier $\geq
2$. Alors on a $h^{1}(X_{n-1},L)\leq h^{1}(X_n,L)\leq
h^{1}(X_{n-1},L)+1$. De plus, $ h^{1}(X_n,L)= h^{1}(X_{n-1},L)+1$
si et seulement si $L|_{X_n}\simeq \omega_n$
\end{corollary}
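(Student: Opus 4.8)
The statement will follow by applying Grothendieck--Serre duality on each of the proper one-dimensional schemes $X_n$ and then transcribing Lemma~\ref{suite exacte de dualisant} across that duality, exactly as the phrase \emph{par dualit\'e} suggests. The plan is to first secure the duality, then read off the two inequalities, and finally pin down the case of equality.

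First I would set up the duality. Each $X_n$ is an effective Cartier divisor ($=nD$) in the regular surface $X$, hence Cohen--Macaulay and proper of dimension $1$ over the complete local ring $\OO_K$, with dualizing sheaf $\omega_n$. Since $L$ is invertible, $\mathcal{H}om(L|_{X_n},\omega_n)\simeq L^{\vee}\otimes\omega_n$, and Grothendieck duality yields a perfect pairing of finite length $\OO_K$-modules
$$
\HH^{0}(X_n,L^{\vee}\otimes\omega_n)\times \HH^{1}(X_n,L)\longrightarrow \HH^{1}(X_n,\omega_n)\longrightarrow E,
$$
where $E$ is the Matlis dualizing module of $\OO_K$. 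As Matlis duality preserves length, this gives the numerical identities
$$
h^{1}(X_n,L)=h^{0}(X_n,L^{\vee}\otimes\omega_n),\qquad h^{1}(X_{n-1},L)=h^{0}(X_{n-1},L^{\vee}\otimes\omega_{n-1}).
$$

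Next I would read off the bounds. The short exact sequence (\ref{suite de depart}), tensored by $L^{\vee}\otimes\omega_{X/S}(nD)$, gives the exact sequence (\ref{exact}) of \ref{exact1}, whence
$$
h^{0}(X_{n-1},L^{\vee}\otimes\omega_{n-1})\le h^{0}(X_n,L^{\vee}\otimes\omega_n)\le h^{0}(X_{n-1},L^{\vee}\otimes\omega_{n-1})+1,
$$
and transporting this through the duality above produces exactly $h^{1}(X_{n-1},L)\le h^{1}(X_n,L)\le h^{1}(X_{n-1},L)+1$. To decide when the upper bound is attained I would invoke Lemma~\ref{suite exacte de dualisant}, applied to $L$ (which is invertible of degree $0$ on each component of $X_1$, so its hypotheses hold). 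When $\omega_n\simeq L|_{X_n}$ the complex (\ref{exact}) is right exact with $\HH^{0}(X_1,L^{\vee}\otimes\omega_n|_D)\simeq k$, so $h^{0}(X_n,L^{\vee}\otimes\omega_n)=h^{0}(X_{n-1},L^{\vee}\otimes\omega_{n-1})+1$; otherwise the inclusion $\HH^{0}(X_{n-1},L^{\vee}\otimes\omega_{n-1})\hookrightarrow\HH^{0}(X_n,L^{\vee}\otimes\omega_n)$ is bijective and the lengths agree. Translating once more via duality, $h^{1}(X_n,L)=h^{1}(X_{n-1},L)+1$ holds precisely when $L|_{X_n}\simeq\omega_n$.

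The only genuinely delicate point is the first step: one must check that Grothendieck duality applies to the non-reduced schemes $X_n$, which are \emph{not} flat over $S$ (they are killed by a power of $\pi$), with dualizing sheaf exactly the $\omega_n$ already introduced, and that the resulting pairing is one of finite length $\OO_K$-modules, so that the equalities between $h^{1}$ and $h^{0}$ are equalities of $\OO_K$-lengths and not merely of $k$-dimensions. Once this is in place everything else is a purely formal transcription of Lemma~\ref{suite exacte de dualisant}.
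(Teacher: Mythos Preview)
Your proposal is correct and is exactly the paper's argument: the paper's proof consists solely of the phrase ``r\'esulte directement de \ref{suite exacte de dualisant} par dualit\'e'', and you have faithfully unpacked that duality (Grothendieck--Serre on the Gorenstein curve $X_n$ over $S$, with Matlis dual on the $\OO_K$-side so that lengths match) and then invoked Lemma~\ref{suite exacte de dualisant} verbatim. Your closing caveat about $X_n$ not being $S$-flat is well placed but not an obstruction: $X_n$ is a Cartier divisor in the regular surface $X$, hence Gorenstein, and $f_n^{!}\OO_S$ is computed by adjunction through $X_n\hookrightarrow X\to S$, giving precisely the sheaf $\omega_n=(\omega_{X/S}(nD))|_{X_n}$ already introduced.
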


\begin{corollary}\label{condition pour L} Soit $L$ un faisceau inversible sur $X$, de degré
$0$ sur chaque composante de $X_1$, et soit $n$ un entier $\geq
1$. Alors si la flèche $\HH^{1}(X,L)\rightarrow \HH^{1}(X_n,L)$
n'est pas bijective, il existe un entier $m>n$ tel que
$L^{\vee}\otimes \omega_m$ soit trivial sur $X_m$.
\end{corollary}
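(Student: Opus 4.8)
Le plan est de raisonner par contrapos\'ee, en ramenant l'\'enonc\'e \`a l'\'etude des applications de transition $\HH^{1}(X_m,L)\to \HH^{1}(X_{m-1},L)$. Le point de d\'epart sera l'identification
$$\HH^{1}(X,L)\simeq \varprojlim_{m}\HH^{1}(X_m,L).$$
Pour l'obtenir, je remarquerais que, puisque $\mathcal{I}^{d}=\pi\OO_X$, l'\'epaississement $X_{md}$ est d\'efini par $\pi^{m}\OO_X$, et que les $X_{md}$ forment un syst\`eme cofinal parmi les $X_m$. Le th\'eor\`eme des fonctions formelles appliqu\'e au morphisme propre $f$ et \`a l'id\'eal $\pi\OO_S$ donne alors $\HH^{1}(X,L)^{\wedge}\simeq \varprojlim_m \HH^{1}(X_{md},L)$; comme $\HH^{1}(X,L)$ est un $\OO_K$-module de type fini sur l'anneau complet $\OO_K$, il co\"incide avec son compl\'et\'e $\pi$-adique, d'o\`u l'isomorphisme voulu par cofinalit\'e. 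Sous cet isomorphisme, la fl\`eche $\HH^{1}(X,L)\to \HH^{1}(X_n,L)$ de l'\'enonc\'e s'identifie \`a la projection canonique d'indice $n$.

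Je montrerais ensuite que, pour $m\geq 2$, l'application de transition $\HH^{1}(X_m,L)\to \HH^{1}(X_{m-1},L)$ est bijective d\`es que $L|_{X_m}\ncong \omega_m$. Elle est d'abord surjective: la suite exacte $0\to (\mathcal{I}^{m-1}/\mathcal{I}^{m})\otimes L\to L|_{X_m}\to L|_{X_{m-1}}\to 0$ fournit une suite exacte longue de cohomologie dont le terme $\HH^{2}(X_m,-)$ est nul, $X_m$ \'etant de dimension $1$. D'autre part, le corollaire \ref{lemme cohomologie pour L} affirme que $h^{1}(X_m,L)=h^{1}(X_{m-1},L)$ exactement lorsque $L|_{X_m}\ncong \omega_m$. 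Une surjection de $\OO_K$-modules de longueur finie et de m\^eme longueur \'etant un isomorphisme, l'application de transition est bien bijective sous cette hypoth\`ese.

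Il resterait \`a assembler par contrapos\'ee. Supposons que $L^{\vee}\otimes \omega_m$ ne soit trivial sur aucun $X_m$ pour $m>n$, c'est-\`a-dire $L|_{X_m}\ncong \omega_m$ pour tout $m>n$. Alors toutes les fl\`eches de transition $\HH^{1}(X_m,L)\to \HH^{1}(X_{m-1},L)$ pour $m>n$ sont bijectives, et la limite projective du syst\`eme $\{\HH^{1}(X_m,L)\}_{m\geq n}$ s'identifie \`a $\HH^{1}(X_n,L)$ par la projection d'indice $n$. Comme $\{m\geq n\}$ est cofinal dans $\{m\geq 1\}$, cette limite est $\HH^{1}(X,L)$, et la fl\`eche $\HH^{1}(X,L)\to \HH^{1}(X_n,L)$ est donc bijective. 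En contraposant, si cette derni\`ere n'est pas bijective, il existe $m>n$ avec $L|_{X_m}\simeq \omega_m$, soit $L^{\vee}\otimes \omega_m$ trivial sur $X_m$.

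L'obstacle principal sera la premi\`ere \'etape: il faudra v\'erifier soigneusement que l'identification par les fonctions formelles est compatible aux fl\`eches de restriction et que la cofinalit\'e des $X_{md}$ permet effectivement de passer de la filtration $\pi$-adique \`a la filtration $\mathcal{I}$-adique. Une fois ce passage acquis, le reste n'est qu'un d\'ecompte de longueurs reposant sur le corollaire \ref{lemme cohomologie pour L}.
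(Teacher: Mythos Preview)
Your proof is correct and follows essentially the same approach as the paper: both rest on the identification $\HH^{1}(X,L)\simeq\varprojlim_m\HH^{1}(X_m,L)$ together with the characterization of when the transition map $\HH^{1}(X_m,L)\to\HH^{1}(X_{m-1},L)$ fails to be bijective. The only cosmetic differences are that the paper argues directly rather than by contraposition, and invokes duality plus the lemme \ref{suite exacte de dualisant} on $\HH^{0}$ where you invoke its already-dualized form (corollaire \ref{lemme cohomologie pour L}) on $\HH^{1}$.
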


\begin{proof} Notons que la flèche $\HH^{1}(X,L)\rightarrow
\HH^{1}(X_n,L|_{X_n})$ est surjective, et que
$\HH^{1}(X,L)=\varprojlim_{m\geq n}\HH^{1}(X_m, L|_{X_m})$. Pour
que la flèche ne soit pas bijective, il faut et il suffit qu'il
existe $m> n$ tel que $\HH^{1}(X_m, L|_{X_m})\rightarrow
\HH^{1}(X_{m-1},L|_{X_{m-1}})$ ne soit pas injectif. Par dualité,
ceci équivaut à dire que le morphisme injectif
$\HH^{0}(X_{m-1},L^{\vee}\otimes\omega_{m-1})\rightarrow
\HH^{0}(X_m, L^{\vee}\otimes\omega_{m})$ ne soit pas surjectif. On
a donc $L^{\vee}\otimes \omega_{m}\simeq \OO_{X_m}$ (\ref{etude de
dualisant}), d'où le résultat.
\end{proof}



\begin{corollary}\label{cle} Soit $L$ un faisceau inversible sur $X$
de degré $0$ sur chaque composante de $X_1$, et $n\geq 1$ un
entier. Supposons que le $\OO_K$-module $\HH^{1}(X,L)$ soit de
longueur $\geq n$, alors $L|_{X_{\psi(n)}}\simeq
\mathcal{I}^{i}|_{X_{\psi(n)}}$ avec $i$ un entier convenable.
\end{corollary}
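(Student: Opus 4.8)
The plan is to track how the length $\nu(m):=h^{1}(X_m,L)$ grows with $m$ and to identify the levels $X_{\psi(j)}$ on which $L$ restricts to a power of $\mathcal{I}$ with the jumps of $\nu$. Write $\omega_{X/S}\simeq \mathcal{I}^{c}$ with $0\le c<d$ (corollaire \ref{dualisant}), so that $\omega_m\simeq \mathcal{I}^{c-m}|_{X_m}$. By corollaire \ref{lemme cohomologie pour L}, $\nu$ is non-d\'ecroissante, cro\^it d'au plus $1$ \`a chaque pas, et v\'erifie $\nu(m)=\nu(m-1)+1$ exactement lorsque $L|_{X_m}\simeq \omega_m$. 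De plus $\HH^{1}(X,L)=\varprojlim_m \HH^{1}(X_m,L)$ \`a morphismes de transition surjectifs (cf.\ la preuve de \ref{condition pour L}); pour un syst\`eme projectif surjectif de $\OO_K$-modules de longueur finie on a donc $\mathrm{length}\,\HH^{1}(X,L)=\sup_m \nu(m)$.

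First I would carry out a block-by-block analysis. Fix $j\geq 1$ et consid\'erons l'intervalle $B_j=[\psi(j),\psi(j+1))$, dont la longueur est $d_j:=\mathrm{ord}(\mathcal{I}|_{X_{\psi(j)}})$ (\ref{def de psi}). Pour $\psi(j)\le m<\psi(j+1)$ la restriction $\Pic(X_m)\to \Pic(X_{\psi(j)})$ est un isomorphisme (corollaire \ref{Pic_S}), sous lequel $\omega_m$ correspond \`a $\mathcal{I}^{c-m}|_{X_{\psi(j)}}$; ainsi $L|_{X_m}\simeq \omega_m$ \'equivaut \`a $L|_{X_{\psi(j)}}\simeq \mathcal{I}^{c-m}|_{X_{\psi(j)}}$. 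Comme $m$ parcourt les $d_j$ entiers de $B_j$, l'exposant $c-m$ parcourt un syst\`eme complet de r\'esidus modulo $d_j$, de sorte que $\mathcal{I}^{c-m}|_{X_{\psi(j)}}$ parcourt exactement une fois les $d_j$ puissances de $\mathcal{I}|_{X_{\psi(j)}}$. Par cons\'equent $\nu$ saute au plus une fois sur $B_j$, et il saute exactement une fois si et seulement si $L|_{X_{\psi(j)}}$ est une puissance de $\mathcal{I}|_{X_{\psi(j)}}$. Soit $T$ l'ensemble des indices $j$ pour lesquels ceci a lieu.

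The decisive point is then que $T$ est un segment initial de $\mathbf{Z}_{\geq 1}$: si $L|_{X_{\psi(j)}}\simeq \mathcal{I}^{t}|_{X_{\psi(j)}}$, la restriction le long de $X_{\psi(j-1)}\hookrightarrow X_{\psi(j)}$ donne $L|_{X_{\psi(j-1)}}\simeq \mathcal{I}^{t}|_{X_{\psi(j-1)}}$, donc $j\in T$ force $j-1\in T$. La sommation des contributions des blocs se t\'elescope en $\nu(\psi(N+1)-1)=\#(T\cap[1,N])$ (avec $\nu(0)=0$, $X_0=\emptyset$), d'o\`u $\sup_m \nu(m)=\#T$ et par suite $\#T=\mathrm{length}\,\HH^{1}(X,L)$.

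To conclude, supposons $\mathrm{length}\,\HH^{1}(X,L)\geq n$. Alors $\#T\geq n$, et comme $T$ est un segment initial on obtient $\{1,\dots,n\}\subseteq T$; en particulier $n\in T$, c'est-\`a-dire $L|_{X_{\psi(n)}}\simeq \mathcal{I}^{i}|_{X_{\psi(n)}}$ pour un entier $i$ convenable, comme voulu. The step needing the most care is the block analysis: elle repose sur la co\"incidence de la longueur de bloc $d_j$ avec l'ordre de $\mathcal{I}|_{X_{\psi(j)}}$ (fournie par le lemme \ref{ki}) et sur l'isomorphie des restrictions sur $\Pic$ \`a l'int\'erieur d'un bloc, qui ensemble forcent les torsions $\mathcal{I}^{c-m}$ \`a \'epuiser exactement une fois toutes les puissances de $\mathcal{I}$.
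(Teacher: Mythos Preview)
Your argument is correct and follows essentially the same route as the paper's proof: both identify the jumps of $m\mapsto h^{1}(X_m,L)$ with the condition $L|_{X_m}\simeq\omega_m$, use that $\Pic(X_m)\to\Pic(X_{\psi(j)})$ is an isomorphism inside each block $[\psi(j),\psi(j+1))$, and deduce that each block contributes exactly one jump precisely when $L|_{X_{\psi(j)}}$ is a power of $\mathcal{I}$. Your packaging via the set $T$ and the identity $\mathrm{length}\,\HH^{1}(X,L)=\#T$ is a tidy reformulation of the paper's induction (which invokes corollaire~\ref{condition pour L} at each step rather than summing jumps directly), but the substance is the same.
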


\begin{proof} Montrons par récurrence que, sous l'hypothèse du corollaire, pour
tout $n'$ ($1\leq n'\leq n$), le $\OO_K$-module
$\HH^{1}(X_{\psi(n'+1)-1},L)$ est de longueur $n'$. Commençons par
le cas où $n'=1$. Regardons $\HH^{1}(X_1,L)$. D'après \ref{lemme
cohomologique}, ou bien ce $\OO_K$-module est de longueur $1$, ce
qui équivaut à dire que $L\simeq \OO_{X_1}$, ou bien ce
$\OO_K$-module est nul. Dans ce cas, le morphisme naturel
$$
\HH^{1}(X,L)\rightarrow \HH^{1}(X_1,L)
$$
n'est pas bijectif. Il existe donc un entier $m>1$, tel que
$L|_{X_m}\simeq \omega_{m}$ (corollaire \ref{condition pour L}),
par suite $L|_{X_1}\simeq \omega_{m}|_{X_1}\simeq
\mathcal{I}|_{X_1}^{i}$ pour $i$ un entier convenable (on peut
supposer $0\leq i<d_1$). De plus, pour $m$ un entier tel que
$1\leq m\leq \psi(2)-1 $, le morphisme canonique
$\Pic^{\circ}(X_m)\rightarrow \Pic^{\circ}(X_1)$ est bijectif
(\ref{Pic_S}), il en résulte que $L|_{X_m}\simeq
\mathcal{I}|_{X_m}^{i}$. Or $\psi(2)=\psi(1)+d_1=1+d_1$, et par
définition , $\omega_m=\omega_{X/S}(mD)|_{X_m}$, il existe donc un
unique entier $m$ tel que $1\leq m<\psi(2)-1$, et que $L\simeq
\omega_m$. Donc, en vertu de \ref{lemme cohomologie pour L}, on
trouve que le $\OO_K$-module $\HH^{1}(X_{\psi(2)-1},L)$ est de
longueur $1$. Supposons maintenant l'assertion vérifiée pour
$1\leq n'-1< n$. Sous l'hypothèse du lemme, le morphisme
$$
\HH^{1}(X,L)\rightarrow \HH^{1}(X_{\psi(n')-1},L)
$$
n'est pas bijectif. Par suite, il existe un entier $m\geq
\psi(n')$, tel que $L|_{X_m}\simeq \omega_{m}$, par conséquent,
$L|_{X_{\psi(n')}}\simeq \mathcal{I}_{X_{\psi(n')}}^{i}$ pour
$0\leq i<d_{n'}$. Or $\psi(n'+1)=\psi(n')+d_{n'}$, on en déduit
qu'il existe un unique entier $m$ tel que $\psi(n')\leq m\leq
\psi(n'+1)-1$, et que $L|_{X_{m}}\simeq \omega_m$, en particulier,
le $\OO_K$-module $\HH^{1}(X_{\psi(n'+1)-1}, L)$ est de longueur
$n'$. Ceci finit la récurrence, et le lemme en résulte aussitôt.
\end{proof}



\subsection{Etudes numériques.}\label{Etude numerique}

\subsubsection{} Gardons les notations de \ref{notations}. En
particulier, $f':X'\rightarrow S$ est le $S$-modèle propre
régulier minimal de $X_K'=\Pic^{\circ}_{X_{K}/K}$. D'après le
théorème 3.8 de \cite{LLR}, il existe un morphisme de
$\OO_K$-modules:
$$
\tau_X:\HH^{1}(X,\OO_{X})\rightarrow \HH^{1}(X',\OO_{X'})
$$
qui prolonge l'isomorphisme naturel sur la fibre générique. De
plus, son noyau est la torsion de $\HH^{1}(X,\OO_X)$, et les
$\OO_K$-modules $\ker(\tau_X)$ et $\mathrm{coker}(\tau_X)$ ont la
même longueur. Par dualité, on obtient le morphisme suivant
$$
\tau_{X}^{\vee}:\HH^{0}(X',\omega_{X'/S})\simeq
\left(\HH^{1}(X',\OO_{X'})\right)^{\vee}\rightarrow
\left(\HH^{1}(X,\OO_X)\right)^{\vee}\simeq
\HH^{0}(X,\omega_{X/S}).
$$
Or il existe un isomorphisme canonique
$f'_{\ast}\omega_{X'/S}\simeq \omega$ (\ref{Classique}), d'où le
morphisme canonique:
$$
\tau_{X}^{\vee}:\omega\simeq \HH^{0}(X',\omega_{X'/S})\rightarrow
\HH^{0}(X,\omega_{X/S})=f_{\ast}\omega_{X/S}.
$$
qui est injectif, mais avec un conoyau non trivial en général.

\subsubsection{} On déduit d'abord de l'injectivité du
morphisme $\tau_{X}^{\vee}$ que $\omega_{X/S}$ contient
$f^{\ast}\omega$, donc il existe un entier $\chi\geq 0$, tel que
$\omega_{X/S}=f^{\ast}\omega\otimes \mathcal{I}^{-\chi}$. Comme
$f_{\ast}\omega_{X/C}=\pi^{-[\chi/d]}\omega$, on obtient que
$\mathrm{coker}(\tau_{X}^{\vee})$ a pour longueur $[\chi/d]$.

\begin{prop} Gardons les notations ci-dessus. Alors
\begin{equation}\label{expression}
\chi=d\left((1-1/d))+k_0(1-1/p^{r})+\cdots+k_{r-1}(1-1/p)\right),
\end{equation}
où les $k_i$ sont les entiers définis dans \ref{ki}.
\end{prop}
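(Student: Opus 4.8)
The plan is to reduce the statement to the single identity $\chi=d\,\phi(m_r)-m_r$, after which (\ref{expression}) becomes pure bookkeeping. Indeed, by Lemma~\ref{ki}(iii) the integers $n$ with $\phi(n)=\phi(n-1)+1$ lying in $(m_i,m_{i+1}]$ are those of the form $m_i+hd'p^{i}$ with $1\le h\le k_i$, hence there are exactly $k_i$ of them; together with the jump at $n=1$ (where $\omega_1=\omega_{m_0}$ is trivial by \ref{ki}(i), since $m_0=1$, so $\phi(1)=1$ by \ref{def de phi}), one gets
$$
\phi(m_r)=1+\sum_{i=0}^{r-1}k_i,\qquad m_r=1+\sum_{i=0}^{r-1}k_id'p^{i}.
$$
Substituting into $\chi=d\,\phi(m_r)-m_r$ and using $d=d'p^{r}$, so that $d-d'p^{i}=d(1-1/p^{r-i})$ and $d-1=d(1-1/d)$, yields exactly (\ref{expression}). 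So everything rests on proving $\chi=d\,\phi(m_r)-m_r$.

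I would establish this by computing $\chi$ modulo $d$ and its integer part $[\chi/d]$ separately. For the residue, recall from \ref{dualisant} that $\omega_{X/S}\simeq\mathcal{I}^{n_0}$ with $0\le n_0<d$; comparing with $\omega_{X/S}=f^{\ast}\omega\otimes\mathcal{I}^{-\chi}$ and $f^{\ast}\omega\simeq\OO_X$ gives $\mathcal{I}^{\,n_0+\chi}\simeq\OO_X$. Since a divisor $mD$ supported on $X_s$ is principal precisely when $d\mid m$, the class of $\mathcal{I}$ in $\Pic(X)$ has order exactly $d$, whence $n_0\equiv-\chi\pmod d$. On the other hand $\omega_{m_r}=\mathcal{I}^{\,n_0-m_r}|_{X_{m_r}}$ is trivial (\ref{ki}(i)) while $\mathcal{I}|_{X_{m_r}}$ has order $d$, so $n_0\equiv m_r\pmod d$; thus $\chi\equiv-m_r\pmod d$, i.e.\ $\chi\bmod d=d\lceil m_r/d\rceil-m_r$. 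For the integer part, the discussion preceding the statement identifies $[\chi/d]$ with the length of $\mathrm{coker}(\tau_X^{\vee})$, which by \ref{Etude numerique} equals the length $\ell$ of the torsion submodule of $\HH^1(X,\OO_X)$.

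It then remains to compute $\ell$ from the Herbrand data, and this is the technical heart. Since $\pi^{t}\OO_X=\mathcal{I}^{td}$, the scheme $X_{td}$ is the base change $X\times_S\Spec(\OO_K/\pi^{t})$; as $\HH^{2}(X,\OO_X)=0$ (the fibres are curves, so $R^{2}f_{\ast}\OO_X=0$ and $S$ is affine), cohomology and base change give a canonical isomorphism
$$
\HH^{1}(X_{td},\OO_{X_{td}})\simeq \HH^{1}(X,\OO_X)\otimes_{\OO_K}\OO_K/\pi^{t}.
$$
Writing $\HH^{1}(X,\OO_X)\simeq\OO_K\oplus T$ with $T$ the torsion of length $\ell$, the right-hand side has length $t+\ell$ once $\pi^{t}$ annihilates $T$; hence $\phi(td)=t+\ell$ for $t\gg0$. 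On the other hand, for $n\ge m_r$ the sheaf $\mathcal{I}|_{X_n}$ has order $d$, so $\omega_n=\mathcal{I}^{\,n_0-n}|_{X_n}$ is trivial iff $d\mid n-n_0$; since $n_0\equiv m_r\pmod d$, Lemma~\ref{suite exacte de dualisant} shows $\phi$ jumps precisely at the integers $n\equiv m_r\pmod d$. Counting the jumps in $(m_r,td]$ gives $\phi(td)=\phi(m_r)+t-\lceil m_r/d\rceil$, and comparing the two expressions yields $\ell=\phi(m_r)-\lceil m_r/d\rceil$.

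Finally I would assemble the pieces, since $\chi\ge 0$:
$$
\chi=d\,[\chi/d]+(\chi\bmod d)=d\ell+\bigl(d\lceil m_r/d\rceil-m_r\bigr)
=d\bigl(\phi(m_r)-\lceil m_r/d\rceil\bigr)+d\lceil m_r/d\rceil-m_r=d\,\phi(m_r)-m_r,
$$
which is the identity isolated in the first paragraph, and hence gives (\ref{expression}). I expect the one delicate point to be the exactness of the asymptotic count in the third paragraph: one must check that the base-change isomorphism and the jump count agree \emph{on the nose} for $t\gg0$, with no boundary correction, rather than merely up to a bounded error. The residue computation modulo $d$ is precisely what pins down the fractional ambiguity that an asymptotic argument alone would leave open.
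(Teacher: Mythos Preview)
Your proof is correct and follows essentially the same route as the paper: both reduce to the identity $\chi=d\,\phi(m_r)-m_r$ by combining (a) the congruence $\chi+m_r\equiv 0\pmod d$ coming from the triviality of $\omega_{m_r}$ and the fact that $\mathcal{I}|_{X_{m_r}}$ has order $d$, (b) the equality $\ell(T)=[\chi/d]$ established just before the proposition, and (c) the computation of $\ell(T)$ via $\phi(td)=t+\ell(T)$ for $t\gg 0$ together with the jump count $\phi(td)=\phi(m_r)+(t-\lceil m_r/d\rceil)$. The only difference is organizational: where you split $\chi$ into its residue modulo $d$ and its integer part, the paper packages both pieces into the single integer $\alpha=(\chi+m_r)/d$ and shows directly that $\alpha=\phi(m_r)$; the content is identical, and your worry about a boundary correction in the asymptotic count is exactly what the paper's observation $\phi(m_r)=\phi(hd)$ (with $m_r=hd-a$, $0\le a<d$) settles.
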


\begin{proof} D'après \ref{ki}, on a $\phi(m_r)=1+k_0+\cdots
k_{r-1}$, et à partir de $n=m_r$, $\phi(n)=\phi(n-1)+1$ si et
seulement si $n-m_r$ est un multiple de $d=d'p^{r}$. En
particulier, si l'on pose $m_r=hd-a$ avec $0\leq a<d$, on a
$\phi(m_r)=\phi(hd)$. Posons $M=\mathrm{R}^{1}f_{\ast}\OO_X$, et
$T\subset M$ son sous-faisceau de torsion. Notons $L=M/T$, qui est
donc libre de rang $1$ sur $R$. On a
$\mathrm{R}^{1}f_{\ast}\left(\OO_{X}/\pi^{n}\OO_X\right)\simeq
\mathrm{R}^{1}f_{\ast}(\OO_{X_{nd}})=M/\pi^{n}M$. Donc pour $n\geq
h$, la longueur de $M/\pi^{n}M$ croît de $1$ avec $n$. C'est dire
que $T$ est annulé par $\pi^{h}$, et que
$\ell(M/\pi^{h}M)=\ell(T)+\ell(L/\pi^{n}L)$, d'où
\begin{equation}\label{phi(hd)}
\phi(hd)=\ell(T)+h.
\end{equation}
Or $\omega_{m_r}=\mathcal{I}^{-(\chi+m_r)}|_{X_{m_r}}$ est le
fibré inversible trivial, et comme $\mathcal{I}|_{X_{m_r}}$ est
d'ordre $d$, il existe un entier $\alpha$ tel que $\chi+m_r=\alpha
d $. D'où $\chi=(\alpha-h)d+a$, et donc on a
$\ell(T)=[\chi/d]=\alpha-h$. On déduit (en utilisant l'égalité
(\ref{phi(hd)})) que $ \ell(T)=[\chi/d]=\alpha-h=\phi(m_r)-h$.
D'où $\alpha=\phi(m_r)$, et
\begin{eqnarray*}
\chi& = & \phi(m_r)d-m_r \\ & =&
(1+k_0+\cdots+k_{r-1})d-(1+k_0d'+\cdots+k_{r-1}d'p^{r-1})
\\ &=&
d\left((1-1/d))+k_0(1-1/p^{r})+\cdots+k_{r-1}(1-1/p)\right).
\end{eqnarray*}
\end{proof}

\begin{corollary} Les conditions suivantes sont équivalentes:

(i) $X/S$ est cohomologiquement plat (en dimension $0$).

(ii) $\chi<d$.

(iii) $m_r=1$.

(iv) $\mathcal{I}|_{X_1}$ est d'ordre $d$.

\noindent De plus, si ces conditions sont réalisées, on a
$\chi=d-1$.
\end{corollary}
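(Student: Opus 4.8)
The plan is to separate the statement into its purely arithmetic part, (ii)$\Leftrightarrow$(iii)$\Leftrightarrow$(iv) together with the final value of $\chi$, and its geometric part, (i)$\Leftrightarrow$(ii); only the latter uses anything beyond the formula for $\chi$ proved just above.

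First I would dispose of the arithmetic translations. By construction $\mathcal{I}|_{X_1}$ has order $d'$ and $d=d'p^{r}$, so (iv) amounts to $d'=d$, i.e.\ to $r=0$; and since $m_0=1<m_1<\cdots<m_r$ is strictly increasing (each $m_{i+1}=m_i+k_id'p^{i}$ with $k_i>0$, by \ref{ki}), the condition $m_r=1$ is again equivalent to $r=0$, giving (iii)$\Leftrightarrow$(iv). For (ii) I would feed $r$ into the Proposition's formula, rewritten as
\[
\chi=(d-1)+d\sum_{i=0}^{r-1}k_i\bigl(1-p^{-(r-i)}\bigr).
\]
If $r=0$ the sum is empty and $\chi=d-1<d$, which simultaneously settles the final assertion. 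If $r\geq 1$, the term $i=0$ alone contributes $d\,k_0(1-p^{-r})\geq d(1-p^{-r})=d-d'$, a positive integer because $d>d'$; hence $\chi\geq(d-1)+(d-d')\geq d$. Thus $\chi<d\Leftrightarrow r=0$, and all three of (ii), (iii), (iv) are equivalent, with $\chi=d-1$ when they hold.

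The substance of the corollary is (i)$\Leftrightarrow$(ii). Here I would invoke the criterion that $X/S$ is cohomologically flat in degree $0$ if and only if the $\OO_K$-module $M:=\R^{1}f_{\ast}\OO_X$ is torsion-free, i.e.\ $T=0$ for $T\subset M$ its torsion submodule. Granting this, and using the equality $\ell(T)=[\chi/d]$ already established, one obtains at once that $X/S$ is cohomologically flat if and only if $T=0$, if and only if $[\chi/d]=0$, if and only if $\chi<d$, which is exactly (ii).

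What remains, and what I regard as the real (though classical) point, is the criterion itself. Since $f$ is proper, flat, of relative dimension $1$, the top cohomology commutes with base change, so $\HH^{1}(X_s,\OO_{X_s})\simeq M\otimes_{\OO_K}k$; flatness of $f$ keeps the Euler characteristic constant, and $X_K$ being of arithmetic genus $1$ forces $h^{0}(X_s,\OO_{X_s})=h^{1}(X_s,\OO_{X_s})=\dim_k(M/\pi M)$. Writing $M\simeq\OO_K\oplus T$ (the free rank equals $g=1$), this dimension is $1+\dim_k(T/\pi T)$. By definition $X/S$ is cohomologically flat in degree $0$ exactly when $k\to\HH^{0}(X_s,\OO_{X_s})$ is an isomorphism, that is $h^{0}(X_s,\OO_{X_s})=1$, i.e.\ $T/\pi T=0$, i.e.\ $T=0$ by Nakayama. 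Equivalently one may argue through the two-term perfect complex computing $\R f_{\ast}\OO_X$, cohomological flatness in degree $0$ being the flatness of its $H^{1}$. Once this bookkeeping is in place, the corollary follows with no further computation.
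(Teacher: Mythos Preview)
Your proof is correct and follows essentially the same route as the paper: both use $\ell(T)=[\chi/d]$ to get (i)$\Leftrightarrow$(ii), and both read the remaining equivalences and the value $\chi=d-1$ directly off the formula for $\chi$. You are somewhat more explicit than the paper in two places---you spell out why cohomological flatness is equivalent to $T=0$ via base change and constancy of the Euler characteristic, and you give an actual lower bound on the sum to show $\chi\geq d$ when $r\geq1$---whereas the paper simply asserts the first and phrases the second as ``$k_0=\cdots=k_{r-1}=0$'' (which, since each $k_i>0$, must be read as ``the list is empty''); your detour through $r=0$ for (iii)$\Leftrightarrow$(iv) is also slightly less direct than the paper's one-line appeal to the definition of $m_r$, but the content is the same.
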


\begin{proof} Le $S$-schéma $X/S$ est cohomologiquement plat si et
seulement si $T=0$, \emph{i.e.}, si et seulement si
$\ell(T)=[\chi/d]=0$, ce qui équivant à dire que $\chi<d$, d'où
(i)$\Leftrightarrow$(ii). L'équivalence de (iii) et (iv) résulte
de la définition de $m_r$. Par ailleurs, d'après l'égalité
(\ref{expression}), $\chi<d$ si et seulement si
$k_0=\cdots=k_{r-1}=0$, par suite si et seulement si
$m_0=\cdots=m_r=1$. La dernière assertion résulte directement de
l'égalité (\ref{expression}) et (iii).
\end{proof}

\begin{remark}Une fois que les conditions équivalentes
ci-dessus soient réalisées, nous dirons que le torseur $X_K$ est
\emph{modérément ramifié}; sinon, on dira que $X_K$ est
\emph{sauvagement ramifié}. Donc, en vertu de \ref{Oort}, si
$(d,p)=1$, le torseur $X_K$ est automatiquement modéré. Mais la
réciproque n'est pas vraie en général (\cite{Raynaud} remarques
9.4.3 d)).
\end{remark}

\section{Filtrations et comparaisons}

Pour tout $n\geq 1$ un entier, on a un morphisme canonique de
groupes $\Pic^{0}(X)\rightarrow \Pic^{\circ}(X_n)$. On obtient
ainsi une filtration sur les points à valeurs dans $S$ du foncteur
de Picard $\Pic^{\circ}_{X/S}(S)=\Pic^{\circ}(X)$. D'autre part,
le groupe $J(S)$ des points à valeurs dans $S$ de $J$ est
naturellement filtré par les puissances de $\pi$ (\emph{i.e.} la
filtration donnée par le morphisme canonique de groupes
$J(S)\rightarrow J(S_n)$). Le but de cette section est d'étudier
le comportement de ces deux filtrations vis-à-vis du morphisme
naturel de foncteurs $q:\Pic^{\circ}_{X/S}\rightarrow J$. Les
résultats obtenus s'énoncent agréablement en terme de réalisations
de Greenberg (\ref{resultat final}).

\subsection{Structures pro-algébriques}\label{fil}

\subsubsection{}\label{grouppro} Dans cet article, on appelle \emph{un groupe pro-algébrique sur
$k$} un pro-objet dans la catégorie des $k$-schémas en groupes de
type fini. Donc, on n'adopte pas le point de vue de Serre
(\cite{SerrePro}) des groupes pro-algébriques, où l'on travaille à
isogénie radicielle près.

\subsubsection{} Soit $n\geq 1$ un entier. Considérons
$\Gr(\Pic^{\circ}_{X_{n}/S})$ la réalisation de Greenberg du
foncteur de Picard $\Pic^{\circ}_{X_n/S}$ (\ref{Greenberg Pic}).
Le morphisme naturel de foncteurs
$\Pic^{\circ}_{X_{n+1}/S}\rightarrow \Pic^{\circ}_{X_n/S}$ induit
un morphisme de $k$-schémas en groupes lisses
$\alpha_n:\mathrm{Gr}(\Pic^{\circ}_{X_{n+1}/S})\rightarrow
\mathrm{Gr}(\Pic^{\circ}_{X_{n}/S})$. On obtient ainsi un k-groupe
pro-algébrique (au sens de \ref{grouppro})
$\{(\Gr(\Pic_{X_{n}/S}^{\circ}), \alpha_n)\}_{n\geq 1}$. De plus,
d'après le lemme suivant, qui découle de \cite{Lipman} et de
\ref{Pic_S}, on sait que ce $k$-groupe pro-algébrique est
pro-lisse.

\begin{lemma}\label{lemme en greenberg} Gardons les notations précédentes.
Alors, le morphisme $\alpha_n$ est un morphisme lisse et surjectif
de $k$-groupes lisses connexes. De plus, ou bien $\alpha_n$ est un
isomorphisme, auquel cas, on a $\phi(n+1)=\phi(n)$, ou bien
$\ker(\alpha_n)$ est un $k$-vectoriel de dimension $1$, auquel cas
on a $\phi(n+1)=\phi(n)+1$.
\end{lemma}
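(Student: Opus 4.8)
The plan is to deduce everything from Lipman's theorem (\ref{Lipman}) applied to each infinitesimal thickening $X_m$, combined with the kernel computation of \ref{notations}--\ref{Pic_S}. First I would apply \ref{Lipman} to the proper $S$-curve $X_m$ (whose image is $\{s\}$) for $m=n,n+1$: this shows that $G_m:=\Gr(\Pic^{\circ}_{X_m/S})$ is a smooth connected commutative $k$-group, of dimension equal to the length of the $W$-module $\HH^{1}(X_m,\OO_{X_m})=\HH^{1}(X,\OO_{X_m})$. Since both $W$ and $\OO_K$ have residue field $k$, this $W$-length coincides with $\dim_k\HH^{1}(X,\OO_{X_m})$ and with the $\OO_K$-length, so that $\dim G_m=\phi(m)$. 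In particular $\dim G_{n+1}-\dim G_n=\phi(n+1)-\phi(n)$, which by \ref{def de phi} lies in $\{0,1\}$.

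Next I would analyze $\alpha_n\colon G_{n+1}\to G_n$. Using the isomorphisms $\Pic^{\circ}(X_m)\cong G_m(k)$ of \ref{Lipman}, the map $\alpha_n(k)$ is identified with the restriction map $\Pic^{\circ}(X_{n+1})\to\Pic^{\circ}(X_n)$ along $X_n\hookrightarrow X_{n+1}$, whose surjectivity and kernel are governed by \ref{Pic_S}. To obtain this at the level of $k$-schemes, and not merely on $k$-points, I would apply the Greenberg--Witt base change $A\mapsto(-)\otimes_W W(A)$ to the exact sequences of \ref{notations}: the ideal $\mathfrak{N}=\mathcal{I}^{n}/\mathcal{I}^{n+1}$ is square-zero and killed by $p$, so $\beta$ identifies $1+\mathfrak{N}\cong\mathfrak{N}$ as an $\OO_{X_1}$-module, and the Oort d\'evissage \ref{Oort} identifies $\ker(\alpha_n)$ with the fpqc-sheaf quotient of $A\mapsto\HH^{1}(X_{n+1}\otimes_W W(A),\mathfrak{N}\otimes_W W(A))$ by the image of the units $\HH^{0}(X_n\otimes_W W(A),\OO^{*})$. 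Because $\mathfrak{N}$ is $k$-linear, the base change is computed by $\HH^{1}(X_{n+1},\mathfrak{N})\otimes_k A$, so $\ker(\alpha_n)$ is representable by a \emph{vector group}, in particular a smooth connected unipotent $k$-group.

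Finally I would conclude by standard facts on smooth algebraic groups over the algebraically closed field $k$. Surjectivity of $\alpha_n(k)$ (\ref{Pic_S}) shows that the scheme-theoretic image, a smooth closed subgroup of $G_n$, contains the dense set $G_n(k)$ and hence equals $G_n$; thus $\alpha_n$ is surjective. Since $\ker(\alpha_n)$ is smooth, $\alpha_n\colon G_{n+1}\to G_n$ is faithfully flat with smooth fibres, hence smooth, and $\dim\ker(\alpha_n)=\dim G_{n+1}-\dim G_n=\phi(n+1)-\phi(n)\in\{0,1\}$. If this dimension is $0$ the kernel is trivial, so $\alpha_n$ is a surjective monomorphism of smooth groups, hence an isomorphism, and $\phi(n+1)=\phi(n)$; if it is $1$ then $\ker(\alpha_n)\cong\mathbf{G}_a$, whose $k$-points form a $1$-dimensional $k$-vector space, and $\phi(n+1)=\phi(n)+1$, as asserted. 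The main obstacle is the middle step: one must check that the Greenberg realization of the kernel sheaf is an honest reduced vector group rather than merely a finite-length group functor, and this is precisely where the hypotheses $\mathfrak{N}^{2}=0$ and $p\mathfrak{N}=0$ are used to reduce the Witt-vector base change to an ordinary $k$-linear one.
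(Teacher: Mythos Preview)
Your proposal is correct and follows exactly the route the paper indicates: the paper gives no detailed argument, stating only that the lemma follows from \cite{Lipman} and \ref{Pic_S}, and your proof is a faithful expansion of that citation---Lipman's theorem (\ref{Lipman}) for smoothness, connectedness and the dimension formula $\dim G_m=\phi(m)$, together with the Oort d\'evissage of \ref{notations}--\ref{Pic_S} for the structure of $\ker(\alpha_n)$. The obstacle you flag in your final paragraph (that the Greenberg realization of the square-zero kernel is an honest vector group) is precisely what is established in Lipman's paper, so invoking \cite{Lipman} there is in line with the paper's own treatment.
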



\subsubsection{} D'autre part, à partir du $S$-schéma en groupes $J/S$,
on peut construire un $k$-groupe pro-algébriques pro-lisse
$\{(\Gr_{n}(J),\beta_n)\}_{n\geq 1}$ (\ref{Greenberg lisse}). Or
pour chaque entier $n\geq 1$, le morphisme
$q:\Pic_{X/S}^{\circ}\rightarrow J$ induit un morphisme de
foncteurs $\Pic_{X/S}^{\circ}\times_S S_n\rightarrow J\times_S
S_n$, d'où un morphisme de $k$-groupes algébriques
$$
\Gr(\Pic_{X_{nd}/S}^{\circ})\rightarrow \Gr_n(J).
$$
En particulier, on obtient un morphisme de k-groupes
pro-algébriques:
$$
\{(\Gr(\Pic_{X_{n}/S}^{\circ}), \alpha_n)\}_{n\geq 1}\rightarrow
\{(\Gr_{n}(J),\beta_n)\}_{n\geq 1}.
$$
En fait, on a un résultat plus précise (\ref{resultat final}).
Pour le démontrer, il nous faut d'abord un résultat de théorie des
intersections.

\subsection{Un résultat de théorie des intersections}\label{resultat intersection}
\begin{prop}\label{intersection} Soient $R$ un anneau de valuation discrète, $Z/R$ un
$R$-schéma lisse de type fini, à fibre spéciale $\underline{Z}$
irréductible, et notons $\xi$ le point générique de de la fibre
spéciale $\underline{Z}$ de $Z$.\footnote{D'une manière générale,
pour $Z$ un schéma sur un anneau de valuation discète, on désigne
par $\underline{Z}$ sa fibre spéciale.} Soit $\mathcal{M}$ un
faisceau cohérent de torsion sur $Z$. Supposons que $\mathcal{M}$
est de longueur $\ell$ en $\xi$.

(1) Soit $\alpha\in Z(S)$ une section de $Z/S$ tel que
$\alpha(S)\nsubseteq \mathrm{Supp}(\mathcal{M})$. Alors la
longueur du $R$-module $\alpha^{\ast}\mathcal{M}$ est $\geq \ell$.
En plus, il y a égalité si et seulement si le support de
$\mathcal{M}$ en $\alpha(s)$ est contenu dans $\underline{Z}$, et
si $\mathcal{M}$ est de Cohen-Macaulay en $\alpha(s)$.

(2) Supposons que le support schématique de $\mathcal{M}_K$ est un
diviseur effectif non trivial $H_{K}\subset Z_{K}$, notons
$H\subset Z$ son adhérence schématique dans $Z$ (qui est un
diviseur effectif relatif). Soit $\alpha:S\rightarrow Z$ une
section de $Z/S$ avec $\alpha(s)\in \underline{H}$, telle que
$\ell(\alpha^{\ast}\mathcal{M})=\ell+1$. Alors (a) $\mathcal{M}$
est de Cohen-Macaulay en $x$; (b) $H$ est régulier en $x$; (c) si
l'on note $\zeta$ le point générique de la composante irréductible
de $H$ passant par $\alpha(s)$, alors $\mathcal{M}$ est de
longueur $1$ en $\zeta$; (d) $H$ et $\alpha(S)$ se coupent
tranversalement en $\alpha(s)$.
\end{prop}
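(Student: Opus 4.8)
The plan is to localise at $x=\alpha(s)$ and to run both parts through the classical inequality relating the colength of a parameter ideal to its Hilbert--Samuel multiplicity. Set $A=\OO_{Z,x}$, a regular local ring of dimension $n+1$ with $n$ the relative dimension of $Z/R$, let $\pi$ be a uniformizer of $R$, so that $\underline Z=V(\pi)$ and $\xi$ is the height-one prime $(\pi)$, and put $N=\mathcal{M}_x$. Since $\alpha$ is a section of the smooth morphism $Z/S$, its image is regularly immersed, so $\mathfrak{p}:=\ker(A\to R)$ is generated by elements $t_1,\dots,t_n$ forming part of a regular system of parameters; as $R=A/\mathfrak{p}$ has uniformizer the image of $\pi$, we get $\mathfrak{p}+(\pi)=\mathfrak{m}_A$, i.e.\ $\mathfrak{p}$ maps onto the maximal ideal of $\bar A:=A/\pi$. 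With these identifications $\alpha^{\ast}\mathcal{M}=N/\mathfrak{p}N$, while $\ell=\ell_{A_{(\pi)}}(N_{(\pi)})$ is the coefficient of $[\underline Z]$ in the codimension-one cycle of $N$.

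For part (1), first I would dispose of the case $\dim N<n$, where $\xi\notin\mathrm{Supp}(N)$, so $\ell=0$ and $\ell_R(\alpha^{\ast}\mathcal{M})\ge 0$ is automatic. Otherwise $\dim N=n$, and $\mathfrak{p}$ is a system of parameters for $N$ (it has $n$ generators and $N/\mathfrak{p}N$ has finite length by the hypothesis $\alpha(S)\nsubseteq\mathrm{Supp}(\mathcal{M})$). The core is then the chain
\[
\ell_R(\alpha^{\ast}\mathcal{M})=\ell_R(N/\mathfrak{p}N)\ \ge\ e(\mathfrak{p};N)\ \ge\ \ell .
\]
The left inequality is the classical theorem that the colength of a parameter ideal dominates its multiplicity, with equality exactly when $N$ is Cohen--Macaulay. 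The right inequality comes from the associativity formula $e(\mathfrak{p};N)=\sum_{\mathfrak{q}}\ell_{A_{\mathfrak{q}}}(N_{\mathfrak{q}})\,e(\mathfrak{p};A/\mathfrak{q})$, the sum running over the minimal primes of $\mathrm{Supp}(N)$ of maximal dimension $n$: for $\mathfrak{q}=(\pi)$ transversality gives $e(\mathfrak{p};A/\pi)=e(\mathfrak{m}_{\bar A};\bar A)=1$, so the vertical term contributes exactly $\ell$, while the remaining (horizontal) terms are $\ge 0$. Hence equality throughout forces $N$ Cohen--Macaulay -- thus equidimensional of dimension $n$ -- and forces every horizontal divisorial component through $x$ to disappear, i.e.\ $\mathrm{Supp}_x(N)=\underline Z$; this is the asserted criterion, with ``Cohen--Macaulay'' read in the maximal, codimension-one sense (which is what makes the equality literally correct: a lower-dimensional $N$ supported in $\underline Z$ can give strict inequality).

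For part (2) I would apply the very same two inequalities and exploit that they must now be simultaneously sharp. Here $\dim N=n$ automatically, because $x\in\underline H$ forces the horizontal divisor $H$ into $\mathrm{Supp}(N)$. The associativity formula reads
\[
e(\mathfrak{p};N)=\ell+\sum_j m_j\,(H_j\cdot\alpha(S))_x,
\]
where the $H_j$ are the horizontal components of $\mathrm{Supp}(N)$ through $x$, $m_j$ the length of $\mathcal{M}$ at the generic point of $H_j$, and each $(H_j\cdot\alpha(S))_x\ge 1$; since $H$ occurs with $m_H\ge 1$ we obtain $e(\mathfrak{p};N)\ge\ell+1$. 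Together with the hypothesis $\ell_R(\alpha^{\ast}\mathcal{M})=\ell+1$, the chain collapses to $\ell_R(N/\mathfrak{p}N)=e(\mathfrak{p};N)=\ell+1$. Sharpness of the left inequality gives (a): $N$ is Cohen--Macaulay at $x$. Sharpness of the right one gives $\sum_j m_j(H_j\cdot\alpha(S))_x=1$, so there is a single horizontal component $H$ through $x$ with $m_H=1$ -- which is (c), that $\mathcal{M}$ has length $1$ at $\zeta$ -- and with $(H\cdot\alpha(S))_x=1$. Writing $I_H=(h)$, a local equation of $H$ (available since $Z$ is regular, hence factorial), the last equality says $\ell_A(A/(h,t_1,\dots,t_n))=1$, i.e.\ $h,t_1,\dots,t_n$ is a regular system of parameters; this yields simultaneously (b), that $H=V(h)$ is regular at $x$, and (d), that $H$ and $\alpha(S)=V(\mathfrak{p})$ meet transversally.

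The substantive input -- and the main obstacle -- is the colength--multiplicity inequality together with its Cohen--Macaulay equality case; everything else (the associativity formula, the transversality computation $e(\mathfrak{p};A/\pi)=1$, and the reading of the intersection numbers $(H_j\cdot\alpha(S))_x$) is bookkeeping. A secondary point to get right is the case analysis on $\dim N$ and the precise sense of ``Cohen--Macaulay'' that makes the equality statement in (1) hold as stated.
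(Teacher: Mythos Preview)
Your proof is correct and takes a genuinely different route from the paper's. The paper proceeds by induction on the relative dimension $d$: it first proves a dimension-$2$ lemma (Lemme~\ref{lemme technique}) by filtering $\mathcal{M}$ at the generic point of its support and tracking lengths after cutting by one regular parameter, then slices by $f_1,\dots,f_d$ one at a time. For part~(2) this forces an auxiliary \'etale base change to make the residue field infinite, so that a generic hyperplane $f_1$ can be chosen transverse to $\underline{H}$.

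Your approach replaces this induction by the single pair of inequalities
\[
\ell_R(N/\mathfrak{p}N)\ \ge\ e(\mathfrak{p};N)\ =\ \ell+\sum_j m_j\,e(\mathfrak{p};A/\mathfrak{q}_j)\ \ge\ \ell,
\]
invoking the colength--multiplicity bound (with its Cohen--Macaulay equality case) and the associativity formula as black boxes. This is more conceptual and shorter; in particular it handles~(2) without any base change. The paper's argument, by contrast, is more self-contained: Lemme~\ref{lemme technique} is proved from scratch with explicit filtrations, so no multiplicity theory is assumed. One small point you leave implicit in~(2): passing from $e(\mathfrak{p};A/(h))=1$ to $\ell\bigl(A/(h,t_1,\dots,t_n)\bigr)=1$ uses that $A/(h)$ is Cohen--Macaulay, which is immediate since $A$ is regular and $h$ is a nonzerodivisor. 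Your remark that the equality clause in~(1) should be read with $\mathcal{M}$ Cohen--Macaulay of the maximal dimension $n$ is well taken; the paper's formulation is tacitly in that regime.
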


Pour la démontrer, on utilise le lemme suivant:

\begin{lemma}\label{lemme technique} Soient $Z=\mathrm{Spec}(A)$ un schéma noethérien local
régulier de dimension $2$, $\mathcal{M}$ un $\OO_Z$-module
cohérent de torsion tel que $\mathrm{Supp}(\mathcal{M})$ soit de
dimension $1$. Soient $H_1,\cdots, H_n$ les composantes
irréductibles réduites de $\mathrm{Supp}(M)$. Notons $\xi_i$ le
point générique de $H_i$, et $\ell_i$ la longueur de
$\mathcal{M}_{\xi_{i}}$ sur $\OO_{Z,\xi_i}$. Soit enfin $f\in A$
qui fait partie d'un système régulier de paramètres de $A$, tel
que $Z_1:=V(f)\subset Z$ ne soit pas contenu dans
$\mathrm{Supp}(M)$. Alors $\ell(\mathcal{M}/f\mathcal{M})\geq
\sum_{i=1}^{n}\ell_i$, avec égalité si et seulement si les
conditions suivantes soient remplies: (i) Pour chaque $i$, le
schéma $H_i$ est régulier, et coupe transversalement $Z_1$ dans
$Z$; (ii) Le $\OO_Z$-module $\mathcal{M}$ est de Cohen-Macaulay.
\end{lemma}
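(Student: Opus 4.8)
The plan is to reduce the whole statement to the additivity of a single Euler-characteristic-type invariant attached to multiplication by $f$, followed by a dévissage of $\mathcal{M}$ into prime quotients. First I would record the finiteness that makes everything meaningful. Since $Z_1=V(f)$ contains none of the components $H_i$ of $\mathrm{Supp}(\mathcal{M})$, the set $\mathrm{Supp}(\mathcal{M})\cap Z_1$ is $0$-dimensional, hence reduced to the closed point; so $\mathcal{M}/f\mathcal{M}$ is of finite length. For the same reason $f$ acts invertibly on $\mathcal{M}_{\xi_i}$, so the submodule $\mathcal{M}[f]:=\ker(f\colon\mathcal{M}\to\mathcal{M})$ is also supported at the closed point and of finite length. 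I therefore set $\chi_f(\mathcal{M}):=\ell(\mathcal{M}/f\mathcal{M})-\ell(\mathcal{M}[f])$.

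The key formal step is that $\chi_f$ is additive on short exact sequences. Applying the snake lemma to multiplication by $f$ on a short exact sequence $0\to\mathcal{M}'\to\mathcal{M}\to\mathcal{M}''\to 0$ produces the six-term exact sequence $0\to\mathcal{M}'[f]\to\mathcal{M}[f]\to\mathcal{M}''[f]\to\mathcal{M}'/f\mathcal{M}'\to\mathcal{M}/f\mathcal{M}\to\mathcal{M}''/f\mathcal{M}''\to 0$, all of whose terms are of finite length; taking the alternating sum of lengths gives $\chi_f(\mathcal{M})=\chi_f(\mathcal{M}')+\chi_f(\mathcal{M}'')$.

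Next I would pick a filtration $0=\mathcal{M}_0\subset\cdots\subset\mathcal{M}_N=\mathcal{M}$ with quotients $\mathcal{M}_j/\mathcal{M}_{j-1}\cong A/\mathfrak{q}_j$. Since $\mathrm{Supp}(\mathcal{M})$ is one-dimensional and $\dim A=2$, the primes $\mathfrak{q}_j$ lie among the height-one primes $\mathfrak{p}_i$ (the generic points $\xi_i$ of the $H_i$) and the maximal ideal $\mathfrak{m}$, and localizing at $\xi_i$ shows that $A/\mathfrak{p}_i$ occurs exactly $\ell_i$ times. A direct computation gives $\chi_f(A/\mathfrak{p}_i)=\ell\bigl(A/(\mathfrak{p}_i+fA)\bigr)$, the local intersection number of $Z_1$ and $H_i$ (the kernel term vanishes because $f$ is a nonzerodivisor on the domain $A/\mathfrak{p}_i$), while $\chi_f(A/\mathfrak{m})=0$ since $f\in\mathfrak{m}$ forces both $k/fk$ and $k[f]$ to equal $k=A/\mathfrak{m}$. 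Additivity then yields $\chi_f(\mathcal{M})=\sum_i\ell_i\,\ell\bigl(A/(\mathfrak{p}_i+fA)\bigr)$, hence $\ell(\mathcal{M}/f\mathcal{M})=\sum_i\ell_i\,\ell\bigl(A/(\mathfrak{p}_i+fA)\bigr)+\ell(\mathcal{M}[f])$.

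The inequality is now immediate, since each local intersection number is $\geq 1$ and $\ell(\mathcal{M}[f])\geq 0$. The equality case is the point on which I expect the real work to concentrate, and I would treat its two summands separately. The term $\ell(\mathcal{M}[f])=0$ means $f$ is a nonzerodivisor on $\mathcal{M}$; as $f\in\mathfrak{m}\setminus\bigcup_i\mathfrak{p}_i$, this is equivalent to $\mathfrak{m}\notin\mathrm{Ass}(\mathcal{M})$, i.e. to the absence of an embedded point, which for a module of dimension $1$ over the regular local ring $A$ of dimension $2$ is exactly condition (ii) that $\mathcal{M}$ be Cohen--Macaulay. Separately, writing $\mathfrak{p}_i=(g_i)$ (possible since $A$ is regular, hence factorial), the condition $\ell\bigl(A/(f,g_i)\bigr)=1$ is equivalent to $(f,g_i)$ being a regular system of parameters, that is, to $H_i$ being regular at the closed point and meeting $Z_1$ transversally, which is condition (i). Combining the two equivalences shows that equality holds if and only if (i) and (ii) both hold, completing the proof.
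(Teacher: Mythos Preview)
Your argument is correct and is genuinely different from the paper's. The paper proceeds by induction on the number $n$ of components: for $n=1$ it pulls back a composition series of $\mathcal{M}_{\xi}$ to a filtration $\mathcal{M}_0\subset\cdots\subset\mathcal{M}_\ell=\mathcal{M}$ of $\mathcal{M}$ itself, checks that the induced filtration on $\mathcal{M}/f\mathcal{M}$ has successive quotients exactly $C_i/fC_i$ (using that each $\mathcal{M}/\mathcal{M}_i$ embeds in its localisation at $\xi$, so $f$ is a nonzerodivisor on it), and then analyses by hand when each $C_i/fC_i$ has length~$1$; for $n>1$ it peels off one component at a time via the submodule of sections supported on $H_1$. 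Your route instead introduces the additive invariant $\chi_f(\mathcal{M})=\ell(\mathcal{M}/f\mathcal{M})-\ell(\mathcal{M}[f])$ and computes it on a prime filtration, obtaining the closed formula
\[
\ell(\mathcal{M}/f\mathcal{M})=\sum_i \ell_i\,\ell\bigl(A/(\mathfrak{p}_i+fA)\bigr)+\ell(\mathcal{M}[f]).
\]
This is the standard intersection-multiplicity approach; it is cleaner in that the two obstructions to equality (embedded torsion, measured by $\ell(\mathcal{M}[f])$, and excess intersection, measured by $\ell(A/(\mathfrak{p}_i+fA))-1$) appear as separate nonnegative summands, so the equivalence with conditions (i) and (ii) is immediate. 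The paper's approach, by contrast, avoids introducing $\chi_f$ and stays closer to the concrete filtration, at the cost of a longer case analysis; both are perfectly standard.
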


\begin{proof} On raisonne par récurrence sur $n$.
Commençons par le cas où $n=1$. Notons $\xi=\xi_1$ le point
générique de $\mathrm{Supp}(M)$, et $\ell=\ell_1$ la longueur de
$\mathcal{M}$ en $\xi$. Donc, $\mathcal{M}_{\xi}$ possède une
filtration par des sous-$\OO_{Z,\xi}$-modules:
$$
0=\mathcal{M}_{\xi,0}\subset
\mathcal{M}_{\xi,1}\subset\cdots\subset
\mathcal{M}_{\xi,\ell}=\mathcal{M}_{\xi},
$$
où les quotients successifs sont isomorphes à $k(\xi)$.
Définissons $\mathcal{M}_{i}$ comme l'image réciproque de
$\mathcal{M}_{\xi,i}$ par le morphisme canonique
$\mathcal{M}\rightarrow \mathcal{M}_{\xi}$, on obtient ainsi une
filtration de $\mathcal{M}$:
$$
0=\mathcal{M}_{0}\subset \mathcal{M}_{1}\subset\cdots\subset
\mathcal{M}_{\ell}=\mathcal{M},
$$
et donc une filtration de $\mathcal{M}/f\mathcal{M}$ (remarquons
que $C_{i}:=\mathcal{M}_{i}/\mathcal{M}_{i-1}$ est non nul, et il
est sans composantes immergées dès que $i\geq 2$):
$$
0=\mathcal{M}_{0}/f\mathcal{M}_0\subset
\mathcal{M}_{1}/f\mathcal{M}_1\subset\cdots\subset
\mathcal{M}_{\ell}/f\mathcal{M}_{\ell}=\mathcal{M}/f\mathcal{M},
$$
où les quotients successifs sont isomorphes à $C_i/fC_i\neq 0$.
Par suite $\ell(\mathcal{M}/f\mathcal{M})\geq \ell$. De plus,
$\ell(\mathcal{M}/f\mathcal{M})=\ell$, si et seulement si pour
chaque $i$ ($1\leq i\leq \ell$), le $\OO_{Z}$-module $C_i/fC_i$
est de longueur $1$ sur $\mathcal{O}_Z/f\OO_Z$. Donc, il suffit de
prouver que cette dernière condition équivaut à dire que $C_i$ est
de Cohen-Macaulay, à support schématique régulier coupant
transversalement $V(f)\hookrightarrow Z$. En effet, supposons
$\mathrm{Ann}(C_i)=(g_i)\subset A$, et soit $c\in C_i$ tel que
$c\notin fC_i$, alors le morphisme $\OO_Z/g_i\OO_Z\rightarrow C_i$
donné par $\overline{\lambda}\mapsto \lambda c$ (où $\lambda\in
\OO_Z$ est un relèvement de $\overline{\lambda}$ dans $\OO_Z$) est
un isomorphisme. Par suite, $\OO_{Z}/(g_i,f)$ est de longueur $1$
sur $\OO_{Z}/f\OO_Z$. Donc
$\mathrm{Supp}(C_i)=\mathrm{Supp}(M)_{\mathrm{red}}$ est régulier,
qui coupe transversalement $V(f)\hookrightarrow Z$. Ceci finit la
preuve dans le cas où $n=1$. Supposons maintenant l'assertion
vérifiée pour $n-1\geq 1$. Soit $\mathcal{M}'\subset \mathcal{M}$
formé des $m\in \mathrm{M}$, tel que $\xi_i\notin
\mathrm{Supp}(m)$ pour chaque $i\geq 2$, et définissons
$\mathcal{M}''$ par la suite exacte suivante:
$$
0\rightarrow \mathcal{M}'\rightarrow \mathcal{M}\rightarrow
\mathcal{M}''\rightarrow 0.
$$
Alors $\mathcal{M}''$ est sans composantes immergées, et est à
support $\cup_{i=2}^{n}H_i$. On en déduit la suite exacte suivante
(car $V(f)\nsubseteq \mathrm{Supp}(\mathcal{M})$):
$$
0\rightarrow \mathcal{M}'/f\mathcal{M}'\rightarrow
\mathcal{M}/\mathcal{M}\rightarrow
\mathcal{M}''/f\mathcal{M}''\rightarrow 0.
$$
Donc, on a
$\ell(\mathcal{M}/f\mathcal{M})=\ell(\mathcal{M}'/f\mathcal{M}')+
\ell(\mathcal{M}''/f\mathcal{M}'')$. Le lemme en résulte aussitôt.
\end{proof}

\begin{proof}[Démonstration de \ref{intersection}] Posons
$x=\alpha(s)$. Il existe des éléments $f_1,\cdots,f_d$ de l'idéal
maximal de $\OO_{Z,x}$, qui engendrent l'idéal maximal de
$\OO_{\underline{Z},x}$, et tel que $\alpha(S)=V(f_1,\cdots,
f_d)\hookrightarrow Z$. Quitte à remplacer $Z$ par son localisé en
$x:=\alpha(s)$, on peut supposer $Z$ local de dimension $d+1$. Le
cas où $d=0$ est trivial, et le cas où $d= 1$ résulte directement
du lemme \ref{lemme technique}. On suppose désormais $d\geq 2$, et
on raisonne par récurrence sur $d$. Notons $Z_1\hookrightarrow Z$
le sous-schéma fermé défini par l'équation $f_1=0$,
$\mathcal{M}_1$ l'image réciproque de $\mathcal{M}$ sur $Z_1$.
Alors $Z_1$ est un schéma régulier local de dimension
$\mathrm{dim}(Z)-1$. Notons $\xi_1$ le point générique de
$\underline{Z_1}$. Le morphisme $\alpha:S\rightarrow Z$ se
factorise à travers $Z_1\hookrightarrow Z$, et on désigne par
$\alpha_1:S\rightarrow Z_1$ le morphisme ainsi obtenu.  Pour
démontrer la première assertion de (1), il suffit de vérifier que
$\mathcal{M}_1$ est de longueur $\geq \ell$ en $\xi_1$. Pour cela,
quitte à remplacer $Z$ par son localisé en $\xi_1$, on est amené
au cas où $d=1$, d'où l'assertion (\ref{lemme technique}).
Supposons maintenant
$\ell(\alpha^{\ast}\mathcal{M})=\ell(\alpha_{1}^{\ast}\mathcal{M}_1)=\ell$.
Donc, en vertu de l'hypothèse de récurrence, ceci équivaut à dire
que $\mathcal{M}_1$ est de longueur $\ell$ en $\xi_1$, et que
$\mathcal{M}_1$ est de Cohen-Macaulay, à support dans
$\underline{Z_1}$. Autrement dit, $\mathcal{M}$ est de
Cohen-Macaulay, et à support dans $\underline{Z}$. D'où (1). Pour
démontrer (2), comme c'est une question locale pour la topologie
étale sur $S$, on peut supposer $S$ strictement local, de sorte
que le corps résiduel $k(s)$ de $S$ est un corps infini. Il en
résulte que le corps résiduel $k(x)$ de $Z$ en $x$ est aussi
infini. Comme $k(x)$ est un corps infini, quitte à remplacer $f_1$
par $f_1+\lambda f_2$ avec $\lambda\in \OO_{Z,x}^{\ast}$ un
élément convenable, on peut supposer $\underline{Z_1}\nsubseteq
\underline{H}$, de sort que $\underline{H_1}=\underline{H}\cap
\underline{Z_1}\hookrightarrow \underline{Z_1}$ est de codimension
$1$ dans $\underline{Z_1}$ (où $H_1:=H\cap Z_1$). Puisque
$\ell(\alpha^{\ast}\mathcal{M})=\ell(\alpha_{1}^{\ast}\mathcal{M}_1)=\ell+1$,
par l'hypothèse de récurrence, on a (i) $H_{1,\mathrm{red}}$ est
irréductible et régulier, de plus, $\alpha_1(S)$ et $H_1$ se
coupent transversalement dans $Z_1$; (ii) $\mathcal{M}_1$ est de
Cohen-Macaulay dans $Z_1$, et si l'on note $\zeta_1\in H_1$ le
point générique de $H_1$, alors $\mathcal{M}_1$ est de longueur
$1$ en $\zeta_1$. Notons ensuite $Z'$ le localisé de $Z$ en
$\zeta_1$, et $\mathcal{M}'$ l'image réciproque de $\mathcal{M}$
par le morphisme canonique $Z'\rightarrow Z$, par suite
$\mathcal{M}'/f_1\mathcal{M}'$ est de longueur $1$ sur
$\OO_{Z'}/f_1\OO_{Z'}$. Donc, le lemme \ref{lemme technique}
implique que $H$ est régulier en $\zeta_1$, coupant
tranversalement $Z_1$ en $\zeta_1$. De plus, $\mathcal{M}$ est de
Cohen-Macaulay, à support contenu dans $H$ en $\zeta_1$. Par
conséquent, $H_1$ est réduit. Compte tenu de (i) ci-dessus, on en
déduit que $H$ est irréductible et régulier, coupant
tranversalement $Z_1$ dans $Z$. Il reste à vérifier que
$\mathcal{M}$ est de Cohen-Macaulay. Par (ii), il suffit de
prouver que $\mathcal{M}$ est sans composantes immergées. Notons
$\mathcal{M}'$ le plus grand qotient sans composantes immergées de
$\mathcal{M}$, et définissons $\mathcal{N}$ le sous-$\OO_Z$-module
par la suite exacte suivante:
$$
0\rightarrow \mathcal{N}\rightarrow \mathcal{M}\rightarrow
\mathcal{M}'\rightarrow 0.
$$
Alors le $\OO_{Z}$-module $\mathcal{M}'$ satisfait les hypothèses
de la proposition, donc, d'après ce qui précède,
$\mathcal{M}'/f_1\mathcal{M}'$ est de Cohen-Macaulay, avec
$\ell(\alpha^{\ast}\mathcal{M}')=\ell+1$. On a ainsi une suite
exacte
$$
0\rightarrow \alpha^{\ast}\mathcal{N}\rightarrow
\alpha^{\ast}\mathcal{M}\rightarrow
\alpha^{\ast}\mathcal{M}'\rightarrow 0.
$$
D'où $\alpha^{\ast}\mathcal{N}=0$ pour la raison de longueur. Par
suite $\mathcal{N}=0$. Ceci achève la preuve.
\end{proof}

\subsection{Comparaison de structures pro-algébriques}

\subsubsection{} Le but de cette sous-section est de démontrer que
le morphisme canonique de foncteurs
$q:\Pic^{\circ}_{X/S}\rightarrow J$ induit, pour chaque $n\geq 1$,
un morphisme de $k$-groupes algébriques lisses
$q_n:\mathrm{Gr}(\Pic^{\circ}_{X_{\psi(n)}/S})\rightarrow
\mathrm{Gr}_n(J)$, rendant commutatif le diagramme évident.

Soit $n\geq 1 $ un entier. Il existe alors un morphisme de
foncteurs $\Pic^{\circ}_{X_{nd}/S_n}\rightarrow J\times_S S_n$ (où
$S_n=\mathrm{Spec}(\OO_{K}/\pi^{n})$), d'où un morphisme de
$k$-schémas en groupes lisses
\begin{equation}\label{mor}
\mathrm{Gr}(\Pic^{\circ}_{X_{nd}/S})\rightarrow \mathrm{Gr}_{n}(J)
\end{equation}
Puisque $\psi(n)\!\!\leq\!\! nd$ (\ref{def de psi}), il y a un
morphisme canonique de $k$-groupes algébriques lisses
$$
\Gr(\Pic^{\circ}_{X_{nd}/S})\rightarrow
\Gr(\Pic^{\circ}_{X_{\psi(n)}/S}).
$$
Donc, pour démontrer
l'assertion ci-dessus, il suffit de vérifier que le morphisme
(\ref{mor}) ci-dessus se factorise à travers le morphisme
canonique $\mathrm{Gr}(\Pic^{\circ}_{X_{nd}/S})\rightarrow
\mathrm{Gr}(\Pic^{\circ}_{X_{\psi(n)}/S})$:
$$
\xymatrix{\mathrm{Gr}(\Pic^{\circ}_{X_{nd}/S})\ar[r]\ar[d]&
\mathrm{Gr}_{n}(J)\\
\mathrm{Gr}(\Pic^{\circ}_{X_{\psi(n)}/S})\ar@{.>}[ru]& }
$$
D'autre part, comme le morphisme de $k$-groupes algébriques
$\mathrm{Gr}(\Pic^{\circ}_{X_{nd}/S})\rightarrow
\mathrm{Gr}(\Pic^{\circ}_{X_{\psi(n)}/S})$ est à noyau lisse
(\ref{lemme en greenberg}), il suffit de le vérifier au niveau des
$k$-points rationnels (rappelons que $k=\overline{k}$ est
algébriquement clos). Donc, on est amené à montrer que le
morphisme canonique (sur les points à valeurs dans $S$)
$\Pic^{\circ}_{X/S}(S)=\Pic^{\circ}(X)\rightarrow J(S_n)$ se
factorise à travers le morphisme canonique de groupes abstraits
$\Pic^{\circ}(X)\rightarrow \Pic^{\circ}(X_{\psi(n)})$ (on notera
encore par $q_n$ le morphisme de groupes abstraits ainsi obtenu):
$$
\xymatrix{\Pic^{\circ}(X)\ar[r]\ar[d]_{q}&
\Pic^{\circ}(X_{\psi(n)})\ar@{.>}[d]^{q_n}\\ J(S)\ar[r]& J(S_n)}.
$$
On va établir cette factorisation à l'aide du foncteur de Picard
rigidifié (rappelé dans $\S$\ref{recall on Picard}).

\subsubsection{}\label{dilatation}Dans la suite, on utilisera
la notion de {\textquotedblleft dilatation\textquotedblright} d'un
schéma en groupes. Rappelons brièvement cette construction (3.2 de
\cite{BLR}). Soit $R$ un anneau de valuation discrète de corps
résiduel $\kappa$, avec $\pi\in R$ une uniformisante. Soient $H/R$
un $R$-schéma en groupes lisse de type fini, $W\hookrightarrow
\underline{H}$ un sous-schéma en groupes \emph{lisse} sur
$\kappa$. Notons $\mathcal{J}$ l'idéal de définition de
$W\hookrightarrow H$. Posons $\mathrm{Bl}_{W}(H)$ l'éclatement de
$H$ le long du centre $W\hookrightarrow H$. Alors, par définition
(\cite{BLR} 3.2), \emph{la dilatation de $H$ le long du centre
$W\hookrightarrow H$} est le plus grand ouvert $H'\subset
\mathrm{Bl}_W(H)$ tel que l'idéal $\mathcal{J}\OO_{H'}\subset
\OO_{H'}$ soit engendré par $\pi$. D'après \cite{BLR} 3.2/3, $H'$
est un $R$-schéma en groupes \emph{lisse}, vérifiant la propriété
universelle suivante (\cite{BLR} 3.2/1): soient $Z/R$ un
$R$-schéma \emph{plat}, et $v:Z\rightarrow H$ un morphisme de
$R$-schémas tel que sa fibre spéciale
$\underline{v}:\underline{Z}\rightarrow \underline{H}$ se
factorise à travers $W\hookrightarrow \underline{H}$, alors il y
existe un unique morphisme $v':Z\rightarrow H'$ rendant commutatif
le diagramme évident.

\subsubsection{}\label{notation} On va utiliser les notations
suivantes. Pour $n\leq m$ deux entiers $\in \mathbf{Z}_{\geq
0}\cup \{\infty\}$, notons $\mathrm{P}^{[n,m]}$ le noyau du
morphisme canonique de foncteurs $\Pic^{\circ}_{X_m/S}\rightarrow
\Pic^{\circ}_{X_n/S}$. Ici, par convention, on note
$X_{\infty}=X$, et $\Pic^{\circ}_{X_0/S}=\{0\}$ (l'objet final de
la catégorie des faisceaux abéliens fppf sur $S$). Pour faciliter
les notations, posons
$$
\PP^{[n]}:=\PP^{[n,\infty]}=\ker(\Pic^{\circ}_{X/S}\rightarrow
\Pic^{\circ}_{X_n/S}), ~~~~~~~~
\PP_{[n]}:=\PP^{[0,n]}=\Pic^{\circ}_{X_n/S}.
$$
Pour chaque entier $n\geq 1$, on définit par récurrence un
$S$-schéma en groupes lisse $J^{[n]}$ comme la dilatation
(\ref{dilatation}) de $J^{[n-1]}$ le long de l'élément neutre de
$\underline{J^{[n-1]}}$ (ici, $J^{[0]}:=J$). En vertu de la
propriété universelle des dilatations (3.2/1 de \cite{BLR}), pour
chaque $n\in \mathbf{Z}_{\geq 0}$, on a la suite exacte suivante:
$$
0\rightarrow J^{[n]}(S)\rightarrow J(S)\rightarrow
J(S_n)\rightarrow 0.
$$
D'où un diagramme commutatif à lignes exactes:
$$
\xymatrix{0\ar[r]& J^{[n]}(S)\ar[r]\ar[d]& J(S)\ar[r]\ar@{=}[d]&
J(S_{n})\ar[r]\ar[d]& 0 \\ 0\ar[r]& J^{[n-1]}(S)\ar[r]&
J(S)\ar[r]& J(S_{n-1})\ar[r]& 0}
$$
Par conséquent, le morphisme canonique de groupes abstraits
\begin{equation}\label{iso}
J^{[n-1]}(S_1)\simeq \mathrm{coker}(J^{[n]}(S)\rightarrow
J^{[n-1]}(S))\rightarrow \ker(J(S_{n})\rightarrow J(S_{n-1}))
\end{equation}
est un isomorphisme.

\subsubsection{}\label{Not} Fixons une fois pour toutes
un rigidificateur $Y\hookrightarrow X$ pour le
foncteur de Picard relatif $\Pic_{X/S}$ (\ref{rigidificateur}), et
pour simplifier les notations, désignons par
$G=(\Pic_{X/S},Y)^{\circ}$ la composante neutre du foncteur de
Picard relatif de $X/S$, rigidifié le long $Y/S$ (\ref{def de Q}),
et par $J$ la composante neutre du $S$-modèle de Néron de
$\Pic^{\circ}_{X_K/K}$. D'après la proposition 3.2 de \cite{LLR},
$G$ est représentable par un $S$-schéma en groupes lisse séparé.
Considérons le morphisme canonique de $S$-schémas en groupes $ r:
G=(\Pic_{X/S},Y)^{\circ}\rightarrow \Pic^{\circ}_{X/S}$ (rappelé
dans $\S$\ref{presentation de P}), qui est surjectif pour la
topologie étale. Comme $S$ est strictement local, le morphisme $r$
induit une surjection sur les $S$-points (encore noté par $r$):
$$
r:G(S)\rightarrow \Pic^{\circ}(X).
$$
Notons $N=\overline{\ker(r_{K})}$ l'adhérence schématique de
$\ker(r_{K})\subset G_{K}$ dans $G$. C'est un $S$-schéma en
groupes plat de type fini, qui est aussi le noyau du morphisme
canonique $\theta: G\rightarrow J$ ($=$ composé de $r:G\rightarrow
\Pic^{\circ}_{X/S}$ et de l'épimorphisme fppf
$\Pic^{\circ}_{X/S}\rightarrow J$). On fixe aussi $\mathfrak{L}$
un faisceau de Poincaré (rigidifié) sur $X\times G$. Pour chaque
$n\in \mathbf{Z}_{\geq 1}$, notons
$$
r_n:G(S)\rightarrow \Pic^{\circ}(X)\rightarrow \Pic^{\circ}(X_n)
$$
le morphisme canonique de groupes abstraits, qui est donné par
$\varepsilon \in G(S) \mapsto \mathcal{L}_{\varepsilon}|_{X_n}\in
\Pic^{\circ}(X_n)$, où
$\mathcal{L}_{\varepsilon}:=(\mathrm{id}_{X}\times
\varepsilon)^{\ast}\mathfrak{L}$.


\subsubsection{} Notons
$$
p_G:X\times_S G\rightarrow G
$$
la projection de $X\times_S G$ sur le deuxième facteur.
Considérons $\mathrm{R}p_{G,\ast}\mathfrak{L}$. C'est un complexe
parfait d'amplitude parfaite contenue dans $[0,1]$. Donc,
localement pour la topologie Zariski de $G$, $\R
p_{G,\ast}\mathcal{P}$ peut se représenter par un complexe
$$
\xymatrix{\cdots\ar[r]& \mathcal{F}^{0}\ar[r]^{u}&
\mathcal{F}^{1}\ar[r]& \cdots}.
$$
avec $\mathcal{F}^{i}$ ($i=0,1$) des $\OO_G$-modules localement
libres de même rang. Or pour $L$ un faisceau inversible de degré
$0$ sur $X_{K}$, $\HH^{1}(X_{K},L)\neq 0$ si et seulement si
$L\simeq \OO_{X_K}$. Par suite, $\mathrm{det}(u)\neq 0$, et le
morphisme $u:\mathcal{F}^{0}\rightarrow \mathcal{F}^{1}$ est
injectif. Notons
$\mathcal{M}:=\mathrm{R}^{1}p_{G,\ast}\mathfrak{L}=\mathrm{coker}(u)$.
C'est donc un $\OO_G$-module de torsion, qui possède une
résolution de longueur $1$ par des $\OO_G$-modules localement
libres. En particulier, le $\OO_G$-module $\mathcal{M}$ est de
Cohen-Macaulay, à support $\mathrm{Supp}(\mathcal{M})\subset N\cup
\underline{G}$ (comme ensemble).

\subsubsection{}\label{longueur 0} Notons $\xi\in \underline{G}$ le point générique de
$\underline{G}$, et $\ell$ la longueur de $\OO_{G,\xi}$-module
$\mathcal{M}_{\xi}$. Alors $\ell=0$. Pour le montrer, on raisonne
par l'absurde. Supposons $\ell\geq 1$. Soit $\varepsilon\in G(S)$
une section de $G$, et notons
$\mathcal{L}_{\varepsilon}=(\mathrm{id}_{X}\times
\varepsilon)^{\ast}\mathfrak{L}$. En vertu de \ref{intersection},
le $\OO_K$-module $\HH^{1}(X,\mathcal{L}_{\varepsilon})\simeq
\varepsilon^{\ast}\mathcal{M}$ est de longueur $\geq \ell\geq 1$.
D'après \ref{cle}, ceci équivaut à dire que
$\mathcal{L}_{\varepsilon}|_{X_1}\simeq \mathcal{I}^{i}|_{X_1}$
avec $i$ un entier convenable. Il en résulte que le morphisme
surjectif de groupes
$$
r_1:G(S)\rightarrow \Pic^{\circ}(X_1), ~~~~\varepsilon\mapsto
\mathcal{L}_{\varepsilon}|_{X_1}
$$
est à image finie. Mais $\OO_K$ est à corps résiduel
algébriquement clos, en particulier, $\Pic^{\circ}(X_1)\simeq
\Pic^{\circ}_{X_1/k}(k)$ est un groupe \emph{infini}, d'où une
contradiction, et l'assertion en résulte aussitôt. On en déduit
que le support (comme ensemble) du $\OO_G$-module $\mathcal{M}$
est le fermé $N$ de $G$.

\subsubsection{} Commençons par comparer le premier cran des filtrations.
Puisque $X_1/S$ est définissable sur son point fermé, son foncteur
de Picard $\Pic^{\circ}_{X_1/S}$ l'est aussi. Donc, par
adjonction, le morphisme de foncteurs $r_1:G\rightarrow \PP_{[1]}$
correspond à un morphisme de groupes algébriques sur le point
fermé $s$ de $S$:
$$
\underline{r_1}:\underline{G}\rightarrow
\underline{\PP_{[1]}}=\Pic^{\circ}_{X_1/k},
$$
rendant le diagramme suivant commutatif:
$$
\xymatrix{G\ar[r]\ar[d] & \Pic_{X/S}^{\circ}\ar[r] & \PP_{[1]}=i_{\ast}\underline{\PP_{[1]}}\\
i_{\ast}\underline{G}\ar@{.>}[rru]_<<<<<<<<<<<<<<{i_{\ast}\underline{r_1}}
& & }.
$$
Soit $x\in \underline{G}(k)$ un point fermé de $G$, et
$\varepsilon\in G(S)$ un relèvement de $x$. Notons
$\mathcal{L}_{\varepsilon}=(\mathrm{id}_{X}\times
\varepsilon)^{\ast}\mathfrak{L}$. C'est un faisceau inversible
rigidifié sur $X$. Alors $\varepsilon(s)\in N$ si et seulement si
le $\OO_K$-module $\HH^{1}(X,\mathcal{L}_{\varepsilon})$ est de
longeur $\geq 1$ (\ref{intersection}). De plus, cette dernière
condition, en vertu du lemme \ref{cle}, revient à dire que
$\mathcal{L}_{\varepsilon}|_{X_1}\simeq \mathcal{I}^{i}|_{X_1}$
avec $i$ un entier convenable. Notons $Z$ la fermeture schématique
des $x\in G(k)$ qui admettent un relèvement $\varepsilon$ tel que
$\mathcal{L}_{\varepsilon}|_{X_1}\simeq \OO_{X_1}$. Par continuité
et connexité, $Z$ est donc une réunion de composantes
irréductibles de $\underline{N}_{\mathrm{red}}$. Si l'on note
$G^{[1]}\rightarrow G$ la dilatation de $G$, de centre $Z\subset
\underline{G}$. Par définition de $Z$, on a une suite exacte de
$k$-schémas en groupes lisses:
$$
0\rightarrow Z\rightarrow \underline{G}\rightarrow
\Pic^{\circ}_{X_1/k}\rightarrow 0.
$$
D'où la suite exacte suivante, en vertu de la propriété
universelle des dilatations (3.2/1, \cite{BLR}):
\begin{equation}\label{exact de P}
0\rightarrow G^{[1]}(S)\rightarrow G(S)\rightarrow
\Pic^{\circ}(X_1)\rightarrow 0.
\end{equation}

\subsubsection{} Regardons ensuite le morphisme $\theta:G\rightarrow J$.
Notons $G^{[1]'}$ la dilatation de $G$ le long du centre
$\underline{N}_{\mathrm{red}}=\underline{\mathrm{ker}(\theta)}_{\mathrm{red}}\hookrightarrow
\underline{G}$. La propriété universelle des dilatations (3.2/1 de
\cite{BLR}) entraîne que la suite suivante est exacte
\begin{equation}\label{exact de J}
0\rightarrow G^{[1]'}(S)\rightarrow G(S)\rightarrow
J(S_1)\rightarrow 0.
\end{equation}
Comme $Z\subset \underline{N}_{\mathrm{red}}$ est un sous-groupe
ouvert, $G^{[1]}$ est un sous-groupe ouvert de $G^{[1]'}$. Compte
tenu des suites exactes (\ref{exact de P}) (\ref{exact de J}), on
obtient un morphisme de groupes $q_1:\Pic^{\circ}(X_1)\rightarrow
J(S_1)$, rendant le carré suivant commutatif:
$$
\xymatrix{\Pic^{\circ}(X)\ar[r]\ar[d]_{q}& \Pic^{\circ}(X_1)\ar[d]^{q_1}\\
J(S)\ar[r]& J(S_1)}
$$
Le morphisme de groupes abstraits $q_1$ est surjectif, de noyau
engendré par $\mathcal{I}|_{X_1}\in \Pic(X_1)$ (\ref{cle}).

\subsubsection{}\label{fidele} Passons au deuxième cran des filtrations.
Notons $\mathcal{M}^{[1]}$ (resp. $\mathcal{M}^{[1]'}$) l'image
réciproque de $\mathcal{M}$ sur $G^{[1]}$ (resp. sur $G^{[1]'}$)
via le morphisme $G^{[1]}\rightarrow G$ (resp. via le morphisme
$G^{[1]'}\rightarrow G$). Notons $N^{[1]}$ (resp. $N^{[1]'}$)
l'adhérence schématique de $N_{K}\hookrightarrow
G_{K}^{[1]}=G_{K}$ dans $G^{[1]}$ (resp. dans $G^{[1]'}$). Alors
$\mathcal{M}^{[1]}$ (resp. $\mathcal{M}^{[1]'}$) est un faisceau
cohérent de torsion à support dans $N^{[1]}\cup
\underline{G^{[1]}}$ (resp. dans $N^{[1]'}\cup
\underline{G^{[1]'}}$). De plus, d'après la propriété universelle
des dilatations (3.2/1 de \cite{BLR}), le morphisme composé
$G^{[1]}\rightarrow G\rightarrow J$ (resp. $G^{[1]'}\rightarrow
G\rightarrow J$) se factorise à travers $J^{[1]}\rightarrow J$. On
désigne par $\theta^{[1]}:G^{[1]}\rightarrow J^{[1]}$ (resp.
$\theta^{[1]'}:G^{[1]'}\rightarrow J^{[1]}$) le morphisme ainsi
obtenu.

\begin{lemma}\label{pf rec} Gardons les notations ci-dessus.

(i) Soit $\xi_1'$ un point générique de $\underline{G^{[1]'}}$,
alors le $\OO_{G^{[1]'},\xi_1'}$ module
$\mathcal{M}^{[1]'}_{\xi_1'}$ est de longeur $1$.

(ii) Le schéma $N$ est normal.

(iii) Le morphisme $\theta^{[1]}$ induit une surjection
$G^{[1]}(S)\rightarrow J^{[1]}(S)$. En particulier,
$\theta^{[1]}:G^{[1]}\rightarrow J^{[1]}$ est un morphisme
fidèlement plat, et $\ker(\theta^{[1]})=N^{[1]}$.
\end{lemma}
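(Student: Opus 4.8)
The plan is to reduce all three assertions to the intersection-theoretic inputs \ref{intersection} and \ref{lemme technique}, through the dictionary that for a section $\varepsilon\in G(S)$ one has $\varepsilon^{\ast}\mathcal{M}\simeq \HH^{1}(X,\mathcal{L}_{\varepsilon})$, whose length is governed by \ref{cle}. Everything rests on one preliminary: for a generic point $\eta$ of a component of $\underline{N}_{\mathrm{red}}$ and a \emph{general} closed point $x$ of its closure, a section $\varepsilon\in G(S)$ lifting $x$ (which exists since $S$ is strictement hensélien and $k=\overline{k}$) satisfies $\ell_{\OO_K}(\HH^{1}(X,\mathcal{L}_{\varepsilon}))=1$. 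Indeed $\varepsilon(s)\in N$ already forces this length to be $\geq 1$ (\ref{longueur 0}, \ref{cle}), whereas a length $\geq 2$ would, by \ref{cle}, force $\mathcal{L}_{\varepsilon}|_{X_{\psi(2)}}$ to be a power of $\mathcal{I}|_{X_{\psi(2)}}$ (see \ref{def de psi}); since the transition morphism $\Gr(\Pic^{\circ}_{X_{\psi(2)}/S})\to \Gr(\Pic^{\circ}_{X_{1}/S})$ has one-dimensional kernel (\ref{lemme en greenberg}), that last condition cuts out a \emph{proper} closed subset of $\underline{N}_{\mathrm{red}}$. Hence general $x$ give sections of length exactly $1$.

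For (i), lift such an $\varepsilon$ through the dilatation (\ref{dilatation}, \cite{BLR}) to $\varepsilon'\in G^{[1]'}(S)$ with $\varepsilon=\phi\circ\varepsilon'$, choosing $\varepsilon'(s)$ general in the component of $\underline{G^{[1]'}}$ lying over $\eta$; as the components of the $k$-groupe $\underline{G^{[1]'}}$ are disjoint, I may restrict to an open $Z\subset G^{[1]'}$ whose special fibre is exactly that component, so that the hypotheses of \ref{intersection} hold with $\xi_{1}'$ its generic point. Since the length $\ell(\varepsilon^{\ast}\mathcal{M})$ is finite, $\HH^{1}(X_{K},\mathcal{L}_{\varepsilon,K})=0$, so $\varepsilon_{K}\notin N_{K}$ and $\varepsilon'(S)\nsubseteq \mathrm{Supp}(\mathcal{M}^{[1]'})$; then \ref{intersection}(1) gives $\ell(\mathcal{M}^{[1]'}_{\xi_{1}'})\leq \ell(\varepsilon'^{\ast}\mathcal{M}^{[1]'})=\ell(\varepsilon^{\ast}\mathcal{M})=1$. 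Conversely $\xi_{1}'$ maps to $\eta\in N=\mathrm{Supp}(\mathcal{M})$, whence $\ell(\mathcal{M}^{[1]'}_{\xi_{1}'})\geq 1$, and (i) follows.

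For (ii), apply instead \ref{intersection}(2) on $G$ itself, to $\mathcal{M}$, the same section $\varepsilon$, and $\ell=0$ (the generic length of $\mathcal{M}$ on $\underline{G}$, by \ref{longueur 0}): the hypothesis $\ell(\varepsilon^{\ast}\mathcal{M})=\ell+1$ is met, and conclusion (b) gives that $H=N$ is regular at $x=\varepsilon(s)$. Letting $x$ vary over general closed points of each component of $\underline{N}_{\mathrm{red}}$ shows $N$ regular at every codimension-one point of its special fibre, i.e. $N$ satisfies $R_{1}$. Its generic fibre $N_{K}=\ker(r_{K})$ is un tore, so $N$ is reduced ($R_{0}$); and being a reduced diviseur in the regular scheme $G$ it is locally principal, hence de Cohen--Macaulay, so $S_{2}$. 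By Serre's criterion $N$ is normal.

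For (iii), note first that $\theta^{[1]}$ tue $N^{[1]}$ (the composite vanishes on the dense generic fibre while $N^{[1]}$ is $S$-plat and $J^{[1]}$ séparé), and that once $\theta^{[1]}$ is known flat its kernel is flat, hence equal to the schematic closure $N^{[1]}$ of $N_{K}$. Faithful flatness I check fibrewise: on generic fibres $\theta^{[1]}_{K}=\theta_{K}$ is faithfully flat; on special fibres $\underline{J^{[1]}}\simeq \mathrm{Lie}(\underline{J})\simeq \mathbb{G}_{a}$ (the dilatation being at the identity) and $\underline{G^{[1]}}$ is a smooth group, so it suffices that $\underline{\theta^{[1]}}$ have nonzero differential at the origin, for then its image is a positive-dimensional, hence all of, $\mathbb{G}_{a}$, and $\underline{\theta^{[1]}}$ is faithfully flat. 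The fibrewise criterion then makes $\theta^{[1]}$ flat and surjective, so smooth (a flat morphism of smooth $S$-schemes with smooth fibres), and smoothness over the strictement hensélien $S$ yields the surjection $G^{[1]}(S)\to J^{[1]}(S)$. The one genuinely hard point is this nonvanishing of $\underline{\theta^{[1]}}$: it cannot be extracted from the formal snake-lemma comparison of (\ref{exact de P}), (\ref{exact de J}) and (\ref{iso}) alone, and is precisely what (i) supplies — the equality $\ell(\mathcal{M}^{[1]'}_{\xi_{1}'})=1$ says that after one dilatation the kernel $N^{[1]'}$ is transverse to the special fibre, so the first-order part of $\theta$ along $\underline{N}_{\mathrm{red}}$ does not degenerate and $\underline{\theta^{[1]}}\neq 0$. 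Ruling out this wild (inséparable) degeneration is the main obstacle of the proof.
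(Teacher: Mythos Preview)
Your argument contains a genuine gap in the preliminary step that underlies both (i) and (ii). You assert that the condition ``$\ell(\HH^{1}(X,\mathcal{L}_{\varepsilon}))\geq 2$'', which by \ref{cle} forces $\mathcal{L}_{\varepsilon}|_{X_{\psi(2)}}\in\{\mathcal{I}^{i}|_{X_{\psi(2)}}\}$, ``cuts out a proper closed subset of $\underline{N}_{\mathrm{red}}$''. But this condition is on the \emph{section} $\varepsilon$, not on the closed point $x=\varepsilon(s)$: the class $\mathcal{L}_{\varepsilon}|_{X_{\psi(2)}}$ is not determined by $x$ alone, and there is no map $\underline{N}_{\mathrm{red}}\to\Gr(\PP_{[\psi(2)]})$ through which to pull back the finite set $\{\mathcal{I}^{i}\}$. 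The one-dimensionality of the kernel of $\Gr(\PP_{[\psi(2)]})\to\Gr(\PP_{[1]})$ does not by itself produce such a map, nor show that the image of the relevant sections in $\Pic^{\circ}(X_{\psi(2)})$ is infinite. In fact, establishing that such a factorisation $\underline{G^{[1]}}\to\Gr(\PP_{[\psi(2)]})$ exists is done in the paper \emph{after} the lemma, using (i) as input --- so invoking it here is circular. The paper closes this gap differently: from the diagram comparing (\ref{exact de P}) with the sequence $0\to\PP^{[1,\psi(2)]}(S)\to\PP_{[\psi(2)]}(S)\to\PP_{[1]}(S)\to 0$ one sees directly that $G^{[1]}(S)\twoheadrightarrow\PP^{[1,\psi(2)]}(S)$, and the target, being a $k$-line, is infinite; hence the image of $G^{[1]'}(S)$ in $\Pic^{\circ}(X_{\psi(2)})$ is infinite, contradicting $\ell_{1}\geq 2$ (which would force it into the finite set $\{\mathcal{I}^{i}\}$).

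Your treatment of (iii) also diverges from the paper and leaves its key step unjustified. You reduce to $\underline{\theta^{[1]}}\neq 0$ and claim this is ``precisely what (i) supplies'', but the link from length $1$ of $\mathcal{M}^{[1]'}$ at $\xi_{1}'$ to nonvanishing of $\underline{\theta^{[1]}}$ is only asserted heuristically (``transversality \ldots\ first-order part does not degenerate''). The obstruction is that $\ker(\theta^{[1]})$ need not equal $N^{[1]}$ before flatness is known, so controlling $\underline{N^{[1]}}$ does not control $\ker(\underline{\theta^{[1]}})$. The paper's proof of (iii) is independent of (i) and much shorter: since $G(S)\to J(S)$ is onto (\ref{Brauer nul}), the universal property of dilatations gives $G^{[1]'}(S)\twoheadrightarrow J^{[1]}(S)$; as $G^{[1]}\subset G^{[1]'}$ is open with $G^{[1]'}(S)/G^{[1]}(S)$ finite, \cite{BLR} 9.2/6 yields $G^{[1]}(S)\twoheadrightarrow J^{[1]}(S)$, whence faithful flatness and $\ker(\theta^{[1]})=N^{[1]}$.
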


\begin{proof} Remarquons d'abord que l'on a un diagramme
commutatif à lignes exactes:
$$
\xymatrix{0\ar[r]& G^{[1]}(S)\ar[r]\ar[d]& G(S)\ar[r]\ar[d] &
\Pic^{\circ}(X_1)\ar[r]\ar@{=}[d] & 0 \\ 0\ar[r] &
\PP^{[1,\psi(2)]}(S)\ar[r]& \PP_{[\psi(2)]}(S)\ar[r]&
\PP_{[1]}(S)\ar[r]& 0 }.
$$
On en déduit que le morphisme $G^{[1]}(S)\rightarrow
\PP^{[1,\psi(2)]}(S)$ est surjectif, car le morphisme
$G(S)\rightarrow \PP_{[\psi(2)]}(S)$ l'est. De plus, d'après
\ref{Oort}, le groupe $\PP^{[1,\psi(2)]}(S)$ est un $\OO_K$-module
de longueur $1$. Donc il est un groupe infini. Par suite, le
morphisme composé $ G^{[1]'}(S)\rightarrow G(S)\rightarrow
\PP_{[\psi(2)]}(S)=\Pic^{\circ}({X_{\psi(2)}}) $ est à image
\emph{infinie} (rappelons que $G^{[1]}$ est un sous-groupe ouvert
de $G^{[1]'}$). Pour montrer (i), on raisonne par l'absurde.
Notons $\ell_1$ la longueur du $\OO_{G^{[1]'},\xi_1'}$-module
$\mathcal{M}^{[1]'}_{\xi_1'}$. Par définition de $G^{[1]'}$, on a
$\ell_1\geq 1$. Supposons $\ell_1\geq 2$. Soit $\varepsilon\in
G^{[1]'}(S)$ une section de $G^{[1]'}$, avec
$\mathcal{L}_{\varepsilon}$ le faisceau inversible rigidifié sur
$X$ associé, telle que $\varepsilon(s)\in
\overline{\{\xi_1'\}}\subset \underline{G^{[1]'}}$. En vertu de la
proposition \ref{intersection}, le $\OO_K$-module
$\varepsilon^{\ast}\mathcal{M}^{[1]'}\simeq
\HH^{1}(X,\mathcal{L}_{\varepsilon})$ est de longueur $\geq
\ell_1\geq 2$. Donc, d'après \ref{cle}, on a
$\mathcal{L}_{\varepsilon}|_{X_{\psi(2)}}\simeq
\mathcal{I}^{i}|_{X_{\psi(2)}}$ avec $i$ un entier convenable.
Comme $G^{[1]'}/S$ est un $S$-schéma en groupes lisse de type
fini, on en déduit que le morphisme $G^{[1]'}(S)\rightarrow
\PP_{[\psi(2)]}(S)=\Pic^{\circ}(X_{\psi(2)})$ est à image finie.
D'où une contradiction, et ceci achève la preuve de (i).
L'assertion (ii) est un corollaire de (i), en vertu de
\ref{intersection} (2). Pour démontrer (iii), rappelons que le
morphisme composé $G(S)\rightarrow \Pic^{\circ}(X)\rightarrow
J(S)$ est surjectif (\ref{Brauer nul}). Puisque $G^{[1]'}$ est la
dilatation de $G$ le long de $\underline{N}_{\mathrm{red}}$, la
surjectivité de la dernière flèche implique la surjectivité de
$G^{[1]'}(S)\rightarrow J^{[1]}(S)$. Puisque $G^{[1]}\subset
G^{[1]'}$ est un sous-groupe ouvert, à fibre spéciale non vide, le
groupe abstrait $G^{[1]'}(S)/G^{[1]}(S)$ est un groupe fini. Par
suite, en vertu de 9.2/6 de \cite{BLR}, $\theta^{[1]}$ induit
aussi une surjection $G^{[1]}(S)\rightarrow J^{[1]}(S)$. En
particulier, $\theta^{[1]}$ est un morphisme fidèlement plat, et
on a $\ker(\theta^{[1]})=N^{[1]}$.
\end{proof}

\subsubsection{} Considérons le morphisme composé
$$
\xymatrix{\beta_{2}:G^{[1]}\ar[rr]^{r^{[1]}}(S) &  &
\PP^{[1]}(S)\ar[rr] &  & \PP^{[1,\psi(2)]}(S).}
$$
qui est surjectif. Puisque $\psi(2)\!\!\leq \!\! 2d$, le morphisme
de foncteurs $G^{[1]}\rightarrow \PP_{[\psi(2)]}$ définit un
morphisme de $k$-schémas en groupes
$\mathrm{Gr}_{2}(G^{[1]})\rightarrow
\mathrm{Gr}(\PP_{[\psi(2)]})$. Montrons d'abord que ce dernier
morphisme se factorise à travers le morphisme canonique de
$k$-groupes algébriques $\mathrm{Gr}_2(G^{[1]})\rightarrow
\underline{G^{[1]}}=\Gr_1(G^{[1]})$. Puisque le morphisme
$\Gr_2(G^{[1]})\rightarrow \underline{G^{[1]}}$ est à noyau lisse
sur $k$, il suffit de le vérifier au niveau des $k$-points
rationnels. Soit $\varepsilon \in G^{[1]}(S)$, avec
$\mathcal{L}_{\varepsilon}$ le faisceau inversible (rigidifié) sur
$X$ correspondant, tel que $\varepsilon(s)$ est l'élément neutre
de $\underline{G^{[1]}}$. En vertu de \ref{pf rec} (1) et de
\ref{intersection}, le $\OO_{K}$-module
$\varepsilon^{\ast}\mathcal{M}^{[1]}\simeq
\HH^{1}(X,\mathcal{L}_{\varepsilon})$ est de longueur $>1$. Donc,
par \ref{cle}, on a
$\mathcal{L}_{\varepsilon}|_{X_{\psi(2)}}\simeq
\mathcal{I}^{i}|_{X_{\psi(2)}}$ pour $i$ un entier convenable.
D'autre part, comme le noyau du morphisme (de $k$-groupes
algébriques) $\Gr_2(G^{[1]})\rightarrow \underline{G^{[1]}}$ est
connexe (\ref{Greenberg lisse}), on a forcément
$\mathcal{L}_{\varepsilon}|_{X_{\psi(2)}}\simeq
\OO_{X_{\psi(2)}}$. D'où l'assertion, et on désigne par
$\underline{\beta_2}:\underline{G}^{[1]}\rightarrow
\Gr(\PP_{[\psi(2)]})$ le morphisme de $k$-groupes algébriques
ainsi obtenu. Notons ensuite
$Z:=\ker(\underline{\beta_2})_{\mathrm{red}}\hookrightarrow
\underline{G^{[1]}}$. Alors  le même raisonnement implique que $Z$
est une réunion de composantes connexes de
$\underline{N^{[1]}}_{\mathrm{red}}$.

\subsubsection{}\label{preuve normal} Définissons $G^{[2]}$ (resp.
$G^{[2]'}$) comme la dilatation de $G^{[1]}$ le long du
sous-groupe fermé lisse $Z$ de $\underline{G^{[1]}}$ (resp. le
long de $\underline{N^{[1]}}_{\mathrm{red}}\hookrightarrow
\underline{G^{[1]}}$), et notons $\alpha^{[1]}$ le morphisme
composé $G^{[2]}\rightarrow G^{[1]}\rightarrow G$. D'après 3.2/3
de \cite{BLR}, $G^{[2]}$ est un $S$-schéma en groupes lisse, et on
a une suite exacte:
\begin{equation}\label{s}
0\rightarrow G^{[2]}(S)\rightarrow G^{[1]}(S)\rightarrow
\PP^{[1,\psi(2)]}(S)\rightarrow 0.
\end{equation}
D'autre part, on a une suite exacte de groupes abstraits
$$
0\rightarrow G^{[2]'}(S)\rightarrow G^{[1]}(S)\rightarrow
J^{[1]}(S_1)\rightarrow 0.
$$
Comme $Z\subset \underline{N^{[1]}}$ est un sous-schéma en groupes
ouvert, $G^{[2]}\subset G^{[2]'}$ est un sous-groupe ouvert. On
obtient donc un morphisme $\PP^{[1,\psi(2)]}(S)\rightarrow
J^{[1]}(S_1)$ rendant le diagramme suivant commutatif:
$$
\xymatrix{G^{[1]}(S)\ar[r]\ar[rd]& \PP^{[1,\psi(2)]}(S)\ar[d] \\
& J^{[1]}(S_1)}
$$
Comme tenu de l'isomorphisme (\ref{iso}), on obtient un morphisme
canonique de groupes abstraits
$$ \PP^{[1,\psi(2)]}(S)\rightarrow
\ker(J(S_{2})\rightarrow J(S_{1})).
$$
En particulier, le morphisme canonique de groupes
$\Pic^{\circ}(X)\rightarrow J(S_2)$ se factorise à travers
$\PP_{[\psi(2)]}(S)=\Pic^{\circ}(X_{\psi(2)})$:
$$
\xymatrix{\Pic^{\circ}(X)\ar[r]\ar[d]_{q}& \Pic^{\circ}(X_{\psi(2)})\ar[d]^{q_2}\\
J(S)\ar[r]& J(S_2)}.
$$
De plus, d'après \ref{cle}, le noyau du morphisme surjectif de
groupes $q_2:\Pic^{\circ}(X_{\psi(2)})\rightarrow J(S_2)$ est
engendré par $\mathcal{I}|_{X_{\psi(2)}}\in
\Pic^{\circ}(X_{\psi(2)})$.

\subsubsection{} Le cas général se fait par récurrence sur $n$, en
appliquant la même procédure (\ref{fidele}-\ref{preuve normal}).
Finalement, on obtient le théorème suivant:

\begin{theorem}\label{resultat final}Gardons les notations ci-dessus. Alors,
pour chaque $n\geq 1$, le morphisme de foncteurs
$q:\Pic^{\circ}_{X/S}\rightarrow J$ induit un morphisme de
$k$-schéma en groupes $q_n:\mathrm{Gr}({P}_{\psi(n)})\rightarrow
\mathrm{Gr}_n(J)$, qui est une isogénie. De plus,
$\ker(q_n)(k)=\{\mathcal{I}^{i}|_{X_{\psi(n)}}: ~i\in
\mathbf{Z}~\}\subset \Gr(\PP_{[\psi(n)]})(k)\simeq
\Pic^{\circ}(X_{\psi(n)})$.
\end{theorem}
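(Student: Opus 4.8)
The plan is to prove the theorem by induction on $n$, the inductive step being exactly the procedure carried out in \ref{fidele}--\ref{preuve normal} for the passage from the first to the second cran of the filtrations. I keep as inductive invariant at level $n-1\geq 1$ the data of smooth $S$-group schemes $G^{[n-1]}\subseteq G^{[n-1]'}$, with $G^{[n-1]}$ an \emph{open} subgroup of $G^{[n-1]'}$ of nonempty special fibre, fitting into exact sequences of abstract groups $0\to G^{[n-1]}(S)\to G(S)\to \Pic^{\circ}(X_{\psi(n-1)})\to 0$ and $0\to G^{[n-1]'}(S)\to G(S)\to J(S_{n-1})\to 0$, together with a morphism $\theta^{[n-1]}:G^{[n-1]}\to J^{[n-1]}$ which is faithfully flat with kernel $N^{[n-1]}=\overline{N_K}$, and such that the pullback $\mathcal{M}^{[n-1]'}$ of $\mathcal{M}=\R^{1}p_{G,\ast}\mathfrak{L}$ has length $1$ at every generic point of $\underline{G^{[n-1]'}}$ (hence also on the open $G^{[n-1]}$). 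For $n-1=1$ this is precisely \ref{exact de P}, \ref{exact de J} and Lemma \ref{pf rec}.

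For the inductive step (from $n-1$ to $n$, $n\geq 2$) I would first form the surjection $\beta_n:G^{[n-1]}(S)\to \PP^{[\psi(n-1),\psi(n)]}(S)$ obtained by composing $r^{[n-1]}$ with the projection, and show that the associated morphism of $k$-group schemes $\Gr_n(G^{[n-1]})\to \Gr(\PP_{[\psi(n)]})$ factors through $\underline{G^{[n-1]}}=\Gr_1(G^{[n-1]})$. Since the kernel of $\Gr_n(G^{[n-1]})\to \underline{G^{[n-1]}}$ is connected (\ref{Greenberg lisse}), it suffices to check this on $k$-points: for $\varepsilon\in G^{[n-1]}(S)$ with $\varepsilon(s)$ the neutral element — which lies on the horizontal component $N^{[n-1]}=\overline{N_K}$, so that the support of $\mathcal{M}^{[n-1]}$ at $\varepsilon(s)$ is not contained in $\underline{G^{[n-1]}}$ — the strict-inequality case of Proposition \ref{intersection}(1) combined with the length-$1$ invariant forces $\HH^{1}(X,\mathcal{L}_\varepsilon)\simeq \varepsilon^{\ast}\mathcal{M}^{[n-1]}$ to have length $\geq 2$; by \ref{cle} this gives $\mathcal{L}_\varepsilon|_{X_{\psi(n)}}\simeq \mathcal{I}^{i}|_{X_{\psi(n)}}$, and connectedness of the kernel pins $i$ so that the restriction is trivial. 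Setting $Z=\ker(\underline{\beta_n})_{\mathrm{red}}$, a union of connected components of $\underline{N^{[n-1]}}_{\mathrm{red}}$, I define $G^{[n]}$ (resp. $G^{[n]'}$) as the dilatation of $G^{[n-1]}$ along $Z$ (resp. along $\underline{N^{[n-1]}}_{\mathrm{red}}$); as $Z$ is open in $\underline{N^{[n-1]}}_{\mathrm{red}}$, $G^{[n]}$ is an open subgroup of $G^{[n]'}$.

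The universal property of dilatations (\cite{BLR} 3.2/1) then yields $0\to G^{[n]}(S)\to G^{[n-1]}(S)\to \PP^{[\psi(n-1),\psi(n)]}(S)\to 0$ and $0\to G^{[n]'}(S)\to G^{[n-1]}(S)\to J^{[n-1]}(S_1)\to 0$; splicing these with the inductive exact sequences and with the isomorphism $J^{[n-1]}(S_1)\simeq \ker(J(S_{n})\to J(S_{n-1}))$ of (\ref{iso}) produces the commutative square relating $\Pic^{\circ}(X)\to \Pic^{\circ}(X_{\psi(n)})$ to $J(S)\to J(S_n)$, hence the abstract morphism $q_n$ and the level-$n$ analogues of the two invariant exact sequences. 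The hard part, which I expect to be the crux, is re-establishing the length invariant, i.e.\ reproving \ref{pf rec}(i) at level $n$: here $\PP^{[\psi(n-1),\psi(n)]}(S)$ is an $\OO_K$-module of length $1$ by \ref{Oort}, hence an \emph{infinite} group, so the image of $G^{[n]'}(S)$ in $\Pic^{\circ}(X_{\psi(n+1)})$ is infinite; were $\mathcal{M}^{[n]'}$ of length $\geq 2$ at a generic point of $\underline{G^{[n]'}}$, Proposition \ref{intersection} and \ref{cle} would make that image finite, a contradiction. Part (2) of \ref{intersection} then gives the normality of $N^{[n]}$, and the surjectivity of $G(S)\to J(S)$ (\ref{Brauer nul}) propagates through the dilatations — using \cite{BLR} 9.2/6 to pass from the open subgroup $G^{[n]}$ to $G^{[n]'}$ — to show $\theta^{[n]}$ is faithfully flat with kernel $N^{[n]}$, closing the induction.

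Finally I would assemble the Greenberg realisations. By \ref{Lipman} the source $\Gr(\PP_{[\psi(n)]})$ is smooth \emph{connected}, and $\Gr_n(J)$ is smooth connected as well; on $k$-points $q_n$ is the surjection $\Pic^{\circ}(X_{\psi(n)})\to J(S_n)$, so $q_n$ is a surjective homomorphism of smooth connected commutative $k$-group schemes. Its kernel is therefore $0$-dimensional, i.e.\ finite, so $q_n$ is an isogeny (in particular both sides have dimension $\phi(\psi(n))=n=n\cdot\dim J_K$). The kernel on $k$-points is the image of $G^{[n]'}(S)/G^{[n]}(S)$ in $\Pic^{\circ}(X_{\psi(n)})$, namely the classes $\mathcal{L}_\varepsilon|_{X_{\psi(n)}}$ for $\varepsilon\in G^{[n]'}(S)$, which by \ref{cle} are exactly $\{\mathcal{I}^{i}|_{X_{\psi(n)}}:i\in\Z\}$, as asserted. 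Everything beyond the length control of $\mathcal{M}^{[n]'}$ is bookkeeping with the universal property of dilatations and the isomorphism (\ref{iso}).
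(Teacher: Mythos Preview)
Your proposal is correct and follows exactly the approach of the paper, which simply states that the general case is done by induction on $n$ via the same procedure as in \ref{fidele}--\ref{preuve normal}; you have accurately reconstructed the inductive invariant (the open inclusion $G^{[n-1]}\subset G^{[n-1]'}$, the two exact sequences on $S$-points, the faithful flatness of $\theta^{[n-1]}$, and the length-$1$ control on $\mathcal{M}^{[n-1]'}$) and the mechanism of the step (strict inequality in \ref{intersection}(1) at points of $\underline{N^{[n-1]}}$, then \ref{cle}, then dilatation along $Z$ versus along $\underline{N^{[n-1]}}_{\mathrm{red}}$). The only point left implicit is that $\psi(n)\leq nd$ (needed so that $X_{\psi(n)}$ lives over $S_n$ and the map $\Gr_n(G^{[n-1]})\to \Gr(\PP_{[\psi(n)]})$ makes sense), but this is immediate from $d_i\leq d$.
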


\begin{remark} Posons $G^{[0]'}=G$, $\theta^{[0]'}=\theta:G\rightarrow J$.
Défnissons par récurrence, pour chaque entier $n\geq 0$, un
$S$-schéma en groupes lisse $G^{[n]'}$, et un morphisme fidèlement
plat de $S$-schémas en groupes $\theta^{[n]'}:G^{[n]'}\rightarrow
J^{[n]}$ de la manière suivante: supposons $n\geq 1$ et que l'on a
construit $G^{[n-1]'}$ et $\theta^{[n-1]'}:G^{[n-1]'}\rightarrow
J^{[n-1]}$. Puisque $\theta^{[n-1]'}$ est fidèlement plat, son
noyau $N^{[n-1]'}=\ker(\theta^{[n-1]'})$ est un $S$-schéma en
groupes plat sur $S$. Puis on définit $G^{[n]'}$ comme la
dilatation de $G^{[n-1]'}$ le long de
$\underline{N^{[n-1]'}}_{\mathrm{red}}\subset
\underline{G^{[n-1]'}}$. D'après \ref{dilatation}, $G^{[n]'}$ est
un $S$-schéma en groupes lisse. En vertu de la propriété
universelle des dilatations (\cite{BLR} 3.2/1), le morphisme
$\theta^{[n-1]'}$ induit un morphisme de $S$-schéma en groupes
$\theta^{[n]'}:G^{[n]'}\rightarrow J^{[n]}$, qui est aussi
fidèlement plat (\ref{pf rec} (iii)). Ceci finit la construction.
Notons $N^{[n]'}=\ker(\theta^{[n]'})$. Alors, la preuve du
théorème \ref{resultat final} (notamment \ref{pf rec} (ii)) montre
aussi que le schéma $N^{[n]'}$ est \emph{normal}. De plus, on peut
vérifier que le schéma $N^{[n]'}$ est lisse sur $S$ pour les
entiers $n$ assez grands.
\end{remark}

\begin{acknowledgements}Je suis très reconnaissant à M. Raynaud, qui m'a proposé ce
sujet. Il m'a donné aussi ses notes non-publiées
(\cite{Raynaud1}). Cet article a été préparé lors de mes séjours
comme post-doc à l'Université Duisburg-Essen. Je tiens à remercier
Hélène Esnault pour son hospitalité, et ses encouragements. Je
voudrais aussi exprimer ma reconnaissance à mes collegues de
Essen, pour les conversations intéressantes et pour leur aide dans
la vie quotidienne pendant mes séjours en Allemagne. Enfin,
j'aimerais remercier Wenqing, pour son soutien constant.
\end{acknowledgements}

\addcontentsline{toc}{section}{Bibliographie}
\bibliographystyle{alpha}

 \end{document}